\documentclass[11pt, dvipsnames]{amsart}
\usepackage{amsthm}
\usepackage{amssymb}
\usepackage{amsmath}

\usepackage[english]{babel}
\usepackage{tikz}
\usepackage[shortlabels]{enumitem}
\usepackage{url}
\usepackage{hyperref}

\theoremstyle{plain}
\newtheorem{thm}{Theorem}[section]
\newtheorem{prop}[thm]{Proposition}
\newtheorem{lemma}[thm]{Lemma}
\newtheorem{cor}[thm]{Corollary}
\newtheorem{mainthm}{Main Theorem}
\theoremstyle{definition}
\newtheorem{df}[thm]{Definition}
\newtheorem{remark}[thm]{Remark}
\newtheorem{example}[thm]{Example}

\def\Q{\mathbb{Q}}
\def\Z{\mathbb{Z}}
\newcommand{\Wk}{\mathcal{W}}
\DeclareMathOperator{\ICM}{ICM}
\DeclareMathOperator{\Pic}{Pic}
\DeclareMathOperator{\Hom}{Hom}
\DeclareMathOperator{\End}{End}
\DeclareMathOperator{\Ext}{Ext}
\DeclareMathOperator{\Tr}{Tr}

\DeclareMathOperator{\tr}{tr}

\newcommand{\frf}{{\mathfrak f}}
\newcommand{\frm}{{\mathfrak m}}
\newcommand{\p}{{\mathfrak p}}

\newcommand{\frP}{{\mathfrak P}}
\newcommand{\cO}{{\mathcal O}}
\newcommand{\cQ}{{\mathcal Q}}
\renewcommand{\bar}{\overline}
\newcommand{\vphi}{{\varphi}}
\newcommand{\set}[1]{\left\lbrace#1\right\rbrace }
\newcommand{\gensover}[2]{g_{#1}(#2)} 
\newcommand{\gens}[1]{g(#1)} 
\newcommand{\typep}[1]{\mathrm{type}_{\p}({#1})} 
\newcommand{\type}[1]{\mathrm{type}({#1})} 
\newcommand{\typeover}[2]{\mathrm{type}_{#1}({#2})} 

\usepackage{color}

\title{Cohen-Macaulay type of orders, generators and ideal classes}
\date{\today}

\author{Stefano Marseglia}
\address{Mathematical Institute, Utrecht University, P.O. Box 80010, 3508 TA, Utrecht, The Netherlands}
\email{s.marseglia@uu.nl}

\keywords{orders, Cohen-Macaulay type, ideal classes, generators}

\subjclass{
    16H10, 
    11Y40, 
    13H10, 
    15B36, 
    14K15.  
}

\begin{document}

\begin{abstract}
    In this paper we study the (Cohen-Macaulay) type of orders over Dedekind domains in \'etale algebras.
    We provide a bound for the type, and give formulas to compute it.
    We relate the type of the overorders of a given order to the size of minimal generating sets of its fractional ideals, generalizing known results for Gorenstein and Bass orders.
    Finally, we give a classification of the ideal classes with multiplicator ring of type $2$, with applications to the computations of the conjugacy classes of integral matrices and the isomorphism classes of abelian varieties over finite fields.
\end{abstract}

\maketitle
\section{Introduction}\
Cohen-Macaulay rings play a very important role in algebraic geometry.
The associated invariants, such as the (Cohen-Macaulay) type, have been widely studied in the literature.
On the other hand, it seems that a large class of Cohen-Macaulay rings coming from number theory, namely orders in number fields, has been so far widely neglected.
In this paper we work on filling this gap, that is, we study orders in terms of their type.

The typical example of an order is the ring of integers $\cO_K$ of a number field $K$.
The order $\cO_K$ is a central object of study for number theorists since the work of Dedekind and Noether.
For example, we know that all the completions of $\cO_K$ at its maximal ideals have type $1$.
In fact, having (local) type $1$ characterizes a larger class of Cohen-Macaulay rings, the so-called Gorenstein rings.
The study of Gorenstein rings started with the seminal `ubiquity' paper by Bass \cite{Bass63}.
Gorenstein orders in $K$ share several properties with $\cO_K$.
For example, every fractional ideal with Gorenstein multiplicator ring $S$ is invertible, or, equivalently, locally principal.
On the other hand, much less is known about non-Gorenstein orders, that is, when there are maximal ideals for which the local type is strictly greater than $1$.

We will now overview the structure of the paper and highlight the main results.
We will work in a more general context than orders in number fields.
We refer the reader to Section \ref{sec:prelim} for details and missing definitions.
Let $Z$ be a Dedekind domain with field of fractions $Q$.
Let $K$ be an \'etale algebra over~$Q$.
Let $S$ be an order in $K$.
In analogy with the ring of rational integers~$\Z$, the maximal ideals of $S$ will be called primes.
Let $\p$ be such a prime.
In Section \ref{sec:CMtypeorder}, we will study the type of $S$ at $\p$, which is defined as the type of the completion $\hat S_\p$ of $S$ at $\p$, that is,
\[ \typep{S}=\dim_{k}\Ext_{\hat S_\p}^{1}(k,\hat S_\p),\]
where $k$ is the residue field of $\hat S_\p$.
We will also use a global notion of type by setting
\[ \type{S} = \max_\p \set{\typep{S}},\]
where $\p$ runs over the primes of $S$.
This is finite quantity because the local types are almost always $1$.
See Proposition~\ref{prop:type_at_inv_p}.
We will produce formulas to compute these types in terms of the trace dual $S^t$ of $S$.
In fact, the presence of such a non-degenerate trace will be crucial for our investigations in the rest of the paper.

The first main results of the paper about the type are summarized here in Main Theorem \ref{thm:main1}.
See Proposition \ref{prop:can_mod_and_type}, Corollary \ref{cor:typeasmax} and Proposition \ref{prop:type_bound} for the complete statements.
\begin{mainthm}\label{thm:main1}
    Let $S$ be an order in $K$. Then
    \begin{itemize}
        \item We have
        \[ \typep{S} = \dim_{S/\p} \dfrac{S^t}{\p S^t}.\]
        \item If $S\neq \cO_K$ then
        \[ \type{S} +1 = \max\set{ \dim_{S/\p}\dfrac{(\p:\p)}{\p} : \p\text{ is a prime of }S }. \]
        \item Assume that $\dim_Q(K)>1$.
        Then the type of an order in $K$ is bounded by $\dim_Q(K)-1$.
        Moreover, this bound is attained: if $p$ is a prime of $Z$ then the order $T=Z+p \cO_K$ satisfies~$\type{T}=\dim_Q(K)-1$.
    \end{itemize}
\end{mainthm}

In Section \ref{sec:typegens}, we study the type of an order $S$ together with the type of all of its overorders $T$, that is, orders such that $S\subseteq T \subseteq \cO_K$.
We relate these numbers with the quantity $\gens{S}$, which is defined as the maximum, over the fractional $S$-ideals $I$, of the size $\gensover{S}{I}$ of a minimal generating set of $I$ as an $S$-module.
The main result of the section is Theorem \ref{thm:equiv_conds} and it is summarized here as Main Theorem \ref{thm:main2}.
We invite the reader to compare it to the case when $S$ is a non-maximal Bass order, that is, such that all the overorders are Gorenstein. See Proposition \ref{prop:Bass}.

\begin{mainthm}\label{thm:main2}
    Let $S$ be a non-maximal order, $\mathcal{S}$ be the set of overorders~$T$ of $S$
    and $d$ a positive integer.
    Then the following are equivalent:
    \begin{itemize}
        \item $1+\max_{T\in \mathcal{S}}\set{\type{T}} = d$.
        \item The $S$-module $\cO_K/S$ is minimally generated by $d-1$ elements.
        \item $g(S) = d$.
    \end{itemize}
\end{mainthm}

The motivation that originally started the investigation in this paper was to improve our understanding of the isomorphism classes of fractional ideals, and consequently, to improve the algorithms to compute them, which are described in \cite{MarsegliaICM}.
In Section \ref{sec:idealclasses} we recall such results, that where originally stated for $\Z$-orders in \'etale $\Q$-algebras, but that also apply to our more general setting.
The Magma \cite{Magma} implementation of such algorithms is available at \url{https://github.com/stmar89/AlgEt}.
As anticipated above, we completely understand the isomorphism classes of fractional ideals with Gorenstein multiplicator ring.
See Corollary \ref{cor:idlclGorBass}.
Also, as mentioned above, and stated precisely in Proposition \ref{prop:Gorenstein}, the type can be seen as a measure of how far an order is from being Gorenstein.
In Section~\ref{sec:dim2}, we focus our attention to the orders that, from this perspective, are close-to-being Gorenstein, that is, have type $2$.

For an order $S$ and a prime $\p$ with $\typep{S}=2$, we give locally at $\p$ a classification of the isomorphism classes of fractional $S$-ideals with multiplicator ring~$S$.
If we globally have $\type{S}=2$, then we can patch the local information to effectively produce representatives of the isomorphism classes of fractional $S$-ideals with multiplicator ring $S$.
The set of such classes is denoted $\ICM_S(S)$ in the following Main Theorem \ref{thm:main3}.
We refer the reader to Theorem \ref{thm:type2} and Corollary \ref{cor:idlclassestype2} for the complete statements.
\begin{mainthm}\label{thm:main3}
    Let $S$ be an order in $K$. Then
    \begin{itemize}
        \item Let $\p$ be a prime of $S$ and $I$ a fractional $S$-ideal with $(I:I)=S$.
        If $\typep{S}=2$ then either $I_\p \simeq S_\p$ or $I_\p \simeq (S^t)_\p$.
        \item Assume that $\type{S}=2$.
        Let $N$ be the number of primes $\p$ of $S$ such that $\typep{S}=2$.
        Then $\vert\ICM_S(S)\vert = 2^N\cdot \vert \Pic(S) \vert$.
    \end{itemize}
\end{mainthm}
In Example \ref{ex:appliations}, we show how to deduce information about $\Z$-conjugacy classes of integral matrices and abelian varieties over finite fields from a computation of isomorphism classes of fractional ideals.

In a forthcoming paper with Caleb Springer, we give two further applications of Main Theorem \ref{thm:main3} to abelian variaties over finite fields.
The first one is a condition that precludes an abelian variety from being self-dual, and hence principally polarized or the Jacobian of a curve.
The second one is a characterization of the group of rational points of the abelian variety in terms of its endomorphism ring.

Finally, in Section \ref{sec:comparison} we make comparisons with other notions of close-to-being Gorenstein occurring in the recent literature.

\subsection*{Acknowledgements}
The author is grateful to Jonas Bergstr\"om and Hendrik W.~Lenstra Jr.~for useful discussions and comments on a preliminary version of the paper.
The author thanks Carel Faber for useful comments.
The author was supported by NWO grant VI.Veni.202.107. 

\section{Preliminaries}\label{sec:prelim}
\subsection{Conventions and notation}\
In the rest of the paper the word ring will mean a non-zero commutative ring with unity.
We will not consider fields as Dedekind Domains.
In particular, Dedekind domains have Krull dimension $1$ and are infinite.

We denote by $\cQ(R)$ total ring of quotients of a ring $R$ and by $R^\times$ the units of $R$.
Given sub-$R$-modules $I$ and $J$ of $\cQ(R)$ we define $I+J$ and $IJ$ in the natural way, and the {\it colon} as
\[ (I:_{\cQ(R)}J) = \set{ x \in \cQ(R) \: \ xJ \subseteq I }. \]
If the total ring of quotient is clear from the context, we will omit it from the notation and write $(I:J)$ for $(I:_{\cQ(R)}J)$.
\begin{lemma}
    \label{lemma:homcolon}
    Let $I$ and $J$ be sub-$R$-modules of $\cQ(R)$ such that $J\cap \cQ(R)^\times$ is non-empty.
    Then $J\cQ(R)=\cQ(R)$ and the natural $R$-linear morphism
    \begin{align*}
        \vphi: (I:J) & \longrightarrow \Hom_R(J,I)\\
        x & \longmapsto (\vphi_x: j \mapsto xj)
    \end{align*}
    is an isomorphism.
\end{lemma}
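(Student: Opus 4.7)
The plan is to dispatch the first assertion and the formal parts of the second quickly, and then focus on the one real computation, which is surjectivity of $\vphi$. For the first assertion, picking any $u\in J\cap \cQ(R)^\times$ gives $1 = u\cdot u^{-1}\in J\cQ(R)$, whence $J\cQ(R)=\cQ(R)$. For $\vphi$ itself, well-definedness is immediate from the definition of $(I:J)$, as is $R$-linearity; for injectivity, if $\vphi_x=0$ then in particular $xu=0$, and since $u$ is a unit of $\cQ(R)$ we conclude $x=0$.

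For surjectivity, fix $f\in \Hom_R(J,I)$ and define $x := f(u)u^{-1}\in \cQ(R)$, where $u\in J\cap \cQ(R)^\times$ is the unit from above. The goal is to show $f(j)=xj$ for all $j\in J$; once this is established, $xJ = f(J)\subseteq I$, so $x\in (I:J)$ and $f=\vphi_x$. Equivalently, I need $u\,f(j) = j\,f(u)$ in $\cQ(R)$ for each $j\in J$. The difficulty is that $R$-linearity only gives $f(rj')=rf(j')$ for $r\in R$, and neither $u$ nor $j$ is assumed to lie in $R$, while $uj$ need not even lie in $J$, so we cannot just evaluate $f$ on the product.

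The key observation is that $\cQ(R)$ is the localization of $R$ at its non-zero-divisors, so there exists a non-zero-divisor $m\in R$ with both $mu$ and $mj$ lying in $R$. Now $R$-linearity of $f$ in the scalars $mu\in R$ and $mj\in R$ yields
\[ f\bigl((mu)\,j\bigr) = (mu)\,f(j) \quad\text{and}\quad f\bigl((mj)\,u\bigr) = (mj)\,f(u). \]
Since $(mu)j = (mj)u$ by commutativity of $\cQ(R)$, comparing the two gives $m\bigl(u\,f(j) - j\,f(u)\bigr)=0$. But $m$ is a non-zero-divisor in $R$, hence a unit in $\cQ(R)$, and we conclude $u\,f(j) = j\,f(u)$, as required.

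The only genuine obstacle is surjectivity, and within it the point that $R$-linearity alone cannot be applied to the non-$R$ elements $u$ and $j$. Once one spots that the right strategy is to clear denominators simultaneously, so that the multiplications used become multiplications by honest elements of $R$, the computation is essentially forced.
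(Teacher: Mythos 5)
Your proof is correct and follows essentially the same route as the paper's: both identify the preimage of $f$ as $x=f(u)u^{-1}$ for a unit $u\in J\cap\cQ(R)^\times$ and then check that $f$ is multiplication by $x$ on all of $J$. Your denominator-clearing computation of $uf(j)=jf(u)$ in fact makes explicit the verification that the paper compresses into the assertion that $\psi$ extends to a $\cQ(R)$-linear map $\tilde\psi$ with $\tilde\psi(jq)=\psi(j)q$, so your write-up is, if anything, slightly more self-contained on that point.
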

\begin{proof}
    Let $z\in J\cap \cQ(R)^\times$.
    Then $\cQ(R)=z\cQ(R) \subseteq J\cQ(R)$ and hence we have the equality $\cQ(R)=J\cQ(R)$.
    Assume that for $x\in (I:J)$ we have $\vphi_x=0$.
    Then $xz=0$ implies that $x=0$ and so $\vphi$ is injective.
    To prove the surjectivity we proceed as follows.
    For an $R$-linear morphism $\psi:J\to I$ we get an induced $\cQ(R)$-linear morphism $\tilde\psi:\cQ(R)\to I\cQ(R)$ by setting $\tilde\psi(jq)=\psi(j)q$ for every $j\in J$ and $q\in\cQ(R)$.
    Set $x=\tilde\psi(1)$.
    For every $j\in J$ we have
    \[ jx=j\tilde\psi(1) = \tilde\psi(j) = \psi(j)\in I \]
    which shows that $x\in (I:J)$ and that $\vphi_x=\psi$, completing the proof.
\end{proof}

We say that $I$ is {\it invertible} if $I(R:I)=R$,
and that $I$ is {\it principal} if there exists an element $x\in \cQ(R)^\times$ such that $I=xR$.
Note that if~$I$ is principal, then it is invertible.
The ring $(I:I)$ is called the {\it multiplicator ring} of $I$.
Observe that $R\subseteq (I:I)$.

A version of the following lemma can be found in \cite[Lemma~2.5]{MarsegliaICM} with more restrictive hypothesis, but the same proof.
\begin{lemma}\label{lemma:invmultring}
    Let $I$ be an invertible sub-$R$-module of $\cQ(R)$.
    Then the multiplicator ring $(I:I)$ of $I$ equals $R$.
\end{lemma}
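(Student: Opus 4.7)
The plan is very short: I will use the invertibility equation $I(R:I)=R$ to ``cancel'' $I$ from the inclusion defining the multiplicator ring. Since the inclusion $R\subseteq (I:I)$ is automatic (as noted in the excerpt just before the lemma), the whole content of the statement is the reverse inclusion $(I:I)\subseteq R$.

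To establish this, I would take an arbitrary $z\in(I:I)$, which by definition means $zI\subseteq I$ inside $\cQ(R)$. Multiplying this inclusion of sub-$R$-modules of $\cQ(R)$ on the right by $(R:I)$ yields
\[ z\cdot I\cdot (R:I)\subseteq I\cdot (R:I) = R,\]
where the final equality is precisely the invertibility hypothesis. Using associativity and commutativity of the product of sub-$R$-modules of $\cQ(R)$, the left-hand side equals $z\cdot\bigl(I(R:I)\bigr)=zR$. So $zR\subseteq R$, and evaluating at $1\in R$ gives $z\in R$, as desired.

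There is really no obstacle beyond making sure the symbol manipulations are legitimate. The only thing worth double-checking is that the product and colon operations on sub-$R$-modules of $\cQ(R)$, as defined at the start of Section~\ref{sec:prelim}, are associative and commutative and interact as above — but this is straightforward since all elements live in the commutative ring $\cQ(R)$. So the proof is essentially a one-line multiplication by $(R:I)$.
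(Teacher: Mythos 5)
Your proof is correct and is essentially the paper's own argument: the paper performs the same cancellation at the level of modules, computing $R(I:I)=(R:I)I(I:I)=(R:I)I=R$ and concluding $(I:I)\subseteq R$, while you phrase it element-wise by multiplying $zI\subseteq I$ through by $(R:I)$. No substantive difference, and the module-arithmetic steps you flag (associativity, commutativity, monotonicity of the product) are indeed immediate from the definitions in Section~\ref{sec:prelim}.
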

\begin{proof}
    Using the equality $R=I(R:I)$ we obtain
    \[ R(I:I)=(R:I)I(I:I)=(R:I)I=R. \]
    Therefore $(I:I) \subseteq R$, and hence $R=(I:I)$.
\end{proof}

\begin{df}\label{def:num_gens_mod}
    For an finitely generated $R$-module $M$ set
    \[\gensover{R}{M} = \min\set{ d \in \Z_{>0} : M \text{ can be generated by $d$ elements over $R$} }. \]
\end{df}

\subsection{Orders and Trace form}\
Let $Z$ be a Dedekind domain with field of fractions $Q$.
A $Q$-algebra $K$ is called {\it \'etale} if there exist an integer $n \geq 0$ and an extension $Q\subseteq Q'$ such that $K\otimes_Q Q'\simeq Q'^n$.
Note that an \'etale $Q$-algebra is commutative, unitary and finite dimensional.
In fact, a commutative finite dimensional $Q$-algebra $K$ is \'etale if and only if $K$ is isomorphic to a finite product of separable algebraic extensions of $Q$.

Let $K$ be a non-zero \'etale $Q$-algebra.
By a {\it $Z$-lattice} in $K$ we mean a finitely generated sub-$Z$-module $L$ of $K$ that contains a $Q$-basis of $K$, or equivalently, such that $\dim_Q(L\otimes_Z Q)=\dim_Q(K)$.
A {\it $Z$-order} in $K$ is a subring of $K$ which is also a $Z$-lattice in $K$.
When the Dedekind domain~$Z$ is clear from the context, we will omit it from the terminology and talk simply about lattices and orders.
An order $S$ is reduced, Noetherian,
integral over~$Z$,
and has Krull dimension one.
Hence orders are Cohen-Macaulay rings.
Note that $K$ is the total ring of quotients of $S$.
By an {\it overorder} of $S$, we mean an order $S'$ such that $S \subseteq S' \subset K$.

Let $\cO_K$ be the integral closure of $Z$ in $K$.
Since $Z$ is a Dedekind domain and $K$ is \'etale, one shows that $\cO_K$ is an order in $K$ and contains all orders in $K$.
Therefore we will call $\cO_K$ the {\it maximal order} of~$K$.
%


Pick an element $x$ in $K$.
After fixing a basis of $K$ over $Q$, we can represent the multiplication by $x$ on $K$ by a matrix $m_x$ with coefficients in $Q$, whose {\it trace} $\Tr(x)$ does not depend on the choice of the basis.
Since $K$ is \'etale, the induced bilinear form
\[ K \times K \to Q \qquad (x,y) \mapsto \Tr(xy)\]
is non-degenerate.

Let $L$ be a lattice in $K$.
The trace dual lattice $L^t$ of $L$ is defined as
\[ L^t = \set{ x \in K : \Tr(xL)\subseteq Z }. \]
We have a canonical isomorphism of $Z$-modules
\begin{equation}\label{eq:Homtracedual}
     L^t \simeq \Hom_Z(L,Z),
\end{equation}
induced by $x\mapsto (y \mapsto \Tr(xy))$.
Observe that $(L^t)^t = L$.

Given two lattices $L$ and $L'$ in $K$ we define the {\it colon} $(L:L')$ by
\[ (L:L')=\set{ x \in K \ :\ xL'\subseteq L}. \]
Note that there $(L:L')$ is a lattice and contains a non-zero element of $Z$.
Similarly, the {\it multiplicator ring} of $L$ is the order $(L:L)$.
Observe that the isomorphism in \eqref{eq:Homtracedual} is $S$-linear for every subring $S$ of $(L:L)$.

\subsection{Fractional ideals}\
Let $Z$ be a Dedekind domain with field of fractions~$Q$.
Let $S$ be an order in a non-zero \'etale $Q$-algebra $K$.
A {\it fractional $S$-ideal} $I$ is a finitely generated sub-$S$-module of $K$
which is also a lattice.
Note that a finitely generated sub-$S$-module $I$ of $K$, for example an ideal of~$S$, is a fractional $S$-ideal if and only if $I$ contains a non-zero divisor of~$K$.
Given fractional $S$-ideals $I$ and $J$,
by Lemma \ref{lemma:homcolon}, we have canonical identifications
\begin{equation}
    \Hom_S(J,I) \simeq (I:J) \quad\text{and}\quad \End_S(I) \simeq (I:I).
\end{equation}
\begin{lemma}
\label{lemma:frac_idls}
    Let $I$ and $J$ be fractional $S$-ideals.
    \begin{enumerate}[(i)]
        \item \label{lemma:frac_idls:operations} $IJ$, $I+J$, $I \cap J$, $(I:J)$ and $I^t$ are fractional $S$-ideals.
        \item \label{lemma:frac_idls:eltofZ} $I$ contains a non-zero element of $Z$.
        \item \label{lemma:frac_idls:colontrace} $I J^t = (J:I)^t$.
        \item \label{lemma:frac_idls:pontryagin}
        If $J\subseteq I$ then the $Z$-modules $ I/J $ and $J^t/I^t$ are isomorphic.
    \end{enumerate}
\end{lemma}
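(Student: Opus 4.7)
The plan is to address each of the four parts in turn, with the main substance concentrated on (iii) and (iv).

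For (i) and (ii), the verifications are routine. Part (ii) is immediate: since $I$ is a lattice, $I\otimes_Z Q = K$, so $1\in K$ admits a ``denominator'' $z\in Z\setminus\{0\}$ with $z = z\cdot 1\in I$. For (i), each construction is a sub-$S$-module of a known lattice (or contains one), so finite generation over $S$ follows from Noetherianity; the identifications $(I:J)\simeq \Hom_S(J,I)$ from Lemma \ref{lemma:homcolon} and $I^t\simeq \Hom_Z(I,Z)$ from \eqref{eq:Homtracedual} handle the colon and the trace dual respectively. Containment of a $Q$-basis of $K$ is then ensured by denominator-clearing (for $I\cap J$ one picks $z\in Z\setminus\{0\}$ with $zI\subseteq J$, so that $zI\subseteq I\cap J$ is a lattice) or, for $I^t$, by dualizing a $Q$-basis of $K$ lying in $I$ via the non-degenerate trace form and scaling.

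For (iii), I would exploit that taking the trace dual is an order-reversing involution on lattices, so $IJ^t=(J:I)^t$ is equivalent, by applying $(-)^t$, to
\[ (IJ^t)^t = (J:I). \]
This then unwinds in one step:
\[ (IJ^t)^t = \{x\in K : \Tr(x\cdot IJ^t)\subseteq Z\} = \{x\in K : xI\subseteq (J^t)^t\} = (J:I), \]
using $(J^t)^t = J$ in the middle equality.

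For (iv), the plan is to apply $\Hom_Z(-,Z)$ to the short exact sequence
\[ 0 \to J \to I \to I/J \to 0. \]
Because $I$ is a finitely generated torsion-free, hence projective, module over the Dedekind domain $Z$, one has $\Ext^1_Z(I,Z)=0$; and $\Hom_Z(I/J,Z)=0$ since $I/J$ is $Z$-torsion (both $I$ and $J$ have $Z$-rank $\dim_Q K$). Using the isomorphism $L^t\simeq \Hom_Z(L,Z)$ of \eqref{eq:Homtracedual}, the long exact sequence collapses to
\[ 0 \to I^t \to J^t \to \Ext^1_Z(I/J,Z) \to 0, \]
yielding $J^t/I^t \simeq \Ext^1_Z(I/J,Z)$. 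It then remains to produce a $Z$-module isomorphism $\Ext^1_Z(I/J,Z)\simeq I/J$. The module $I/J$ is finitely generated and torsion, hence of finite length, and supported at finitely many primes $\p$ of $Z$. After localizing, $Z_\p$ is a DVR, the structure theorem decomposes $(I/J)_\p$ as a finite direct sum of cyclic modules $Z_\p/\p^n$, and the resolution $0\to Z_\p\to Z_\p\to Z_\p/\p^n\to 0$ gives $\Ext^1_{Z_\p}(Z_\p/\p^n,Z_\p)\simeq Z_\p/\p^n$; reassembling over the primes yields the desired isomorphism. The main (mild) obstacle is precisely this last step: the isomorphism $\Ext^1_Z(I/J,Z)\simeq I/J$ is only non-canonical, being obtained from the local structure theorem, but the statement asks only for an abstract $Z$-module isomorphism, so this is enough.
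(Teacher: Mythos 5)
Your proposal is correct, and for parts (i)--(iii) it is essentially the same routine verification the paper gives (the paper declares (i) ``clear'', derives (ii) from the observation that $(I:S)=I$ contains a non-zero element of $Z$, and calls (iii) a direct verification; your dualize-and-unwind argument for (iii), using $(L^t)^t=L$, is exactly such a verification). The genuine divergence is in part (iv). The paper sets $M=I/J$, notes $M$ has finite length, invokes Matlis duality with $E=Q/Z$ as minimal injective cogenerator together with the cited fact that finite-length modules over a Dedekind domain are isomorphic to their Matlis duals, and exhibits a canonical $S$-linear isomorphism $J^t/I^t\simeq \Hom_Z(M,E)$ induced by the trace pairing $x+I^t\mapsto (y+J\mapsto \Tr(xy)+Z)$. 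You instead dualize the sequence $0\to J\to I\to M\to 0$ against $Z$, using projectivity of $I$ and torsionness of $M$ to get $J^t/I^t\simeq \Ext^1_Z(M,Z)$, and then compute $\Ext^1$ locally over DVRs via the structure theorem. Both routes are valid; indeed they are linked by $\Ext^1_Z(M,Z)\simeq\Hom_Z(M,Q/Z)$ for torsion $M$. Your version is more self-contained and avoids the Matlis-duality citations, at the cost of a non-canonical final isomorphism (harmless here, since the lemma only asserts a $Z$-module isomorphism, and that is all the paper uses later); the paper's version keeps the identification $J^t/I^t\simeq\Hom_Z(M,E)$ canonical and $S$-linear, deferring all non-canonicity to the self-duality of finite-length modules. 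Two small points you gloss over but which are easily supplied: the restriction map $\Hom_Z(I,Z)\to\Hom_Z(J,Z)$ corresponds to the inclusion $I^t\subseteq J^t$ under \eqref{eq:Homtracedual}, and the reassembly step uses that $\Ext$ commutes with localization for finitely generated modules over a Noetherian ring.
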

\begin{proof}
    Part \ref{lemma:frac_idls:operations} is clear.
    For Part \ref{lemma:frac_idls:eltofZ}, since $S$ and $I$ are lattices, then
    the colon $(I:S)=I$ contains a non-zero element of $Z$.
    Part \ref{lemma:frac_idls:colontrace} is a direct verification.

    We now prove Part \ref{lemma:frac_idls:pontryagin}.
    Put $M=I/J$.
    Observe that $M$ is a torsion module over the Dedekind domain $Z$.
    Hence $M$ has finite length as a $Z$-module.
    Note that $E=Q/Z$ is a minimal injective cogenerator of~$Z$, by for example \cite[Examples 19.11.(1)]{Lam99}.
    It is known that every module of finite length over a Dedekind domain is isomorphic to its Matlis dual.
    See for example \cite[Theorem~1.7]{Ooishi76}.
    Hence we have an isomorphism of~$Z$-modules
    \[ M \simeq \Hom_Z(M,E). \]
    To conclude the proof, it is enough to observe that we have an $S$-linear isomorphism
    \begin{equation}\label{eq:Homtracedualquot}
        J^t/I^t \simeq \Hom_Z(M,E)
    \end{equation}
    induced by
    \[ x+I^t \mapsto (y+J \mapsto \Tr(xy)+Z). \]
    See for example \cite[page 59]{BF65rus}.
    %
\end{proof}

The next lemma, which will be used later in the paper, shows that we can always find generators of a fractional ideal which are non-zero divisors.
\begin{lemma}\label{lemma:gens_nonzerodiv}
    Let $I$ be fractional $S$-ideal in $K$, say $I=y_1 S + \ldots + y_r S$.
    Then there are elements $x_0,x_1,\ldots, x_r$ in $I\cap K^\times$ which generate $I$ as an $S$-module.
\end{lemma}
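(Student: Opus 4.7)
The plan is to secure one non-zero-divisor generator using Lemma~\ref{lemma:frac_idls}\ref{lemma:frac_idls:eltofZ}, and then perturb each of the given $y_i$ by a $Z$-multiple of that generator so as to land in $K^\times$, relying on $Z$ being infinite and $K$ having only finitely many minimal primes.

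First, by Lemma~\ref{lemma:frac_idls}\ref{lemma:frac_idls:eltofZ}, there is a non-zero element $x_0 \in I \cap Z$. Since $Z\setminus\set{0} \subseteq Q^\times \subseteq K^\times$, this $x_0$ is a non-zero divisor of $K$. Adjoining $x_0$ to the given generators does not change the ideal: $x_0 S + y_1 S + \ldots + y_r S = I$, because $x_0 \in I$ already.

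Next, since $K$ is \'etale over $Q$, it decomposes as a finite product $K = K_1 \times \cdots \times K_n$ of field extensions of $Q$, and an element of $K$ is a non-zero divisor if and only if each of its $n$ components is non-zero. Fix $i \in \set{1,\ldots,r}$. The element $y_i + m x_0$ fails to lie in $K^\times$ only if $(y_i)_j + m\,(x_0)_j = 0$ in some factor $K_j$; since $(x_0)_j \neq 0$ and the natural map $Z \to K_j$ is injective, this equation admits at most one solution $m \in Z$ for each $j$, hence at most $n$ forbidden values of $m$ in total. Because $Z$ is infinite, we can pick $n_i \in Z$ avoiding all of them, and set $x_i = y_i + n_i x_0 \in I \cap K^\times$.

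Finally, because each $y_i = x_i - n_i x_0$ lies in the $S$-span of $\set{x_0,x_1,\ldots,x_r}$ (with $n_i \in Z \subseteq S$), this set generates the same $S$-module as $\set{x_0,y_1,\ldots,y_r\}$, namely $I$, giving the desired conclusion. No genuine obstacle arises beyond the bookkeeping of which values of $m$ to avoid: the proof rests on the three inputs of Lemma~\ref{lemma:frac_idls}\ref{lemma:frac_idls:eltofZ}, the product decomposition of the \'etale algebra $K$, and the infiniteness of the Dedekind domain $Z$.
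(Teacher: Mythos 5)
Your proof is correct and follows essentially the same route as the paper's: take a non-zero element $x_0$ of $I\cap Z$ (hence of $I\cap K^\times$), decompose $K$ into a product of fields, and adjust each $y_i$ by a suitable $Z$-multiple of $x_0$ to kill the finitely many vanishing-component constraints, using that $Z$ is infinite. The only cosmetic differences are the sign of the perturbation and your slightly more explicit count of the forbidden values of $m$.
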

\begin{proof}
    By Lemma \ref{lemma:frac_idls}.\ref{lemma:frac_idls:eltofZ}, the set $I\cap K^\times$ is non-empty, say containing $x_0$.
    Write $K=K_1\times \ldots \times K_n$, with $K_i$ fields extension of $Q$, and denote by $\pi_i$ the projection $K\twoheadrightarrow K_i$.
    Observe that $\pi_i(x_0)\neq 0$ for every $i$.
    Now, for each generator $y_j$ of $I$, set $x_j=y_j-z_jx_0$ for $z_j\in Z$ such that
    \[ \pi_i(y_j-z_jx_0) \neq 0 \]
    for every $i$.
    Observe that such a $z_j$ exists since we have finitely many constrains while $Z$ is infinite.
    Moreover $x_j S + x_0 S = y_j S +x_0 S$ which shows that the elements $x_0,x_1,\ldots, x_r$ generate $I$ over $S$.
\end{proof}

\subsection{Primes}\
Let $Z$ be a Dedekind domain with field of fractions~$Q$.
Let~$S$ be an order in a non-zero \'etale $Q$-algebra $K$.
A {\it prime}\footnote{In \cite{Reiner03} prime ideals of orders are required to be fractional ideals, that is, they coincide with what we call primes.}
 of $S$ is a fractional $S$-ideal which is also a prime ideal of $S$.
We now state some lemmas that are well known.
\begin{lemma}[{\cite[Theorem 22.3]{Reiner03}}]\label{lemma:primesmax}
    The primes of $S$ are precisely the maximal ideals of $S$.
    If $\p$ is a prime of $S$, then the contraction $p$ of $\p$ in $Z$ is a prime of $Z$, and the field $S/\p$ is a finite extension of $Z/p$.
\end{lemma}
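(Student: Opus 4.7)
The plan is to prove the lemma in three stages: first, that every maximal ideal of $S$ is a fractional $S$-ideal; second, that every prime ideal of $S$ which is also a fractional ideal must be maximal; third, that for such a $\p$ the contraction $p = \p \cap Z$ is a prime of $Z$ and $S/\p$ is a finite extension of $Z/p$. Together the first two stages will give the identification of primes (in the sense of the paper) with maximal ideals, and the third will then take care of the remaining statements.

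The fractional-implies-maximal direction is the easy one. If $\p$ is prime and a fractional ideal, then by Lemma \ref{lemma:frac_idls}.\ref{lemma:frac_idls:eltofZ} the contraction $p = \p \cap Z$ is non-zero. Since $Z$ is Dedekind, $p$ is then a maximal ideal of $Z$, and, since $S$ is integral over $Z$, the domain $S/\p$ is integral over the field $Z/p$, hence itself a field; so $\p$ is maximal. Because $S$ is finitely generated as a $Z$-module (being a $Z$-lattice), $S/\p$ is finitely generated over $Z/p$, and a finitely generated module over a field is finite dimensional, which yields the assertion on residue fields.

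The opposite direction requires a short argument. Given a maximal ideal $\p$, it is finitely generated over the Noetherian ring $S$, so to see it is fractional it suffices to exhibit a lattice inside it, and for that it is enough to find a non-zero element of $Z$ in $\p$, i.e.\ to show $p := \p \cap Z \neq 0$. I would argue this by contradiction. Suppose $p = 0$; then the injection $Z \hookrightarrow S/\p$, together with the fact that $S/\p$ is a field, gives $Q \subseteq S/\p$. Since $S$ is a finitely generated $Z$-module, so is the quotient $S/\p$, and by Noetherianity of $Z$ the sub-$Z$-module $Q$ is also finitely generated over $Z$. Clearing a common denominator of a finite generating set yields $b Q \subseteq Z$ for some non-zero $b \in Z$; but $b \in Q^\times$ forces $bQ = Q$, so $Q \subseteq Z$, whence $Z = Q$, contradicting the convention from Section \ref{sec:prelim} that Dedekind domains are not fields.

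The main obstacle is precisely this last point, namely ruling out $p = 0$ for $\p$ maximal; every other step is a direct unpacking of definitions combined with standard facts about integral extensions. The argument genuinely uses the convention that excludes fields from the class of Dedekind domains, so that convention must be explicitly invoked.
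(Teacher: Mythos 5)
Your proof is correct, but it is worth noting that the paper does not prove this lemma at all: it is quoted from Reiner (Theorem 22.3), so any self-contained argument is "extra" relative to the text. Your route is a sound elementary substitute. For the direction "fractional prime $\Rightarrow$ maximal" you correctly combine Lemma \ref{lemma:frac_idls}.\ref{lemma:frac_idls:eltofZ} (so $\p\cap Z\neq 0$, hence maximal in the Dedekind domain $Z$) with integrality of $S$ over $Z$ and the fact that a domain integral over a field is a field; finiteness of $S/\p$ over $Z/p$ then falls out of $S$ being a finite $Z$-module. For the converse you reduce, via the paper's remark that a finitely generated sub-$S$-module of $K$ is a fractional ideal as soon as it contains a non-zero divisor, to showing $\p\cap Z\neq 0$, and you rule out $\p\cap Z=0$ by embedding $Q$ into the finite $Z$-module $S/\p$ and clearing denominators — in effect re-proving, in the finite-module setting, the standard fact that the contraction of a maximal ideal along an integral extension is maximal, which is also the engine of Reiner's proof. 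Your explicit appeal to the paper's convention that Dedekind domains are not fields is genuinely needed (if $Z=Q$ the statement fails, since maximal ideals of $K$ are then not lattices), so flagging it is appropriate. The only cosmetic point is that, for the lattice property of $\p$, you should also note that $\p$ is finitely generated over $Z$ (immediate since $S$ is a finite module over the Noetherian ring $Z$), or simply invoke the paper's non-zero-divisor criterion directly; this is a one-line remark, not a gap.
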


\begin{lemma}[{\cite[Theorem 23.1]{Reiner03}}]\label{lemma:Icontprodprimes}
    Let $I\subsetneq S$ be a fractional $S$-ideal.
    Then $I$ contains a product of primes of $S$.
\end{lemma}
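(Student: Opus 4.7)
The plan is a standard Noetherian induction. Let
\[ \Sigma = \set{\, I\subsetneq S : I \text{ is a fractional } S\text{-ideal not containing any finite product of primes of } S \,}. \]
Suppose for contradiction that $\Sigma \neq \emptyset$. Since $S$ is Noetherian (being an order over the Dedekind domain $Z$), the family $\Sigma$ admits an element $I$ that is maximal with respect to inclusion. The entire strategy is to derive a contradiction from this maximal counterexample.

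First I would observe that $I$ itself cannot be a prime ideal of $S$. Indeed, a proper prime ideal of $S$ which is also a fractional ideal is by definition a prime of $S$, so it contains the one-factor product consisting of itself, and hence cannot lie in $\Sigma$. Since $I$ is proper and not prime, there exist $a,b\in S$ with $ab\in I$ but $a,b\notin I$. Set $J_1 = I + aS$ and $J_2 = I + bS$, both strictly containing $I$. I would then check that $J_1$ and $J_2$ are proper ideals of $S$ by the standard trick: if, say, $J_1 = S$, write $1 = i + as$ with $i\in I$ and $s\in S$, and multiply by $b$ to obtain $b = bi + abs \in I$, contradicting $b\notin I$. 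Moreover, $J_1$ and $J_2$ are finitely generated sub-$S$-modules of $K$ containing the lattice $I$, so they are themselves lattices, and therefore fractional $S$-ideals in the sense of the paper.

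By the maximality of $I$ in $\Sigma$, neither $J_1$ nor $J_2$ belongs to $\Sigma$, so each contains a finite product of primes, say $J_1 \supseteq \p_1 \cdots \p_r$ and $J_2 \supseteq \q_1 \cdots \q_s$. Expanding and using $ab\in I$ yields
\[ \p_1 \cdots \p_r \q_1 \cdots \q_s \subseteq J_1 J_2 = (I + aS)(I + bS) \subseteq I + abS \subseteq I, \]
which contradicts $I\in\Sigma$ and finishes the argument.

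The main (and essentially only) obstacle is the verification that the enlargements $J_1$ and $J_2$ remain proper and remain fractional ideals; both follow from the elementary arguments above, where the properness depends crucially on $ab\in I$ and $b\notin I$. Everything else is the classical Noetherian-induction proof, applied under the paper's convention that primes of $S$ are fractional prime ideals, equivalently maximal ideals by Lemma~\ref{lemma:primesmax}.
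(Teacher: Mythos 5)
Your proof is correct. The paper does not actually supply an argument for this lemma -- it simply cites Reiner's Theorem 23.1 -- and your maximal-counterexample Noetherian induction is exactly the classical argument behind that citation, correctly adapted to the paper's convention that a prime of $S$ means a prime ideal which is also a fractional ideal (so a prime counterexample would contain the one-factor product of itself), and with the needed checks that $J_1=I+aS$ and $J_2=I+bS$ are proper fractional $S$-ideals carried out properly.
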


\begin{lemma}\label{lemma:multringp}
    Let $\p$ be a prime of $S$.
    Then $S \subsetneq (S:\p)$.
    If~$\p$ is invertible then $(\p:\p)=S$, while if $\p$ is not invertible then $(\p:\p)=(S:\p)$.
\end{lemma}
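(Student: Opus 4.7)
The plan is to prove the three assertions in order, using the lemmas already established in the preliminaries, most importantly Lemma \ref{lemma:Icontprodprimes} (containment of products of primes), Lemma \ref{lemma:invmultring} (invertible implies multiplicator ring equals $S$), and Lemma \ref{lemma:frac_idls}.

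For the strict inclusion $S \subsetneq (S:\p)$, I would pick a non-zero element $c\in Z\cap\p$, which exists by Lemma \ref{lemma:frac_idls}.\ref{lemma:frac_idls:eltofZ} applied to $\p$ (every prime is a fractional ideal by assumption). Then $cS \subseteq \p \subsetneq S$, and by Lemma \ref{lemma:Icontprodprimes}, $cS$ contains a product $\p_1\cdots \p_n$ of primes with $n$ minimal. Since $\p$ is prime and contains this product, $\p$ contains some $\p_i$, and maximality of $\p_i$ (Lemma \ref{lemma:primesmax}) forces $\p=\p_i$, say $\p=\p_1$. If $n=1$, then $c\,S \supseteq \p$ gives $c^{-1}\p\subseteq S$, so $c^{-1}\in(S:\p)$; and $c^{-1}\notin S$, since otherwise $c\in S^\times$ and $\p=S$, a contradiction. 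If $n\geq 2$, minimality of $n$ gives some $y\in \p_2\cdots\p_n$ with $y\notin cS$, and then $y\p\subseteq\p_1\cdots\p_n\subseteq cS$, so $c^{-1}y\in(S:\p)\setminus S$.

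For the second assertion, if $\p$ is invertible, Lemma \ref{lemma:invmultring} applied with $R=S$ and $I=\p$ immediately yields $(\p:\p)=S$.

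For the third assertion, the inclusion $(\p:\p)\subseteq(S:\p)$ is trivial from $\p\subseteq S$. For the reverse, I would observe that $\p(S:\p)$ is an ideal of $S$ (it lies in $S$ by definition of the colon and is closed under multiplication by $S$) which contains $\p=\p\cdot 1$. By maximality of $\p$, either $\p(S:\p)=\p$ or $\p(S:\p)=S$. The latter means precisely that $\p$ is invertible, contradicting our hypothesis. Hence $\p(S:\p)=\p$, i.e.\ $(S:\p)\subseteq(\p:\p)$, giving equality.

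The only step that requires any real thought is the strict inclusion $S\subsetneq(S:\p)$, since one must extract an element outside $S$ from the prime-product decomposition given by Lemma \ref{lemma:Icontprodprimes}; the invertibility dichotomy in the last step is then essentially bookkeeping using the maximality of $\p$.
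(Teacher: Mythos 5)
Your proposal is correct and follows essentially the same route as the paper: both arguments produce an element of $(S:\p)\setminus S$ from a minimal product of primes contained in a principal ideal via Lemma \ref{lemma:Icontprodprimes}, invoke Lemma \ref{lemma:invmultring} when $\p$ is invertible, and use maximality of $\p$ to force $\p(S:\p)=\p$ in the non-invertible case. The only cosmetic differences are your choice of $c\in Z\cap\p$ rather than an arbitrary non-zero divisor of $\p$, and your handling of the $n=1$ case by exhibiting $c^{-1}$ directly instead of noting $\p=cS$.
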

\begin{proof}
    Let $x$ be a non-zero divisor in $\p$, which exists by Lemma \ref{lemma:frac_idls}.\ref{lemma:frac_idls:eltofZ}.
    By Lemma~\ref{lemma:Icontprodprimes}, we have
    \[ \p \supseteq xS \supseteq \p_1\cdots\p_r \]
    for primes $\p_i$ of $S$.
    We assume that $r$ is minimal.
    If $r=1$ then $\p=xS$.
    Hence $(S:\p)=(1/x) S$ which strictly contains $S$ since $x$ is not a unit of $S$.
    So assume that $r>1$.
    Then there exists $i$ such that $\p_i=\p$.
    We assume that $\p_1=\p$.
    By minimality of~$r$, there exists $y\in \p_2\cdots \p_r \setminus xS$.
    In particular $(y/x) \not\in S$.
    Also, observe that~$(y/x) \in (S:\p)$ by construction.
    Therefore we have concluded the proof of the first part.

    We now prove the second.
    The inclusion $(\p:\p)\subseteq (S:\p)$ holds by definition.
    If $\p$ is invertible then we conclude by Lemma~\ref{lemma:invmultring}.
    Assume that~$\p$ is not invertible, that is, $\p(S:\p) \subsetneq S$.
    Since $\p$ is maximal by Lemma \ref{lemma:primesmax}, one obtains that
    $\p(S:\p)=\p$.
    Therefore we also have~$(S:\p)\subseteq (\p:\p)$.
\end{proof}

\begin{lemma}
    \label{lemma:finmanyprimes:Iinp}
    Every fractional $S$-ideal $I\subset S$ is contained in finitely many primes of $S$.
\end{lemma}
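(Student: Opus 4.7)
The plan is to combine Lemma \ref{lemma:Icontprodprimes} (which produces a product of primes sitting inside $I$) with Lemma \ref{lemma:primesmax} (which tells us that primes of $S$ are maximal), reducing the problem to a completely standard argument.

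Concretely, since $I\subsetneq S$ is a proper fractional $S$-ideal, Lemma \ref{lemma:Icontprodprimes} yields primes $\p_1,\ldots,\p_r$ of $S$ (not necessarily distinct) such that
\[ \p_1\p_2\cdots\p_r \subseteq I. \]
I would then let $\q$ be any prime of $S$ containing $I$, so that $\p_1\cdots\p_r \subseteq \q$. Because $\q$ is a prime ideal of $S$, some factor $\p_i$ must be contained in $\q$. By Lemma \ref{lemma:primesmax}, both $\p_i$ and $\q$ are maximal ideals of $S$, forcing $\p_i = \q$. Hence every prime of $S$ that contains $I$ appears among $\p_1,\ldots,\p_r$, and the set of such primes is finite (of size at most $r$).

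There is really no obstacle here; the only thing to be slightly careful about is the standing convention that ``prime of $S$'' means maximal ideal that is also a fractional $S$-ideal, so that Lemmas \ref{lemma:Icontprodprimes} and \ref{lemma:primesmax} apply verbatim to the $\p_i$ and to $\q$ produced above.
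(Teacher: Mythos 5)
Your argument is correct, but it is not the route the paper takes. You deduce finiteness from Lemma \ref{lemma:Icontprodprimes}: a product $\p_1\cdots\p_r\subseteq I$ exists, any prime $\q\supseteq I$ must contain some factor $\p_i$ (primality), and maximality of $\p_i$ (Lemma \ref{lemma:primesmax}) forces $\q=\p_i$, so the primes containing $I$ are among $\p_1,\ldots,\p_r$. The paper instead argues through the quotient: every prime ideal of $S$ containing $I$ is automatically a fractional ideal (since it contains $I$), hence a maximal ideal by Lemma \ref{lemma:primesmax}; therefore $S/I$ is a Noetherian ring of Krull dimension $0$, hence Artinian, and an Artinian ring has only finitely many maximal ideals. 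Your version buys an explicit bound (at most $r$ primes, with $r$ the number of factors) and avoids any appeal to dimension theory or Artinian rings, at the cost of relying on the quoted Lemma \ref{lemma:Icontprodprimes} (Reiner, Theorem 23.1); the paper's version is self-contained modulo standard commutative algebra and yields as a byproduct that \emph{every} prime ideal of $S$ containing $I$ is a prime of $S$ in the paper's sense. One small point you implicitly handled correctly: if $I=S$ the statement is vacuous since primes are proper ideals, so assuming $I\subsetneq S$ before invoking Lemma \ref{lemma:Icontprodprimes} is harmless.
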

\begin{proof}
    Observe that the prime ideals of $S$ containing $I$ are necessarily fractional ideals,
    hence they are primes by Lemma \ref{lemma:primesmax}.
    In other words, the ring~$S/I$ has Krull dimension $0$, and since it is Noetherian, it is also Artinian.
    Therefore it has only finitely many maximal ideals,
    which correspond to the primes of~$S$ containing $I$.
\end{proof}

\subsection{Localization}\
Let $Z$ be a Dedekind domain with field of fractions~$Q$.
Let~$S$ be a $Z$-order in a non-zero \'etale $Q$-algebra $K$.
Let $\p$ be a prime ideal of~$S$.
We will denote by $S_\p$ the localization of $S$ at $\p$ and by $Z_p$ the localization of $Z$ at the contraction $p$ of $\p$ in $Z$.
Observe that $Z_p$ is a overring of $Z$ and has field of fractions $Q$.
On the other hand, in general, the total ring of quotients of $S_\p$ is not $K$.

\begin{lemma}\label{lemma:loc_ring_quot}
    The total ring of quotients $\cQ(S_\p)$ is a quotient $Q$-algebra of~$K$.
    The restriction to $S$ of quotient morphism $\pi_\p:K \twoheadrightarrow \cQ(S_\p)$ induces the localization morphism $S\to S_\p$.
\end{lemma}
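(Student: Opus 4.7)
The plan is to leverage the decomposition of the \'etale algebra $K$ as a finite product of fields. Write $K \simeq K_1 \times \cdots \times K_n$ with each $K_i$ a finite separable field extension of $Q$; the minimal primes of $S$ are then precisely $\q_i = \ker(S \twoheadrightarrow K_i)$, and $\bigcap_{i=1}^n \q_i = 0$ since $S$ is reduced. Set $U = S \setminus \p$ and $I = \set{i : \q_i \subseteq \p}$, and let $\pi_\p : K \to L := \prod_{i \in I} K_i$ be the natural projection, a surjective $Q$-algebra morphism.

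First I would show that the restriction of $\pi_\p$ to $S$ factors through $S \to S_\p$. It suffices to check that every $u \in U$ is sent by $\pi_\p$ to a unit of $L$: for $i \in I$ we have $u \notin \p \supseteq \q_i$, so the $i$-th component of $u$ is a non-zero element of the field $K_i$, hence a unit. By the universal property of localization, $\pi_\p|_S$ descends to a ring homomorphism $\bar\pi : S_\p \to L$. Its kernel equals the extension $\bigl(\bigcap_{i \in I}\q_i\bigr)S_\p$, which coincides with the nilradical of $S_\p$ since the minimal primes of $S_\p$ are exactly the $\q_i S_\p$ for $i \in I$; the reducedness of $S_\p$ then gives that $\bar\pi$ is injective.

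Next I would identify $L$ with $\cQ(S_\p)$. Every non-zero divisor $s \in S_\p$ lies outside every minimal prime $\q_i S_\p$ (for $i \in I$), so $\bar\pi(s)$ has all components non-zero and is a unit of $L$; the universal property of the total ring of quotients extends $\bar\pi$ to an injection $\cQ(S_\p) \hookrightarrow L$. For surjectivity, since $S$ is a $Z$-lattice in $K$ we have $Q \cdot S = K$, and applying $\pi_\p$ yields $Q \cdot \bar\pi(S) = L$; hence every $\xi \in L$ can be written as $\bar\pi(s)/z$ with $s \in S$ and $0 \neq z \in Z$. Such a $z$ is a non-zero divisor of $S_\p$ because $S$ is $Z$-torsion-free, so $\xi$ lies in the image of $\cQ(S_\p)$. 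The resulting isomorphism $\cQ(S_\p) \simeq L$ realizes $\cQ(S_\p)$ as a quotient of $K$ via $\pi_\p$, and by construction this quotient morphism restricts on $S$ to the composition of the localization $S \to S_\p$ with the canonical inclusion $S_\p \hookrightarrow \cQ(S_\p)$.

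The main subtlety is identifying which factors of $K$ survive in $\cQ(S_\p)$ and verifying the isomorphism $L \simeq \cQ(S_\p)$; once the minimal-prime bookkeeping is in place, the remaining assertions follow formally from the universal property of localization together with the $Z$-lattice property of $S$.
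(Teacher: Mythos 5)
Your proof is correct, and it arrives at the same decomposition as the paper's: $\cQ(S_\p)$ gets identified with the product of the simple factors of $K$ corresponding to the minimal primes of $S$ contained in $\p$. The difference is in how that identification is verified. The paper invokes the structure of total quotient rings of reduced Noetherian rings twice (zero divisors are the union of the finitely many minimal primes), giving $K\simeq \prod_i \cQ(S/P_i)$ and $\cQ(S_\p)\simeq \prod_{P_i\subseteq \p}\cQ(S_\p/P_iS_\p)$, and then uses the natural isomorphisms $\cQ(S_\p/P_iS_\p)\simeq \cQ(S/P_i)$. You instead build the map by hand: $\pi_\p|_S$ sends $S\setminus\p$ to units of $L=\prod_{\q_i\subseteq\p}K_i$, hence factors through $S_\p$; injectivity of $S_\p\to L$ follows from reducedness of $S_\p$ (the kernel is the localized nilradical), and surjectivity of the induced map $\cQ(S_\p)\to L$ follows from the $Z$-lattice property $Q\cdot S=K$, with $z\in Z\setminus\set{0}$ a non-zero divisor of $S_\p$ because multiplication by $z$ is injective on $S$ and localization is exact (state that last step explicitly; ``$S$ is $Z$-torsion-free'' alone is slightly short of it). Your route is more self-contained — it only needs the easy half of the structural fact (elements of a minimal prime of a reduced ring are zero divisors) and it makes the paper's ``straightforward verification'' of the restriction statement explicit — at the cost of some minimal-prime bookkeeping (that the $\q_i$ are pairwise incomparable and are exactly the minimal primes; also note $S\to K_i$ is not surjective in general, so the twoheadrightarrow is only notational). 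The paper's version is shorter given the cited structure theory; both are valid.
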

\begin{proof}
    Since $S$ is a Noetherian reduced ring, the set of zero-divisors of $S$ is the union of the finitely many minimal prime ideals $P_1,\ldots,P_n$ of $S$.
    Hence the total ring of quotients $K$ of $S$ satisfies
    \[ K \simeq \cQ(S/P_1) \times \ldots \times \cQ(S/P_n). \]
    After possibly reindexing, assume that $P_1,\ldots,P_s$ are the minimal prime ideals of $S$ which are contained in $\p$.
    We obtain that
    \[ \cQ(S_\p) \simeq \cQ(S_\p/P_1S_\p)\times \ldots \times \cQ(S_\p/P_sS_\p). \]
    Now observe that for each $i=1,\ldots,s$ we have a natural isomorphism $\cQ(S_\p/P_iS_\p)\simeq \cQ(S/P_i)$.
    Hence we get a natural quotient structure
    \[ \pi_\p:K \twoheadrightarrow \cQ(S_\p). \]
    The fact that $\pi_\p$ restricted to $S$ induces the localization morphism is a straightforward verification.
\end{proof}

In what follows we will denote by $I_\p$ the localization of a fractional $S$-ideal~$I$ at the prime $\p$ of $S$.
Observe that $I_\p$ can be naturally considered a sub-$S_\p$-module of $\cQ(S_\p)$.
In fact, by Lemma \ref{lemma:loc_ring_quot}, we get
\[ I_\p = \pi_\p(I)S_\p. \]
In particular, given fractional $S$-ideals $I$ and $J$ we have that $J_\p$ contains a non-zero divisor of $\cQ(S_\p)$.
Therefore by Lemma \ref{lemma:homcolon}
we have a natural isomorphism of $S_\p$-modules
\[ (I_\p:_{\cQ(S_\p)}J_\p) \simeq \Hom_{S_\p}(J_\p,I_\p). \]

\begin{lemma}\label{lemma:loc_idl_operations}
    Let $\p$ be a prime of $S$.
    For fractional $S$-ideals $I$ and $J$, we have the following equalities:
    \begin{gather*}
        (I+J)_\p=I_\p+J_\p, \quad (IJ)_\p = I_\p J_\p, \quad (I\cap J)_\p=I_\p \cap J_\p,\quad\text{and}\\
        \quad (I:_K J)_\p = (I_\p:_{\cQ(S_\p)}J_\p).
    \end{gather*}
\end{lemma}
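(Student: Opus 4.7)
The plan is to handle the four equalities in turn, using exactness and compatibility of localization with the respective module-theoretic operations. The sum, product, and intersection are essentially formal; the colon is the one that needs real input, and for it I would reduce to the intersection case.

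For the sum, $I+J$ is the $S$-submodule of $K$ generated by $I\cup J$, and localization turns this into the $S_\p$-submodule of $\cQ(S_\p)$ generated by $\pi_\p(I\cup J)$, which is exactly $I_\p+J_\p$. For the product, the same reasoning applies with generators $\{ij:i\in I,\ j\in J\}$. For the intersection, I would apply the exact functor $(-)\otimes_S S_\p$ to the short exact sequence
\begin{equation*}
    0\longrightarrow I\cap J\longrightarrow I\oplus J\longrightarrow I+J\longrightarrow 0,
\end{equation*}
where the second map is $(i,j)\mapsto i-j$; flatness of $S\to S_\p$ then identifies $(I\cap J)_\p$ with $I_\p\cap J_\p$ inside $\cQ(S_\p)$.

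For the colon I would use Lemma \ref{lemma:gens_nonzerodiv} to fix generators $j_1,\ldots,j_n\in J\cap K^\times$ of $J$ over $S$. Since each $j_i$ is a unit in $K$, we have $(I:_K Sj_i)=j_i^{-1}I$, and therefore
\begin{equation*}
    (I:_K J)=\bigcap_{i=1}^n(I:_K Sj_i)=\bigcap_{i=1}^n j_i^{-1}I.
\end{equation*}
Applying the intersection equality just established,
\begin{equation*}
    (I:_K J)_\p=\bigcap_{i=1}^n(j_i^{-1}I)_\p=\bigcap_{i=1}^n\pi_\p(j_i)^{-1}I_\p,
\end{equation*}
where the second step uses that $\pi_\p(j_i)$ is a unit of $\cQ(S_\p)$ with inverse $\pi_\p(j_i^{-1})$. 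On the other side, the elements $\pi_\p(j_1),\ldots,\pi_\p(j_n)$ generate $J_\p$ over $S_\p$ and are units of $\cQ(S_\p)$, so the identical computation performed inside $\cQ(S_\p)$ yields
\begin{equation*}
    (I_\p:_{\cQ(S_\p)}J_\p)=\bigcap_{i=1}^n\pi_\p(j_i)^{-1}I_\p,
\end{equation*}
matching the previous expression.

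The only real obstacle is securing generators of $J$ inside $K^\times$, which is exactly the content of Lemma \ref{lemma:gens_nonzerodiv}; once that is in hand the colon collapses cleanly to a finite intersection of principal scalings of $I$, and the identification with its localization is immediate. I would also need to extend the two-term intersection equality to an $n$-term one, but this follows by an obvious induction on $n$.
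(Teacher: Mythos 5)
Your proposal is correct and follows essentially the same route as the paper: the sum, product and intersection cases are handled by exactness of localization (the paper simply declares these immediate), and the colon case is reduced, exactly as in the paper, to writing $(I:_K J)=\bigcap_i j_i^{-1}I$ for generators $j_1,\ldots,j_n\in J\cap K^\times$ supplied by Lemma \ref{lemma:gens_nonzerodiv} and then localizing the finite intersection. Nothing further is needed.
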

\begin{proof}
    The statement for addition, product and intersection follows immediately.
    For the colon, we proceed as follows.
    Let $x_1,\ldots,x_r$ be generators of $J$ over $S$.
    By Lemma \ref{lemma:gens_nonzerodiv} we can assume that $x_i \in K^\times$ for each $i$.
    Then
    \[
    (I:J)_\p
    = \left( \bigcap_{i=1}^r(I:x_i S) \right)_\p
    = \bigcap_{i=1}^r\frac{1}{\pi_\p(x_i)}I_\p
    = \bigcap_{i=1}^r \left(I_\p : \pi_\p(x_i) S_\p\right)
    = (I_\p : J_\p).
    \]
\end{proof}

\begin{lemma}\label{lemma:loc_inv_princ}
    Let $I$ be a fractional $S$-ideal.
    Then the following statements are equivalent:
    \begin{enumerate}[(i)]
        \item \label{lemma:loc_inv_princ:1} $I$ is invertible.
        \item \label{lemma:loc_inv_princ:2} for every prime $\p$ of $S$ the localization $I_\p$ is principal.
        \item \label{lemma:loc_inv_princ:3} for every prime $\p$ of $S$ the $S/\p$-vector space $I/\p I$ has dimension $1$.
    \end{enumerate}
\end{lemma}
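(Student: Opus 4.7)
The plan is to prove the cycle (i) $\Rightarrow$ (ii) $\Rightarrow$ (iii) $\Rightarrow$ (i), using Lemma \ref{lemma:loc_idl_operations} to pass between global and local statements, and Nakayama's lemma to extract a single generator from the dimension hypothesis in (iii).

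For (i) $\Rightarrow$ (ii), I localize the equality $I(S:I)=S$ at $\p$ to get $I_\p(S_\p:I_\p)=S_\p$, and then write $1=\sum_i a_ib_i$ with $a_i\in I_\p$ and $b_i\in(S_\p:I_\p)$. Since $S_\p$ is local, at least one term $a_ib_i$ must be a unit $u$ of $S_\p$. From $a_ib_i=u$ I conclude that $a_i\in\cQ(S_\p)^\times$, and every $x\in I_\p$ can be written as $a_i\cdot(u^{-1}b_ix)$ with $u^{-1}b_ix\in S_\p$; hence $I_\p=a_iS_\p$ is principal.

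For (ii) $\Rightarrow$ (iii), if $I_\p=xS_\p$ for some $x\in\cQ(S_\p)^\times$, then multiplication by $x$ is an $S_\p$-linear isomorphism $S_\p\to I_\p$, which descends to an isomorphism $S_\p/\p S_\p\simeq I_\p/\p I_\p$. Since $I/\p I$ is already killed by $\p$, it coincides with $I_\p/\p I_\p$ as an $S/\p$-vector space, and the displayed dimension is $1$.

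For (iii) $\Rightarrow$ (i), I apply Nakayama's lemma to the finitely generated $S_\p$-module $I_\p$: the hypothesis gives $I_\p=xS_\p$ for some $x\in I_\p$. Because $I_\p$ contains a non-zero divisor of $\cQ(S_\p)$, this generator $x$ must itself lie in $\cQ(S_\p)^\times$, and a direct computation yields $I_\p(S_\p:I_\p)=S_\p$. By Lemma \ref{lemma:loc_idl_operations}, $(I(S:I))_\p=S_\p$ for every prime $\p$ of $S$; since the inclusion $I(S:I)\subseteq S$ is automatic from the definition of the colon, the finitely generated $S$-module $S/I(S:I)$ has vanishing localization at every maximal ideal and must be zero. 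The only real nuisance throughout the argument is keeping track of non-zero divisors in order to match the paper's definition of ``principal'' as $xR$ with $x\in\cQ(R)^\times$; the rest is standard local-to-global reasoning.
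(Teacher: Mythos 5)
Your proof is correct and follows essentially the same route as the paper: the (i) $\Rightarrow$ (ii) step (localize $I(S:I)=S$ via Lemma \ref{lemma:loc_idl_operations} and use that $S_\p$ is local to extract a unit term, hence a principal generator) is the paper's argument verbatim, and the remaining directions are the paper's Nakayama step and local-to-global step, merely reorganized into a single cycle, with the local-to-global passage phrased as ``all localizations of $S/I(S:I)$ vanish'' instead of the paper's contradiction with $I(S:I)\subseteq\p$. No gaps; the care you take that the local generator is a unit of $\cQ(S_\p)$ matches the paper's definition of principal.
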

\begin{proof}
    Assume \ref{lemma:loc_inv_princ:1}, that is, $I(S:I)=S$.
    Then by Lemma \ref{lemma:loc_idl_operations}, for every prime $\p$ of $S$ we have
    \[ 1 = x_1y_1 + \dots + x_ny_n \]
    for elements $x_i\in I_\p$ and $y_i\in (S_\p:I_\p)$.
    Since $S_\p$ is a local ring, there exists an index $j$ such that $x_jy_j$ is a unit in $S_\p$.
    Hence
    \[ I_\p \subseteq x_j y_j I_\p \subseteq x_j S_\p \subseteq I_\p,  \]
    that is, $I_\p=x_j S_\p$.
    Note that $x_j$ is a non-zero divisor.

    Conversely, assume \ref{lemma:loc_inv_princ:2}.
    Then $I_\p$ is invertible for every prime $\p$ of $S$.
    If $I$ were not invertible, then there would be a prime $\p$ of $S$ such that $I(S:I)\subseteq \p$.
    Then localization at $\p$ together with Lemma \ref{lemma:loc_idl_operations} yields a contradiction.

    The equivalence of \ref{lemma:loc_inv_princ:2} and \ref{lemma:loc_inv_princ:3} is a consequence of Nakayama's Lemma.
\end{proof}

\begin{lemma}\label{lemma:pinvpowerofp}
    Let $\p$ be an invertible prime of $S$.
    Then for every fractional $S$-ideal $I$ we have that $I_\p$ is principal.
    If moreover $I\subseteq S$ then $I_\p = (\p S_\p)^n$ for some integer $n\geq 0$.
\end{lemma}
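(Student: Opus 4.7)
The strategy is to show that $S_\p$ is a discrete valuation ring, from which both parts of the lemma follow immediately.

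First, I would revisit the proof of Lemma \ref{lemma:loc_inv_princ} to extract that, when $\p$ is invertible, there is a generator $\pi$ of $\p S_\p$ which is a non-zero-divisor in $S_\p$ (namely the element $x_j$ constructed there, whose regularity was observed explicitly). Since $S_\p$ is a Noetherian local ring with maximal ideal $\p S_\p$, Krull's intersection theorem yields $\bigcap_{n \geq 0} \p^n S_\p = 0$.

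Next, for every nonzero $a \in S_\p$, let $n \geq 0$ be the largest integer with $a \in \pi^n S_\p$; regularity of $\pi^n$ lets us write $a = \pi^n u$ uniquely, and maximality of $n$ forces $u \notin \pi S_\p$, whence $u \in S_\p^\times$. The identity $(\pi^n u)(\pi^m v) = \pi^{n+m}(uv)$, combined with $\pi^{n+m}$ being regular and $uv$ being a unit, shows that $S_\p$ has no zero-divisors. Hence $S_\p$ is a Noetherian local domain with principal maximal ideal, i.e., a DVR.

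For the two statements of the lemma, note that $\cQ(S_\p)$ is now the fraction field of $S_\p$. Given a fractional $S$-ideal $I$, Lemma \ref{lemma:frac_idls}.\ref{lemma:frac_idls:eltofZ} provides a nonzero $z \in I \cap Z$, and $\pi_\p(z) \neq 0$ in $\cQ(S_\p)$; so $I_\p$ is a nonzero finitely generated sub-$S_\p$-module of its fraction field, hence of the form $\pi^n S_\p$ for some $n \in \Z$ by the classification of such modules over a DVR, and in particular principal. If moreover $I \subseteq S$, then $I_\p \subseteq S_\p$ forces $n \geq 0$, giving $I_\p = (\p S_\p)^n$.

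The main obstacle is confirming that the generator $\pi$ is genuinely a non-zero-divisor in $S_\p$, since this is the lynchpin of both the Krull-style power expansion and the absence of zero-divisors. For non-invertible $\p$ one does not get a DVR (indeed $S_\p$ can genuinely have zero-divisors), and the lemma fails, so the argument relies critically on the invertibility hypothesis.
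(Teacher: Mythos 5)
Your proof is correct, but it takes a genuinely different route from the paper's. You upgrade the hypothesis to the structural statement that $S_\p$ is a DVR: the generator $\pi$ of $\p S_\p$ supplied by Lemma \ref{lemma:loc_inv_princ} is indeed a non-zero-divisor (this is even built into the paper's definition of \emph{principal}, which demands a generator in $\cQ(S_\p)^\times$, so you do not actually need to dig into that lemma's proof), and Krull's intersection theorem plus the factorization $a=\pi^n u$ with $u$ a unit shows $S_\p$ is a Noetherian local domain with principal maximal ideal; both assertions of the lemma then follow from the classification of nonzero finitely generated submodules of the fraction field of a DVR (your claim $\pi_\p(z)\neq 0$ is fine, since a nonzero $z\in Z$ is a unit of $K$ and ring homomorphisms send units to units). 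The paper never proves $S_\p$ is a domain: for $I\subseteq S$ it invokes Lemma \ref{lemma:Icontprodprimes} to get $\frm^n\subseteq I_\p$ with $\frm=\p S_\p$, then an induction using invertibility of $\frm$ in $S_\p$ gives $I_\p=\frm^n$ for the minimal such $n$, and the general case is reduced to the integral one by clearing denominators with a non-zero divisor of $(S:I)$. Your approach buys the stronger local fact (which in the paper surfaces only later, through Lemma \ref{lemma:finmanyprimes}, in the form $S_\p=\cO_{K,\p}$) at the cost of importing standard commutative algebra, whereas the paper's argument stays inside the fractional-ideal calculus it has already set up; there is no circularity in your version, since everything you cite precedes the lemma.
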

\begin{proof}
    First assume that $I\subseteq S$.
    If $I=S$ then the statement is true.
    So assume that $I \subset S$.
    Put $\frm=\p S_\p$ and $\frm^{-1}=(S_\p : \frm)$.
    By Lemma \ref{lemma:loc_inv_princ} we have that $\frm$ is principal.
    Also, $I$ contains a product of primes of~$S$ by Lemma~\ref{lemma:Icontprodprimes}.
    Hence the localization $I_\p$ contains a power of $\frm$, say~$I_\p\supseteq \frm^n$ with~$n$ minimal.
    We will now show that $I_\p=\frm^n$ by induction.
    If $n=1$ then~$I_\p=\frm$.
    On the other hand, if $n>1$, since $\frm$ is invertible in $S_\p$, then~$I_\p\frm^{-1} \supseteq \frm^{n-1}$.
    By induction we obtain $I_\p\frm^{-1} = \frm^{n-1}$, and hence~$I_\p = \frm^n$.
    Since $I_\p$ is a power of a principal ideal, then it is principal as well.

    Now consider the general case.
    Let $x$ be a non-zero divisor in $(S:I)$ which exists by Lemma \ref{lemma:frac_idls}.\ref{lemma:frac_idls:eltofZ}.
    The previous argument shows that $xI$ is principal, say $xI=yS$.
    Then $I=(y/x)S$ and we conclude.
\end{proof}

\begin{lemma}\label{lemma:finmanyprimes}
    The following statements hold:
    \begin{enumerate}[(i)]
        \item \label{lemma:finmanyprimes:invDVR}
            Let $\p$ be a prime of $S$ and $\frf=(S:\cO_K)$ be the conductor of $S$. Then the following statements are equivalent:
            \begin{enumerate}[(a)]
                \item \label{invDVR:1} $\p$ is coprime to $\frf$, that is, $\p+\frf=S$.
                \item \label{invDVR:2} $S_\p=\cO_{K,\p}$, where $\cO_{K,\p}$ is the localization of $\cO_K$ at $\p$.
                \item \label{invDVR:3} the $S/\p$-vector space $\cO_K/\p \cO_K$ has dimension one.
                \item \label{invDVR:4} $\p$ is invertible.
            \end{enumerate}
        \item \label{lemma:finmanyprimes:finmanynotinv} There are finitely many primes of $S$ which are not invertible.
    \end{enumerate}
\end{lemma}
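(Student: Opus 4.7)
The plan for (i) is to use (b) as the hub and establish each of (a), (c), (d) equivalent to it. For (a) $\Rightarrow$ (b), I would write $1 = p + f$ with $p \in \p$ and $f \in \frf$; then $f \notin \p$ is a unit in $S_\p$, and $f\cO_K \subseteq S$ localizes to $f\cO_{K,\p} \subseteq S_\p$, forcing $\cO_{K,\p} \subseteq S_\p$. Conversely, under (b), the finitely generated $S$-module $\cO_K/S$ vanishes upon localization at $\p$, so some $s \in S \setminus \p$ annihilates it, giving $s \in \frf \setminus \p$ and thus $\p + \frf = S$.

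For (b) $\Leftrightarrow$ (c), the key will be the identification $\cO_K/\p\cO_K = \cO_{K,\p}/\p\cO_{K,\p}$, which holds because both sides are annihilated by $\p$ so localizing at $\p$ acts trivially on them. Assuming (b), the right-hand side becomes $S_\p/\p S_\p \cong S/\p$, a one-dimensional $S/\p$-vector space, so (c) follows. For the converse, dimension one yields $\cO_{K,\p} = S_\p + \p\cO_{K,\p}$; I would then apply Nakayama's lemma to the finitely generated $S_\p$-module $N = \cO_{K,\p}/S_\p$, verifying $N = (\p S_\p) N$ to conclude $N = 0$ and hence (b).

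The equivalence (b) $\Leftrightarrow$ (d) will combine Lemmas \ref{lemma:loc_inv_princ} and \ref{lemma:pinvpowerofp}. If (b) holds, then $S_\p = \cO_{K,\p}$ is local; using the idempotent decomposition $\cO_K = \prod \cO_{K_i}$ (where the $K_i$ are the field factors of $K$), locality of $\cO_{K,\p}$ forces a single prime of $\cO_K$ over $\p$ and collapses the semi-localization to a DVR. Thus $\p S_\p$ is principal; combined with $\p_\q = S_\q$ for $\q \neq \p$ (by maximality of both), Lemma \ref{lemma:loc_inv_princ} gives (d). Conversely, assuming (d), Lemma \ref{lemma:pinvpowerofp} applied to the fractional $S$-ideal $\cO_K$ produces $\cO_{K,\p} = y S_\p$ for a non-zero-divisor $y$; then $y^2 \in y S_\p$ forces $y \in S_\p$, so $\cO_{K,\p} \subseteq S_\p$ and (b) follows.

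For part (ii), the equivalence (a) $\Leftrightarrow$ (d) just proved identifies the non-invertible primes as precisely those containing the conductor $\frf$. When $S \neq \cO_K$, $\frf$ is a fractional $S$-ideal strictly contained in $S$, and Lemma \ref{lemma:finmanyprimes:Iinp} bounds their number; the case $S = \cO_K$ is trivial. I expect the hardest step to be (c) $\Rightarrow$ (b), where both the module identification $\cO_K/\p\cO_K = \cO_{K,\p}/\p\cO_{K,\p}$ and the Nakayama setup on $\cO_{K,\p}/S_\p$ must be handled carefully; a secondary subtlety is establishing the DVR structure of $\cO_{K,\p}$ in (b) $\Rightarrow$ (d) via the idempotent decomposition of $\cO_K$.
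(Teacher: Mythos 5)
Your proposal is correct and takes essentially the same route as the paper: (b) serves as the hub, (b)$\Leftrightarrow$(c) is Nakayama, (b)$\Rightarrow$(d) goes through principality of $\p S_\p$ together with Lemma \ref{lemma:loc_inv_princ}, and part (ii) reduces to primes containing the conductor, which are finite in number by Lemma \ref{lemma:finmanyprimes:Iinp}. The only real deviation is in (d)$\Rightarrow$(b), where you apply Lemma \ref{lemma:pinvpowerofp} to $\cO_K$ and cancel the non-zero divisor $y$ via $y^2\in yS_\p$, while the paper applies it to the conductor $\frf$ and uses Lemma \ref{lemma:invmultring} together with $(\frf:\frf)=\cO_K$; both arguments are equally valid.
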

\begin{proof}
    We first prove Part \ref{lemma:finmanyprimes:invDVR}.
    Observe that $\p$ is coprime to $\frf$ if and only if $\frf_\p=S_\p$, which is equivalent to $S_\p=\cO_{K,\p}$ by Lemma \ref{lemma:loc_idl_operations}.
    Hence $\ref{invDVR:1}~\Leftrightarrow~\ref{invDVR:2}$.
    Also, it is an immediate consequence of Nakayama's Lemma that $\ref{invDVR:2}~\Leftrightarrow~\ref{invDVR:3}$.
    Now, assume \ref{invDVR:2}.
    Then $\p S_\p$ is invertible.
    Since all the localizations of $\p$ at the other primes of $S$ are trivial, by Lemma \ref{lemma:loc_inv_princ} we get that $\p$ is invertible, that is \ref{invDVR:4} holds.
    Conversely, assume \ref{invDVR:4}.
    By Lemma \ref{lemma:pinvpowerofp} $\frf_\p$ is invertible.
    Hence its multiplicator ring equals $S_\p$ by Lemma \ref{lemma:invmultring}.
    But it is easy to see that the multiplicator ring of $\frf$ contains, and hence equals, $\cO_{K}$.
    Therefore we conclude that $S_\p=\cO_{K,\p}$, that is, \ref{invDVR:2} holds.

    Finally, by Part \ref{lemma:finmanyprimes:invDVR} we have that the non-invertible primes of $S$ are precisely the ones containing the conductor $\frf$ of $S$, and by Lemma \ref{lemma:finmanyprimes:Iinp} there is only finitely many of them, which gives us Part \ref{lemma:finmanyprimes:finmanynotinv}.
\end{proof}

Noticeably, in Lemma \ref{lemma:loc_idl_operations} we have not mentioned the trace duals.
The reason is that the in general the localization $I_\p$ of a fractional $S$-ideal $I$ is not a $Z_p$-lattice in $\cQ(S_\p)$, as the next lemma shows.
\begin{lemma}\label{lemma:SpfgoverZp}
    Let $\p$ be a prime of $S$ and $p$ its contraction in $Z$.
    The ring $S_\p$ is a $Z_p$-lattice if and only if $\p$ is the unique prime of $S$ above $p$.
\end{lemma}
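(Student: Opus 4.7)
The plan is to pass through the intermediate ring $S_{(p)} := S \otimes_Z Z_p$, which is the localization of $S$ at the multiplicative set $Z\setminus p$. Since $S$ is a $Z$-lattice in $K$, tensoring with $Z_p$ produces a $Z_p$-lattice in $K$, so $S_{(p)}$ is always a $Z_p$-lattice; standard localization theory identifies the maximal ideals of the semi-local ring $S_{(p)}$ with the primes $\p_1,\ldots,\p_s$ of $S$ lying above $p$.

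For the implication $(\Leftarrow)$, if $\p$ is the unique prime above $p$, then $S_{(p)}$ is already a local ring with maximal ideal $\p S_{(p)}$, so the further localization at this maximal ideal is trivial and $S_\p = S_{(p)}$. Hence $S_\p$ inherits the $Z_p$-lattice property from $S_{(p)}$.

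For the implication $(\Rightarrow)$, I would suppose there exists a second prime $\p' \neq \p$ above $p$, and split the argument according to whether every minimal prime of $S$ is contained in $\p$. If some minimal prime $P_j$ of $S$ is not contained in $\p$, then Lemma~\ref{lemma:loc_ring_quot} shows that $\cQ(S_\p)$ is a proper quotient of $K$; combining this with $S_\p\otimes_{Z_p} Q = \cQ(S_\p)$ yields $\dim_Q(S_\p\otimes_{Z_p} Q) < \dim_Q K$, so $S_\p$ cannot be a $Z_p$-lattice. If, on the other hand, every minimal prime of $S$ sits in $\p$, then every element of $S_{(p)}\setminus \p S_{(p)}$ avoids all minimal primes and is therefore a non-zero-divisor, which makes the localization map $S_{(p)}\hookrightarrow S_\p$ injective. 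Pick $x\in \p'\setminus \p$; it is a unit in $S_\p$ but lies in the proper maximal ideal $\p' S_{(p)}$ of $S_{(p)}$. Assuming for contradiction that $S_\p$ is finitely generated over the Noetherian ring $Z_p$, the ascending chain of $Z_p$-submodules of $S_\p$ generated by $1, x^{-1}, x^{-2},\ldots$ must stabilize, producing a relation
\[ x^{-N} = a_0 + a_1 x^{-1} + \cdots + a_{N-1} x^{-(N-1)} \]
with $a_i \in Z_p$. Multiplying by $x^N$ and transporting the identity back into $S_{(p)}$ through the injection yields
\[ 1 = a_0 x^N + a_1 x^{N-1} + \cdots + a_{N-1} x \in \p' S_{(p)}, \]
contradicting the properness of the maximal ideal $\p' S_{(p)}$.

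The main obstacle is the second subcase of the converse: one must exploit the injection $S_{(p)}\hookrightarrow S_\p$ in just the right way, using that the chosen $x$ is a unit in $S_\p$ but a non-unit in $S_{(p)}$, so that it can be played off against the maximality of a \emph{different} prime $\p' S_{(p)}$ to force the contradiction.
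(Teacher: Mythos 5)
Your proof is correct, and while the backward direction is exactly the paper's (when $\p$ is the only prime above $p$, the natural map $S\otimes_Z Z_p\to S_\p$ is an isomorphism, so finite generation is inherited), your forward direction takes a genuinely different route. The paper argues in one stroke: pick $x\in\p'\setminus\p$, regard $1/x\in S_\p$, and claim its minimal polynomial over $Q$ has denominators at $p$, so $1/x$ is not integral over $Z_p$ and $S_\p$ cannot be a finitely generated $Z_p$-module. Your subcase of the converse in which all minimal primes lie in $\p$ reaches the same integrality obstruction, but replaces the minimal-polynomial claim by the Noetherian chain argument and the contradiction $1\in\p'(S\otimes_Z Z_p)$, and it is airtight precisely because you first verify that $S\otimes_Z Z_p\to S_\p$ is injective in that situation. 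Your other subcase has no counterpart in the paper and is exactly the configuration the paper's one-liner glosses over: if some minimal prime is not contained in $\p$, then $K\twoheadrightarrow\cQ(S_\p)$ from Lemma~\ref{lemma:loc_ring_quot} is not injective, ``$1/x$'' must be read in $\cQ(S_\p)$, and the minimal-polynomial claim can fail there; e.g.\ for $S=Z\times Z$ in $K=Q\times Q$ and $\p=p\times Z$ one has $S_\p\simeq Z_p$, which is finitely generated over $Z_p$ and a perfectly good lattice in $\cQ(S_\p)=Q$ although there are two primes above $p$. In that configuration the lemma survives only under the reading you implicitly adopt, namely that a $Z_p$-lattice must have full rank $\dim_Q(K)$, and your dimension count $\dim_Q(S_\p\otimes_{Z_p}Q)=\dim_Q\cQ(S_\p)<\dim_Q(K)$ is then the right way to conclude; under the strict ``lattice in $\cQ(S_\p)$'' reading suggested by the sentence preceding the lemma, no argument could close this case. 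So your extra case distinction is not wasted caution: it both repairs the loose spot in the paper's proof (under the full-rank reading) and makes explicit where the injectivity that the paper tacitly uses actually comes from, at the cost of a longer argument than the paper's single integrality computation.
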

\begin{proof}
    Assume that there exists another prime $\p'$ containing $p$.
    Pick $x\in \p'\setminus \p$, and consider $1/x\in S_\p$.
    Then the minimal polynomial of $1/x$ over $Q$ has denominators in $pZ$ and hence $1/x$ is not integral over $Z_p$.
    Hence $S_\p$ is not finitely generated over $Z_p$.

    Conversely, if $\p$ is the unique prime above $p$, then the natural map
    \[ S \otimes_{Z} Z_p \to S_\p\]
    is an isomorphism.
    In particular, since $S \otimes_{Z} Z_p$ is a finitely generated $Z_p$-module, then $S_\p$ is the same.
\end{proof}

\subsection{Completions}\
Let $Z$ be a Dedekind domain with field of fractions~$Q$.
Let~$S$ be a $Z$-order in a non-zero \'etale $Q$-algebra $K$.
For a prime $\p$ of $S$ we denote by $\hat{S}_\p$ the completion of $S$ at $\p$.
Given a fractional $S$-ideal $I$ we put~$\hat I_\p = I\otimes_S\hat S_\p$.

\begin{lemma}\label{lemma:completion}
    Let $\p$ be a prime of $S$ and $p$ its contraction to $Z$.
    The total ring of quotients $\cQ(\hat{S}_\p)$ is an \'etale $\cQ(\hat Z_p)$-algebra.
    The completion $\hat{S}_\p$ is a $\hat Z_p$-order in $\cQ(\hat{S}_\p)$.
    Given a fractional $S$-ideal $I$, we have that $\hat I_\p$ is a fractional $\hat S_\p$-ideal.
\end{lemma}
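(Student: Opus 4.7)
The plan is to reduce all three statements to a single structural fact about $S\otimes_Z\hat Z_p$. Since $S$ is finite over $Z$, the ring $S\otimes_Z\hat Z_p$ is module-finite over the complete discrete valuation ring $\hat Z_p$, hence semi-local and complete in its Jacobson topology; its maximal ideals correspond bijectively to the primes of $S$ above $p$. Alternatively, $S\otimes_Z\hat Z_p=\varprojlim_n S/p^nS$, and each $S/p^n S$ splits by CRT into a product indexed by the primes $\p'\mid p$, so
\[ S\otimes_Z\hat Z_p \;\simeq\; \prod_{\p'\mid p}\hat S_{\p'}, \]
where the factor at $\p$ is $\hat S_\p$ because the $p$-adic and $\p$-adic topologies agree on $S_\p$ (the ideal $pS_\p$ is $\p S_\p$-primary since $\p$ is the only maximal ideal above $p$ in $S_\p$). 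Note that this identity holds for every $\p$, even though by Lemma~\ref{lemma:SpfgoverZp} $S_\p$ itself need not be a $Z_p$-lattice: behavior stabilizes only after completion.

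Base-changing along $\hat Z_p\to\cQ(\hat Z_p)$ gives
\[ K\otimes_Q\cQ(\hat Z_p) \;=\; S\otimes_Z\cQ(\hat Z_p) \;\simeq\; \prod_{\p'\mid p}\bigl(\hat S_{\p'}\otimes_{\hat Z_p}\cQ(\hat Z_p)\bigr). \]
The left side is \'etale over $\cQ(\hat Z_p)$ because \'etaleness is preserved under base change, so each factor on the right is a finite product of finite separable extensions of $\cQ(\hat Z_p)$. I next identify the $\p$-factor with $\cQ(\hat S_\p)$. Applying the same decomposition to $\cO_K$ embeds $\hat S_\p$ into $\cO_K\otimes_Z\hat Z_p$, a product of complete DVRs each finite free over $\hat Z_p$; hence $\hat S_\p$ is a finite torsion-free, therefore free, $\hat Z_p$-module, and nonzero elements of $\hat Z_p$ are nonzerodivisors in $\hat S_\p$, giving $\hat S_\p\otimes_{\hat Z_p}\cQ(\hat Z_p)\subseteq\cQ(\hat S_\p)$. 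Conversely, integrality of $\hat S_\p$ over $\hat Z_p$ forces the minimal primes of $\hat S_\p$ to be exactly those contracting to $(0)\subset\hat Z_p$, and these correspond bijectively to the (minimal, equivalently maximal) ideals of $\hat S_\p\otimes_{\hat Z_p}\cQ(\hat Z_p)$; hence every nonzerodivisor of $\hat S_\p$ becomes a unit after base change, so $\cQ(\hat S_\p)\subseteq\hat S_\p\otimes_{\hat Z_p}\cQ(\hat Z_p)$. The resulting equality proves simultaneously that $\cQ(\hat S_\p)$ is \'etale over $\cQ(\hat Z_p)$ and that $\hat S_\p$ is finite over $\hat Z_p$ and contains a $\cQ(\hat Z_p)$-basis of $\cQ(\hat S_\p)$, i.e., is a $\hat Z_p$-order.

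The same decomposition applied to the finite $Z$-module $I$ yields $I\otimes_Z\hat Z_p\simeq\prod_{\p'\mid p}\hat I_{\p'}$, so $\hat I_\p$ is a direct summand of a finitely generated $\hat Z_p$-module, hence finitely generated. Flatness of $\hat S_\p$ over $S$, combined with the identification $K\otimes_S\hat S_\p=\cQ(\hat S_\p)$ that falls out of the previous paragraph, realizes $\hat I_\p$ as a finitely generated sub-$\hat S_\p$-module of $\cQ(\hat S_\p)$; a $Q$-basis of $K$ contained in the lattice $I$ transports to a $\cQ(\hat Z_p)$-basis of $\cQ(\hat S_\p)$ sitting inside $\hat I_\p$, so $\hat I_\p$ is a $\hat Z_p$-lattice and therefore a fractional $\hat S_\p$-ideal. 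The main obstacle is setting up the decomposition in the first paragraph and the identification $\cQ(\hat S_\p)=\hat S_\p\otimes_{\hat Z_p}\cQ(\hat Z_p)$ without assuming $\p$ is the unique prime above $p$; once these are in place, everything else is formal.
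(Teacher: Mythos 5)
Your argument is correct, and its skeleton is the same as the paper's: everything is reduced to the decomposition $S\otimes_Z\hat Z_p\simeq\prod_{\p'\mid p}\hat S_{\p'}$, which is then base-changed along $\hat Z_p\to\cQ(\hat Z_p)$ for the \'etale statement and tensored with $I$ for the fractional-ideal statement. The differences are in how the two inputs are handled. The paper simply cites Bourbaki for the decomposition of the complete semilocal ring $S\otimes_Z\hat Z_p$, and gets \'etaleness by observing that applying $-\otimes_Z Q$ yields an algebra on which the induced trace form is non-degenerate; you instead prove the decomposition by hand (CRT on $S/p^nS$ plus the equality of the $p$-adic and $\p$-adic topologies on $S_\p$, which is correct since $\p S_\p$ is the radical of $pS_\p$), and you obtain \'etaleness from preservation under base change together with the identification $\cQ(\hat S_\p)=\hat S_\p\otimes_{\hat Z_p}\cQ(\hat Z_p)$. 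What your route buys is precisely that identification, which the paper leaves implicit and which is genuinely needed to know that the factors of $K\otimes_Q\cQ(\hat Z_p)$ are the total quotient rings $\cQ(\hat S_{\p'})$; what the paper's route buys is brevity and an explicit record of the non-degenerate trace form on the completion (used tacitly later for $\hat S_\p^t$), though this also follows from your \'etaleness conclusion. Two small points to tighten: the claim that the minimal primes of $\hat S_\p$ are exactly the primes contracting to $(0)$ is not a consequence of integrality alone --- one direction uses the torsion-freeness (freeness) over $\hat Z_p$ you had just established, the other uses incomparability; and the images in the factor $\cQ(\hat S_\p)$ of a $Q$-basis of $K$ contained in $I$ need only span rather than form a basis, from which one extracts a basis. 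Both are immediately repairable with what you already have.
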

\begin{proof}
    By \cite[Cor.~Ch.III~\S.13~p.195]{BourbakiCommAlgCh17eng}, since $S\otimes_Z \hat Z_p$ is a complete semilocal ring, we have an isomorphism
    \begin{equation}\label{eq:completioniso}
        S\otimes_Z \hat{Z_p} \simeq \hat{S}_{\p_1}\oplus \ldots \oplus \hat{S}_{\p_s},
    \end{equation}
    where $\p_1,\ldots,\p_s$ are the primes of $S$ above $p$.
    Applying $-\otimes_Z Q$ to this isomorphism shows that $\cQ(\hat{S}_{\p_i})$ is a $\cQ(\hat Z_p)$-algebra for each $i$, such that the induced trace map gives a non-degenerate bilinear form.
    On the other hand, applying $-\otimes_S I$ shows that $\hat I_{\p_i}$ is a fractional $\hat S_{\p_i}$-ideal for each $i$.
\end{proof}

\begin{lemma}\label{lemma:completion_inv}
    Let $I$ be fractional $S$-ideal. Then the following are equivalent.
    \begin{enumerate}[(i)]
        \item $I$ is invertible.
        \item $\hat I_\p$ is a principal fractional $\hat S_\p$-ideal for every prime $\p$ of $S$.
    \end{enumerate}
\end{lemma}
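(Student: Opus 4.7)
The plan is to route through Lemma \ref{lemma:loc_inv_princ}, which already identifies invertibility of $I$ with principality of every localization $I_\p$ as a sub-$S_\p$-module of $\cQ(S_\p)$. Modulo this reduction, the lemma becomes the prime-by-prime statement: for each prime $\p$ of $S$, the fractional $S_\p$-ideal $I_\p$ is principal if and only if the fractional $\hat S_\p$-ideal $\hat I_\p$ is principal. One could alternatively argue by tensoring $I(S:I)=S$ directly with $\hat S_\p$ and checking that colons commute with completion, but the route through Lemma \ref{lemma:loc_inv_princ} avoids setting up that auxiliary lemma.

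To prove the prime-by-prime claim, I would apply Nakayama's lemma in both local rings $S_\p$ and $\hat S_\p$, using that they share the residue field $S/\p$. The completion map $S_\p \to \hat S_\p$ is faithfully flat, and by the construction recalled in Lemma \ref{lemma:completion} together with the identification $\hat I_\p = I\otimes_S \hat S_\p \simeq I_\p \otimes_{S_\p}\hat S_\p$, there is a natural isomorphism of $S/\p$-vector spaces
\[
\hat I_\p / \p\hat I_\p \;\simeq\; (I_\p/\p I_\p) \otimes_{S/\p} (\hat S_\p / \p\hat S_\p) \;\simeq\; I_\p/\p I_\p.
\]
By Nakayama, $\hat I_\p$ is generated by a single element over $\hat S_\p$ if and only if the dimension above is at most $1$, if and only if $I_\p$ is generated by a single element over $S_\p$.

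The remaining step is to upgrade ``generated by a single element'' to ``principal'' in the sense of the paper, namely with a generator that is a unit of the respective total ring of quotients. For this, note that both $I_\p$ and $\hat I_\p$ are fractional ideals (the second by Lemma \ref{lemma:completion}), hence contain a non-zero-divisor $z$ of the total quotient ring. If such a module is singly generated, say by $y$, then $z=yr$ and, in a commutative ring, a non-zero-divisor factored as a product forces each factor to be a non-zero-divisor; hence $y$ is a unit of $\cQ(S_\p)$ (respectively $\cQ(\hat S_\p)$). I do not expect a serious obstacle in this argument: the main ingredients, namely Lemma \ref{lemma:loc_inv_princ}, faithful flatness of completion for Noetherian local rings, and the identification of residue fields, are all standard.
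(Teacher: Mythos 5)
Your proposal is correct and follows essentially the same route as the paper: reduce to Lemma \ref{lemma:loc_inv_princ} and use the canonical identification of $I/\p I\simeq I_\p/\p I_\p$ with $\hat I_\p/\p\hat I_\p$ over the common residue field, together with Nakayama. The only difference is cosmetic (you use the principal-localization criterion where the paper uses the dimension-one criterion) plus your explicit, and correct, upgrade from ``singly generated'' to ``principal'' via a non-zero-divisor, a step the paper leaves implicit.
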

\begin{proof}
    Observe that we have a canonical isomorphism
    \[ \frac{I}{\p I} \simeq \frac{\hat I_\p}{\p \hat I_\p} \]
    of $S/\p\simeq \hat S_\p/\p \hat S_\p$-vector spaces.
    The result follows from Lemma \ref{lemma:loc_inv_princ}.
\end{proof}

The next lemma, together with Equation \eqref{eq:Homtracedual}, shows that taking trace duals commutes with completion.
In particular, for a fractional $S$-ideal $I$ and a prime $\p$ of $S$, the notation $\hat I_\p^t$ is not ambiguous.
\begin{lemma}\label{lemma:completion_trace}
    Given a fractional $S$-ideal $I$ and a prime $\p$ of $S$ with contraction $p$ in $Z$, we have a natural $\hat S_\p$-linear isomorphism
    \[ \Hom_Z (I,Z)\otimes_S \hat S_\p \simeq \Hom_{\hat Z_p}(\hat I_\p,\hat Z_p). \]
\end{lemma}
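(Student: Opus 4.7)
The plan is to reduce the claim to the standard fact that $\Hom$ commutes with flat base change along $Z \to \hat Z_p$, and then use the semilocal decomposition \eqref{eq:completioniso} to project onto the component corresponding to $\p$. First, I would note that $I$ is a finitely generated $Z$-module (being a $Z$-lattice in $K$) and that $Z$ is Noetherian, so $I$ is finitely presented over $Z$. Since the completion $\hat Z_p$ at the maximal ideal $p$ of the Noetherian ring $Z$ is flat over $Z$, the usual base change for $\Hom$ of finitely presented modules yields an $(S \otimes_Z \hat Z_p)$-linear isomorphism
\[ \Hom_Z(I, Z) \otimes_Z \hat Z_p \;\simeq\; \Hom_{\hat Z_p}(I \otimes_Z \hat Z_p,\, \hat Z_p). \]

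Next, by \eqref{eq:completioniso} we have $S \otimes_Z \hat Z_p \simeq \bigoplus_{i=1}^s \hat S_{\p_i}$, where $\p = \p_1, \dots, \p_s$ are the primes of $S$ above $p$. Tensoring this decomposition over $S$ with $I$ gives $I \otimes_Z \hat Z_p \simeq \bigoplus_{i=1}^s \hat I_{\p_i}$, so the previous isomorphism turns into
\[ \Hom_Z(I, Z) \otimes_Z \hat Z_p \;\simeq\; \bigoplus_{i=1}^s \Hom_{\hat Z_p}(\hat I_{\p_i},\, \hat Z_p). \]

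Finally, I would apply $-\otimes_{S \otimes_Z \hat Z_p} \hat S_\p$ to both sides. On the left, associativity of tensor product together with the identification $\Hom_Z(I,Z) \otimes_Z \hat Z_p \simeq \Hom_Z(I,Z) \otimes_S (S \otimes_Z \hat Z_p)$ yields $\Hom_Z(I,Z) \otimes_S \hat S_\p$, which is exactly the left-hand side we want. On the right, the $(S \otimes_Z \hat Z_p)$-action on $\Hom_{\hat Z_p}(\hat I_{\p_i}, \hat Z_p)$ factors through the projection to the $i$-th factor $\hat S_{\p_i}$, so tensoring with $\hat S_\p$ kills every summand except the one indexed by $\p$, producing $\Hom_{\hat Z_p}(\hat I_\p, \hat Z_p)$. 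The only delicate point I foresee is checking that the module structures match throughout—in particular, that the $S$-action on $\Hom_Z(I, Z)$ coming from composition with multiplication on $I$ coincides, under all the identifications, with the action used to pick out the $\p$-summand on the right. This is essentially bookkeeping and I do not expect any genuine obstacle.
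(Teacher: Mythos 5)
Your proposal is correct and follows essentially the same route as the paper: flat base change for $\Hom$ along $Z\to\hat Z_p$, then the semilocal decomposition \eqref{eq:completioniso} to isolate the $\p$-component. Tensoring with $\hat S_\p$ over $S\otimes_Z\hat Z_p$ is just the idempotent projection the paper uses when it matches the direct-sum decompositions of the two sides, so the arguments coincide.
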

\begin{proof}
    Since $\hat Z_p$ is a flat $Z$-algebra we get a natural isomorphism
    \begin{equation}\label{eq:completionflat}
        \Hom_Z (I,Z)\otimes_Z \hat Z_p \simeq \Hom_{\hat Z_p} (I\otimes_Z \hat Z_p,\hat Z_p).
    \end{equation}
    By Equation \eqref{eq:completioniso} we get the following decompositions of $S$-modules
    \[ \Hom_Z (I,Z)\otimes_Z \hat Z_p \simeq \bigoplus_{i=1}^s \left( \Hom_Z (I,Z)\otimes_S \hat S_{\p_i} \right) \]
    and
    \[ \Hom_{\hat Z_p} (I\otimes_Z \hat Z_p,\hat Z_p) \simeq \bigoplus_{i=1}^s \Hom_{\hat Z_p} (\hat I_{\p_i},\hat Z_p), \]
    where $\p_1,\ldots,\p_s$ are the primes of $S$ above $p$.
    Since they are both naturally induced by same isomorphism from Equation \eqref{eq:completioniso}, it is not hard to see that the isomorphism in Equation \eqref{eq:completionflat} decompososes into a direct sum of isomorphisms
    \[ \Hom_Z (I,Z)\otimes_S \hat S_{\p_i} \simeq \Hom_{\hat Z_p}(\hat I_{\p_i},\hat Z_p), \]
    as required.
\end{proof}

\section{Cohen-Macaulay type of an order}
\label{sec:CMtypeorder}
We start this section by recalling some definitions.
We refer the reader to~\cite{BH93} for more details.
Let $R$ be a Cohen-Macaulay local ring with residue field $k$.
A finitely generated $R$-module $C$ is called a {\it canonical module} if $\dim_k \Ext_R^i(k,C)$ is $1$ for $i=\dim R$ and $0$ otherwise.
All canonical modules of $R$, if any, are isomorphic.
The {\it type} of $R$ is defined as $\dim_k\Ext_R^{\dim R}(k,R)$.
In this section we will see that it is easy to compute the type for the completion of an order at a prime.
Furthermore, we will define a global notion of type for an order and we will start studying its properties.

Let $Z$ be a Dedekind domain with field of fractions $Q$.
Let $K$ be a non-zero $Q$-\'etale algebra and let $S$ be an order in $K$.

\newpage
\begin{prop}\label{prop:can_mod_and_type}
    Let $\p$ be a prime of $S$.
    Then
    \begin{enumerate}[(i)]
        \item $\hat S^t_\p$ is a canonical module of $\hat S_\p$.
        \item The type of $\hat S_\p$ equals
        \[ \dim_{S/\p}\frac{S^t}{\p S^t}. \]
    \end{enumerate}
\end{prop}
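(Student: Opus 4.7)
My plan is to prove (i) by identifying $\hat S^t_\p$ with $\Hom_{\hat Z_p}(\hat S_\p, \hat Z_p)$ and invoking the standard change-of-rings formula for canonical modules; part (ii) will then follow from the general identity relating the type to the minimal number of generators of the canonical module, combined with a triviality check that trace duality and completion at $\p$ commute modulo $\p$.

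For (i), I would first apply the $S$-linear isomorphism $S^t \simeq \Hom_Z(S,Z)$ from \eqref{eq:Homtracedual} together with Lemma \ref{lemma:completion_trace} (taking $I=S$) to obtain an $\hat S_\p$-linear isomorphism $\hat S^t_\p \simeq \Hom_{\hat Z_p}(\hat S_\p, \hat Z_p)$. The ring $\hat Z_p$ is a complete DVR, hence a regular (and so Gorenstein) local ring of dimension one, with canonical module $\omega_{\hat Z_p}=\hat Z_p$. By Lemma \ref{lemma:completion}, $\hat S_\p$ is a $\hat Z_p$-order, so it is module-finite over $\hat Z_p$ and is itself a one-dimensional Cohen-Macaulay local ring. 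The standard change-of-rings formula for canonical modules (see, e.g., \cite[Thm.~3.3.7]{BH93}) then yields $\omega_{\hat S_\p}\simeq \Hom_{\hat Z_p}(\hat S_\p, \omega_{\hat Z_p}) = \Hom_{\hat Z_p}(\hat S_\p, \hat Z_p)\simeq \hat S^t_\p$, which is the content of (i).

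For (ii), I would combine part (i) with the general identity $\type{R}=\mu(\omega_R)=\dim_k(\omega_R/\frm\omega_R)$, valid for any Cohen-Macaulay local ring $(R,\frm,k)$ admitting a canonical module (see, e.g., \cite[Prop.~3.3.11]{BH93}). Applied to $R=\hat S_\p$ with $\omega_R = \hat S^t_\p$ and residue field $k=\hat S_\p/\p\hat S_\p = S/\p$, this gives $\type{\hat S_\p}=\dim_k \hat S^t_\p/\p\hat S^t_\p$. To close the loop, I would observe that $S^t/\p S^t$ is a finite-dimensional $S/\p$-vector space (it is annihilated by $\p$ and $S^t$ is finitely generated over $S$), so both localization and completion at $\p$ leave it unchanged, giving the natural identification $\hat S^t_\p/\p\hat S^t_\p \simeq S^t/\p S^t$.

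The main potential obstacle is verifying that $\hat S_\p$ is a one-dimensional Cohen-Macaulay extension of $\hat Z_p$ which is module-finite, since this is exactly what permits the change-of-rings formula to produce a canonical module in the desired form. However, this is already handled by Lemma \ref{lemma:completion}: $\hat S_\p$ being a $\hat Z_p$-order secures module-finiteness and dimension one, while the Cohen-Macaulay property is preserved under localization and completion from the Cohen-Macaulay order $S$. This is why the preparatory work on trace duals commuting with completion was front-loaded into the preliminaries; beyond that, the argument reduces to bookkeeping with two standard facts from the theory of canonical modules.
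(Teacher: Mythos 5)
Your proposal is correct and follows essentially the same route as the paper: identify $\hat S^t_\p$ with $\Hom_{\hat Z_p}(\hat S_\p,\hat Z_p)$ via the trace pairing (Equation \eqref{eq:Homtracedual} and Lemma \ref{lemma:completion_trace}), apply the change-of-rings result \cite[Thm.~3.3.7]{BH93} with $\hat Z_p$ a DVR serving as its own canonical module, and deduce (ii) from \cite[Prop.~3.3.11]{BH93} together with Nakayama and the identification $\hat S^t_\p/\p\hat S^t_\p\simeq S^t/\p S^t$. The only cosmetic difference is that you cite Lemma \ref{lemma:completion_trace} explicitly where the paper leaves the completion-compatibility implicit.
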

\begin{proof}
    Let $p$ be the contraction of $\p$ in $Z$.
    Since $\hat Z_p$ is a DVR, it is easy to see from the definition that $\hat Z_p\simeq \Hom_{\hat Z_p}(\hat Z_p,\hat Z_p)$ is a canonical module of~$\hat Z_p$.
    Since $Z$ and $S$ have both Krull dimension $1$, from \cite[Thm.~3.3.7.(b)]{BH93} we get that $C=\Hom_{\hat Z_p}(\hat S_\p,\hat Z_p)$ is a canonical module of $\hat S_\p$.
    To conclude, it is enough to observe that $C$ is canonically isomorphic to $\hat S_\p^t$ by Equation~\eqref{eq:Homtracedual}.
    See also \cite[3.4]{JensenThorup15}.
    For the second part, observe that by Nakayama's lemma a minimal generating set of $\hat S^t_\p$ over $\hat S_\p$ has size equal to the dimension of $\hat S^t_\p/\p \hat S^t_\p\simeq S^t/\p S^t$ as a vector space over $\hat S_\p /\p \hat S_\p\simeq S/\p$.
    The statement then follows from \cite[Prop.~3.3.11.(c).(i)]{BH93}.
\end{proof}

Proposition \ref{prop:can_mod_and_type} tells us that the terminology used in the following definition is compatible with the usual notion of type of a local ring.
\begin{df}
    Let $\p$ be a prime of $S$.
    We say that:
    \begin{enumerate}[(i)]
        \item the {\it (Cohen-Macaulay) type of $S$ at $\p$} is
        \[ \typep{S}:=\dim_{S/\p}\frac{S^t}{\p S^t}. \]
        \item the {\it (Cohen-Macaulay) type of $S$} is
        \[ \type{S}:=\max\set{ \typep{S} : \p \text{ is a prime of }S }. \]
    \end{enumerate}
\end{df}
Note that $\type{S}$ is a finite quantity since $\typep{S}=1$ for almost all primes $\p$ of $S$, as we show in the next Proposition.

\begin{prop}\label{prop:type_at_inv_p}
    Let $\p$ be an invertible prime of $S$.
    Then
     \[ \typep{S}=1. \]
     In particular $\type{S}$ is a finite positive integer.
\end{prop}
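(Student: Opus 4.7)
The plan is to reduce to the case of the localization $S_\p$ and apply Nakayama's lemma. By Proposition \ref{prop:can_mod_and_type} we already have
\[ \typep{S} = \dim_{S/\p} \frac{S^t}{\p S^t}. \]
Since $S_\p$ is local with residue field $S_\p/\p S_\p \simeq S/\p$ and $(S^t)/\p S^t \simeq (S^t)_\p / \p (S^t)_\p$ as $S/\p$-modules, Nakayama's lemma shows that this dimension equals $\gensover{S_\p}{(S^t)_\p}$, the minimal number of generators of $(S^t)_\p$ as an $S_\p$-module.

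The key input is now Lemma \ref{lemma:pinvpowerofp}: when $\p$ is invertible, every fractional $S$-ideal $I$ has principal localization $I_\p$. Applying this with $I = S^t$ (which is indeed a fractional $S$-ideal by Lemma \ref{lemma:frac_idls}.\ref{lemma:frac_idls:operations}) gives that $(S^t)_\p$ is principal over $S_\p$, hence minimally generated by one element. This gives $\typep{S} = 1$, proving the first assertion.

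For the finiteness of $\type{S}$, I would combine this with Lemma~\ref{lemma:finmanyprimes}.\ref{lemma:finmanyprimes:finmanynotinv}, which asserts that only finitely many primes of $S$ are non-invertible. Thus in the definition
\[ \type{S}=\max\set{\typep{S} : \p \text{ is a prime of }S}\]
all but finitely many of the values $\typep{S}$ equal $1$, and each of the finitely many remaining values $\typep{S}=\dim_{S/\p}(S^t/\p S^t)$ is finite since $S^t/\p S^t$ is a finitely generated $S/\p$-module. Hence the maximum is finite. Finally, to see that it is a positive integer, note that $S \subseteq S^t$ (since $S$ is integral over $Z$, so $\Tr(S) \subseteq Z$), so $S^t$ is a nonzero finitely generated $S_\p$-module at any prime~$\p$, and Nakayama forces $\p S^t \subsetneq S^t$, giving $\typep{S}\geq 1$.

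The proof poses no real obstacle: once Proposition~\ref{prop:can_mod_and_type} identifies the type with a minimal generator count and Lemma~\ref{lemma:pinvpowerofp} provides local principality at invertible primes, the conclusion is immediate. The only point requiring a moment's care is the translation $S^t/\p S^t \simeq (S^t)_\p/\p(S^t)_\p$, which follows because $S^t/\p S^t$ is already annihilated by $\p$ and hence supported at $\p$.
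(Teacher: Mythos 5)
Your proof is correct and follows essentially the same route as the paper: apply Lemma~\ref{lemma:pinvpowerofp} to get that $(S^t)_\p$ is principal, translate via Nakayama into $\dim_{S/\p}(S^t/\p S^t)=1$, conclude with Proposition~\ref{prop:can_mod_and_type}, and deduce finiteness of $\type{S}$ from Lemma~\ref{lemma:finmanyprimes}.\ref{lemma:finmanyprimes:finmanynotinv}. The extra details you supply (the identification $S^t/\p S^t \simeq (S^t)_\p/\p(S^t)_\p$ and the positivity via $S\subseteq S^t$) are correct and merely make explicit what the paper leaves implicit.
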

\begin{proof}
    By Lemma \ref{lemma:pinvpowerofp}, $(S^t)_\p$ is a principal $S_\p$-module.
    This means that the $S/\p$-vector space $S^t/\p S^t$ has dimension $1$.
    Hence we conclude by Proposition~\ref{prop:can_mod_and_type}.
    Since there are only finitely many non-invertible primes by Lemma~\ref{lemma:finmanyprimes}.\ref{lemma:finmanyprimes:finmanynotinv}, we get that $\type{S}$ is finite.
\end{proof}
In Proposition \ref{prop:type_bound}, once more machinery will be available, we will exhibit an upper bound for the type of the orders in $K$.

The type is a way to encode how far an order is from being Gorenstein.
Recall that a Noetherian ring is {\it Gorenstein} if all its localizations have finite injective dimension.
There are several well known equivalent characterizations of Gorenstein orders.
We now recall some of them, which play a role later in the paper.
\begin{prop}\label{prop:Gorenstein}
    The following statements are equivalent:
    \begin{itemize}
        \item $S$ is Gorenstein.
        \item $\type{S}=1$.
        \item $S^t$ is invertible.
        \item Every fractional $S$-ideal $I$ with $S=(I:I)$ is invertible.
    \end{itemize}
\end{prop}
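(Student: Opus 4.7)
The plan is to prove the equivalences in the cycle $(1) \Leftrightarrow (2) \Leftrightarrow (3) \Leftrightarrow (4)$, using the machinery of completions and trace duals developed in Section \ref{sec:prelim} and Proposition \ref{prop:can_mod_and_type}.

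For $(1) \Leftrightarrow (2)$, I would unfold the definition of Gorenstein: $S$ is Gorenstein if and only if every localization $S_\p$ has finite injective dimension, which for a Cohen-Macaulay local ring of Krull dimension one is equivalent to type $1$. The type is invariant under completion, so the type of $S_\p$ coincides with the type of $\hat S_\p$, that is, with $\typep{S}$. Thus $S$ is Gorenstein if and only if $\typep{S} = 1$ for every prime $\p$, which is the statement $\type{S} = 1$.

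For $(2) \Leftrightarrow (3)$, I would use the formula $\typep{S} = \dim_{S/\p} S^t/\p S^t$ from Proposition \ref{prop:can_mod_and_type}. The condition $\type{S} = 1$ says exactly that this dimension is $1$ at every prime $\p$. By Lemma \ref{lemma:loc_inv_princ}, this is equivalent to $S^t$ being invertible.

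For $(3) \Leftrightarrow (4)$, the key identity is the trace-colon formula $IJ^t = (J:I)^t$ from Lemma \ref{lemma:frac_idls}.\ref{lemma:frac_idls:colontrace}. Applied to $J = I$, it gives
\[ I \cdot I^t = (I:I)^t. \]
If (3) holds and $(I:I) = S$, then $I \cdot I^t = S^t$, and multiplying by the inverse $(S:S^t)$ of $S^t$ produces a fractional $S$-ideal $J = I^t(S:S^t)$ with $IJ = S$, proving $I$ is invertible. Conversely, assuming (4), I would first verify the auxiliary identity $(L^t : L^t) = (L:L)$ for any fractional $S$-ideal $L$: the inclusion $(L:L) \subseteq (L^t : L^t)$ follows from $\Tr(xL \cdot L^t) \subseteq \Tr(L \cdot L^t) \subseteq Z$ for $x \in (L:L)$, and the reverse by dualizing once more using $(L^t)^t = L$. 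Applying this with $L = S^t$ gives $(S^t : S^t) = S$, so (4) forces $S^t$ itself to be invertible, which is (3).

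The main obstacle is the duality step $(4) \Rightarrow (3)$: it hinges on knowing that $S^t$ has multiplicator ring exactly $S$, which is not completely obvious but falls out cleanly from the symmetry of the trace pairing and $(L^t)^t = L$. Everything else is bookkeeping with the formulas from Section \ref{sec:prelim} together with Proposition \ref{prop:can_mod_and_type}.
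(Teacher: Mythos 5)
Your proposal is correct and follows essentially the same route as the paper: $(1)\Leftrightarrow(2)$ via standard facts on Gorenstein rings and type under completion combined with Proposition \ref{prop:can_mod_and_type}, $(2)\Leftrightarrow(3)$ via Lemma \ref{lemma:loc_inv_princ}, and $(3)\Leftrightarrow(4)$ via the identity $II^t=(I:I)^t$ from Lemma \ref{lemma:frac_idls}.\ref{lemma:frac_idls:colontrace}. Your explicit check that $(S^t:S^t)=S$ just spells out the duality step the paper leaves implicit.
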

\begin{proof}
    By \cite[Proposition 3.1.19(c)]{BH93} we have that $S_\p$ is Gorenstein if and only if $\hat S_\p$ is so.
    Hence, by \cite[Theorem 3.3.7(a)]{BH93}, $S$ is Gorenstein if and only if, for every prime~$\p$ of $S$, the localization $\hat S^t_\p$ is a principal fractional $\hat S_\p$-ideal, which by Proposition~\ref{prop:can_mod_and_type} is equivalent to have $\type{S}=1$.
    The equivalence of the second and third statements follows from Lemmas \ref{lemma:completion_inv} and \ref{lemma:loc_inv_princ}.
    Finally, the last two statements are equivalent
    because $S=(I:I)$ if and only if $II^t=S^t$, which is an application of Lemma \ref{lemma:frac_idls}.\ref{lemma:frac_idls:colontrace}.
    See also \cite[Proposition 2.7]{BuchmannLenstra94}.
\end{proof}

We conclude this section with formulas to compute the type of an order which will be used later in the paper.
\begin{prop}\label{prop:cmtype_multring}
    Let $\p$ be a non-invertible prime of $S$.
    Then
    \[ \typep{S} +1 = \dim_{S/\p}\frac{(\p:\p)}{\p}. \]
\end{prop}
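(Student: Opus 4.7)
The plan is to rewrite the right-hand side using the description $(\p:\p)=(S:\p)$ coming from Lemma \ref{lemma:multringp} (valid because $\p$ is not invertible), and then move from $(S:\p)/\p$ to $S^t/\p S^t$ via the trace–colon identities already established in Lemma \ref{lemma:frac_idls}.

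More concretely, I would first split off the trivial contribution of $S/\p$:
\[
    \dim_{S/\p}\frac{(\p:\p)}{\p} = \dim_{S/\p}\frac{(S:\p)}{\p} = \dim_{S/\p}\frac{(S:\p)}{S} + 1,
\]
using that $\p \subseteq S \subseteq (S:\p)$ (Lemma \ref{lemma:multringp}) and that $S/\p$ is one-dimensional over itself. So it suffices to prove the identity
\[
    \dim_{S/\p}\frac{(S:\p)}{S} \;=\; \dim_{S/\p}\frac{S^t}{\p S^t}.
\]

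To do this I would invoke Lemma \ref{lemma:frac_idls}.\ref{lemma:frac_idls:colontrace} with the choice $I=\p$, $J=S$, which gives $\p S^t = (S:\p)^t$, and hence, taking trace duals once more, $(\p S^t)^t = (S:\p) = (\p:\p)$. Now apply Lemma \ref{lemma:frac_idls}.\ref{lemma:frac_idls:pontryagin} to the inclusion $\p S^t \subseteq S^t$ to obtain a $Z$-module isomorphism
\[
    \frac{S^t}{\p S^t} \;\simeq\; \frac{(\p S^t)^t}{(S^t)^t} \;=\; \frac{(\p:\p)}{S}.
\]
Both sides are annihilated by $\p$ (the right-hand side because $\p(\p:\p)\subseteq\p\subseteq S$), so they are finite-length $S/\p$-vector spaces.

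The main subtlety is that the isomorphism produced by Lemma \ref{lemma:frac_idls}.\ref{lemma:frac_idls:pontryagin} is only of $Z$-modules, so I cannot directly read off equality of $S/\p$-dimensions from it. I would resolve this via a length argument: isomorphic finite-length $Z$-modules have equal $Z$-length, and for any $S/\p$-vector space $V$ one has $\ell_Z(V) = \dim_{S/\p}(V)\cdot [S/\p : Z/p]$, where $p=\p\cap Z$. Since $[S/\p:Z/p]$ is a positive finite integer by Lemma \ref{lemma:primesmax}, equality of $Z$-lengths forces equality of $S/\p$-dimensions. Combining this with the splitting from the first paragraph yields the claimed formula.
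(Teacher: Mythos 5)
Your proposal is correct and follows essentially the same route as the paper: both use Lemma \ref{lemma:multringp} together with Lemma \ref{lemma:frac_idls}.\ref{lemma:frac_idls:colontrace} to identify $(\p S^t)^t=(S:\p)=(\p:\p)$, then Lemma \ref{lemma:frac_idls}.\ref{lemma:frac_idls:pontryagin} to get the $Z$-module isomorphism $S^t/\p S^t\simeq (\p:\p)/S$, and finally pass from $Z$-length to $S/\p$-dimension using that $S/\p$ is finite over $Z/p$. Your explicit length computation simply spells out what the paper compresses into the phrase ``comparing the dimensions over $Z/p$.''
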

\begin{proof}
    Using Lemmas \ref{lemma:frac_idls}.\ref{lemma:frac_idls:colontrace} and \ref{lemma:multringp},
    we see that $(\p S^t)^t = (S:\p) = (\p:\p)$.
    Therefore, the quotient
    \[ \frac{(\p S^t)^t}{S} = \frac{(\p:\p)}{S} \]
    is isomorphic (as a $Z$-module) to $S^t/\p S^t$ by Lemma \ref{lemma:frac_idls}.\ref{lemma:frac_idls:pontryagin}.
    Recall that~$S/\p$ is a finite field extension of $Z/p$, where $p$ is the contraction of $\p$ in $Z$, by Lemma \ref{lemma:primesmax}.
    Hence, comparing the dimensions over $Z/p$, we conclude that
    \[ \dim_{S/\p} \frac{(\p:\p)}{\p} = \dim_{S/\p} \left( \frac{(\p:\p)}{S} \oplus \frac S\p \right)
    = \dim_{S/\p} \frac{S^t}{\p S^t} +1.
     \]
\end{proof}
\begin{cor}\label{cor:typeasmax}
    Let $S$ be a non-maximal order. Then
    \[ \type{S} +1 = \max\set{ \dim_{S/\p}\frac{(\p:\p)}{\p} : \p\text{ is a prime of }S }. \]
\end{cor}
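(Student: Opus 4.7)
The plan is to reduce the statement to Proposition \ref{prop:cmtype_multring} by treating invertible and non-invertible primes separately, and then comparing maxima.

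First, I would record the basic dictionary for each prime $\p$ of $S$. If $\p$ is invertible, then by Proposition \ref{prop:type_at_inv_p} we have $\typep{S}=1$, and by Lemma \ref{lemma:multringp} we have $(\p:\p)=S$, so
\[ \dim_{S/\p}\frac{(\p:\p)}{\p} = \dim_{S/\p}\frac{S}{\p} = 1. \]
If $\p$ is not invertible, then Proposition \ref{prop:cmtype_multring} gives directly
\[ \typep{S}+1 = \dim_{S/\p}\frac{(\p:\p)}{\p}, \]
and the right-hand side is at least $2$ since Lemma \ref{lemma:multringp} yields $(\p:\p)\supsetneq S$.

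Next, I would use the hypothesis $S\neq \cO_K$, which via Lemma \ref{lemma:finmanyprimes}.\ref{lemma:finmanyprimes:invDVR} implies that there is at least one non-invertible prime of $S$ (namely any prime containing the conductor $\frf=(S:\cO_K)\subsetneq S$). In particular, $\max_\p \dim_{S/\p}(\p:\p)/\p \ge 2$.

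Finally, I would split into two cases. If $\type{S}\ge 2$, pick a prime $\p_0$ realizing $\typep[\p_0]{S}=\type{S}$; this $\p_0$ must be non-invertible (invertible primes give type $1$), so by the first bullet above
\[ \dim_{S/\p_0}\frac{(\p_0:\p_0)}{\p_0} = \typep[\p_0]{S}+1 = \type{S}+1, \]
while every other prime $\p$ gives $\dim_{S/\p}(\p:\p)/\p \le \typep{S}+1 \le \type{S}+1$ (trivially for invertible $\p$, and by Proposition \ref{prop:cmtype_multring} for non-invertible $\p$). If $\type{S}=1$, then every non-invertible prime contributes $\dim_{S/\p}(\p:\p)/\p = 2 = \type{S}+1$, invertible primes contribute $1$, and at least one non-invertible prime exists, so the maximum is again $\type{S}+1$. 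The only subtle point in the argument is keeping track of the invertible primes where Proposition \ref{prop:cmtype_multring} does not apply, but as shown they never affect the maximum once a single non-invertible prime is present.
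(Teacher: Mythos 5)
Your proof is correct and follows essentially the same route as the paper: handle invertible primes via Proposition \ref{prop:type_at_inv_p} together with $(\p:\p)=S$, reduce both maxima to the non-invertible primes (which exist since $S\neq\cO_K$), and conclude by Proposition \ref{prop:cmtype_multring}. Your explicit case split on $\type{S}\geq 2$ versus $\type{S}=1$ and the conductor argument for the existence of a non-invertible prime merely spell out details the paper leaves implicit.
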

\begin{proof}
    If $\p$ is invertible then $\typep{S} = 1$ by Proposition \ref{prop:type_at_inv_p}.
    Hence in the definition of the global type for a non-maximal order it is enough to consider the non invertible primes.
    Also, by Lemma \ref{lemma:invmultring}, if $\p$ is invertible then $S/\p = (\p:\p)/\p$ .
    Hence also in the right hand side of the equality in the statement, we can restrict ourselves to the non-invertible primes of $S$.
    Therefore the conclusion follows from Proposition \ref{prop:cmtype_multring}.
\end{proof}

\section{Type, overorders and minimal number of generators.}\
\label{sec:typegens}
In this section we exhibit connections between the invertibility and the minimal number of generators of a fractional ideal, and the type of its multiplicator ring.

Let $Z$ be a Dedekind domain with field of fractions $Q$ and let $K$ be a non-zero $Q$-\'etale algebra.
All orders in this section are orders in $K$.
\begin{df}\label{def:num_gens_order}
    For an order $S$, put
    \[ \gens{S} = \max\set{ \gensover{S}{I} : I \text{ is a fractional $S$-ideals}}. \]
\end{df}

\begin{lemma}\label{lemma:gensI}
    Let $S$ be an order and $I$ be a fractional $S$-ideal.
    Then
    \[ \gensover{S}{I}\leq\max\set{ 2,\dim_{S/\p} (I/\p I) : \p \text{ prime of }S}, \]
    with equality if and only if $\gensover{S}{I}\neq 1$, that is, $I$ is not principal.
\end{lemma}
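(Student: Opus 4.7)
The plan is to prove the two inequalities separately, and then combine them to deduce the equality characterization. For the lower bound, if $I = x_1 S + \ldots + x_g S$ with $g = \gensover{S}{I}$, then $\bar{x}_1,\ldots,\bar{x}_g$ generate $I/\p I$ over $S/\p$ for every prime $\p$, so $\dim_{S/\p}(I/\p I) \leq g$; and $g = 1$ exactly when $I$ is principal. Hence, when $I$ is not principal, $\gensover{S}{I} \geq \max\set{2,\dim_{S/\p}(I/\p I) : \p \text{ prime of } S}$, which simultaneously proves the lower bound and the ``if and only if'' part of the statement.

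For the upper bound, set $d := \max\set{2,\max_\p \dim_{S/\p}(I/\p I)}$ and $\Sigma := \set{\p : \dim_{S/\p}(I/\p I) \geq 2}$. By Lemma~\ref{lemma:loc_inv_princ}, $\Sigma$ is precisely the set of primes at which $I_\p$ is not principal; by Lemma~\ref{lemma:pinvpowerofp} together with Lemma~\ref{lemma:finmanyprimes}.\ref{lemma:finmanyprimes:finmanynotinv}, $\Sigma$ is finite. The strategy is to construct $x_1,\ldots,x_d \in I$ whose images span $I/\p I$ at every prime $\p$; then $x_1 S + \ldots + x_d S = I$ follows from Nakayama's lemma and the local-global principle.

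The delicate step is the choice of $x_1$. Using the CRT surjection $I \twoheadrightarrow \bigoplus_{\p \in \Sigma} I/\p I$, pick a lift $v \in I$ of a tuple of nonzero vectors. To promote $v$ to a non-zero divisor, let $J := \bigcap_{\p \in \Sigma} \p$: no $\p \in \Sigma$ is a minimal prime of $S$, since $S_\eta$ is a field at each minimal prime $\eta$ and so $I_\eta$ is principal there; hence $J$ is not contained in any minimal prime, and $JI$ is a fractional $S$-ideal. By Lemma~\ref{lemma:frac_idls}.\ref{lemma:frac_idls:eltofZ}, $JI$ contains a non-zero element $w \in Z \subseteq K^\times$. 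Since $Z$ is infinite, some $z \in Z$ makes every component of $v + zw$ in the decomposition $K \simeq K_1 \times \ldots \times K_r$ nonzero; set $x_1 := v + zw$. Then $x_1 \in K^\times$, and since $zw \in JI \subseteq \p I$ for every $\p \in \Sigma$, we have $\bar{x}_1 = \bar{v} \neq 0$ in $I/\p I$. Now put $\Sigma' := \Sigma \cup \operatorname{supp}(I/x_1 S)$; because $x_1$ is a non-zero divisor, $I/x_1 S$ vanishes at every minimal prime of $S$, so $\Sigma'$ is finite. A second CRT step produces $x_2,\ldots,x_d \in I$ whose images at each $\p \in \Sigma$ extend $\bar{x}_1$ to a basis of $I/\p I$ (possible since $d \geq \dim_{S/\p}(I/\p I)$ and $\bar{x}_1 \neq 0$), and whose images at each $\p \in \operatorname{supp}(I/x_1 S) \setminus \Sigma$ include at least one nonzero vector (there $\bar{x}_1 = 0$, because $I_\p$ is principal and $x_1 S_\p \subsetneq I_\p$ forces $x_1 \in \p I$).

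It remains to verify $x_1 S + \ldots + x_d S = I$ locally: at $\p \notin \Sigma'$ we have $x_1 S_\p = I_\p$, while at $\p \in \Sigma'$ the images of $x_1,\ldots,x_d$ span $I/\p I$ by construction and Nakayama concludes. The main obstacle is the construction of $x_1$, where one must simultaneously enforce a non-zero divisor condition (statement about minimal primes) and a nonvanishing condition in $I/\p I$ for $\p \in \Sigma$ (statement about finitely many maximal primes); this combines CRT, Lemma~\ref{lemma:frac_idls}.\ref{lemma:frac_idls:eltofZ} and the infinity of $Z$, in the same spirit as Lemma~\ref{lemma:gens_nonzerodiv}.
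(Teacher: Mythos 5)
Your proposal is correct and follows essentially the same strategy as the paper's proof: reduce to local generator counts via Nakayama, use CRT at the finitely many non-principal localizations to pick a first generator, perturb it into a non-zero divisor using the infinitude of $Z$ (as in Lemma~\ref{lemma:gens_nonzerodiv}), then a second CRT step over the support of $I/x_1S$, and conclude by localization. The only cosmetic differences are that you treat the cases $d=2$ and $d>2$ uniformly (the paper splits into $m=1$ and $m>1$) and you impose nonvanishing of $x_1$ at all primes with $\dim_{S/\p}(I/\p I)\geq 2$ rather than only at those attaining the maximum.
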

\begin{proof}
    By Nakayama's Lemma, for a prime $\p$ of $S$, we have the equality $\gensover{S_\p}{I_\p}=\dim_{S/\p}(I/\p I)$.
    Put $m=\max\set{ \gensover{S_\p}{I_\p} : \p \text{ a prime of }S}$.

    First we consider the case $m>1$.
    Construct $b_1$ in the following way.
    Put
    \[ \mathcal{A}=\set{ \p : \gensover{S_\p}{I_\p}=m } \]
    and consider the natural surjection
    \[ \pi : I \to \dfrac{I}{I\cdot \prod_{\p \in \mathcal{A}}\p} \simeq \prod_{\p \in \mathcal{A}} \frac{I}{\p I}. \]
    Observe that $\mathcal{A}$ is a finite set since $\gensover{S_\p}{I_\p}=1$ for almost all $\p$ by Lemmas~\ref{lemma:finmanyprimes}\ref{lemma:finmanyprimes:finmanynotinv} and \ref{lemma:pinvpowerofp}.
    Pick $b_1\in I$ such that $\pi(b_1)$ is non-zero in each coordinate.
    Possibly by replacing $b_1$ with $b_1+y$ with $y$ a non-zero divisor in~$I\cdot \prod_{\p \in \mathcal{A}}\p$ chosen with a procedure similar to the proof of Lemma \ref{lemma:gens_nonzerodiv}, we can ensure that $b_1$ is in $I\cap K^\times$.
    Now consider the set
    \[ \mathcal{B}=\set{ \p : I \neq b_1S +\p I }. \]
    Observe that $\mathcal{B}$ is finite since it coincides with the support of the finite length $S$-module $I/b_1 S$.
    For every $\p\in \mathcal{B}$ define vectors $b_{1,\p},\ldots, b_{m,\p}$ of $I/\p I$ in the following way:
    let $b_{1,\p}$ the image of $b_1$ via the surjection $I\to I/\p I$ and then take vectors $b_{2,\p},\ldots,b_{m,\p}$, possibly with repetitions, such that $b_{1,\p},\ldots, b_{m,\p}$ generate $I/\p I$ over $S/\p$.
    Note that $\mathcal{A}\subseteq \mathcal{B}$, so $b_{1,\p}\neq 0$ for all $\p$ such that the dimension of $I/\p I$ is maximal, that is, equal to $m$.
    The Chinese remainder theorem gives an isomorphism of rings
    \[ \frac{S}{\prod_{\p \in \mathcal{B}}\p} \simeq \prod_{\p \in \mathcal{B}} \frac{S}{\p}. \]
    By taking tensor products with $I$ over $S$, we obtain
    an $S$-linear isomorphism
    \[ \frac{I}{I\cdot \prod_{\p \in \mathcal{B}}\p} \simeq \prod_{\p \in \mathcal{B}} \frac{I}{\p I}, \]
    which,
    for $i=2,\ldots,m$, allows us to pick $b_i$ in $I$ such that it reduces to $b_{i,\p}$ modulo $\p I$ for each prime~$\p\in\mathcal{B}$.
    One checks using localization that $b_1,\ldots,b_m$ is a set of generators of~$I$ over $S$: if $\p$ is in $\mathcal{B}$ then this is the case by construction, while if $\p$ is not in~$\mathcal{B}$ then $b_1$ is a local generator at $\p$.

    Now we consider the case when $m=1$.
    Pick an element $b_1$ in~$I \cap K^\times$, which is not empty by Lemma~\ref{lemma:frac_idls}.
    Define the set $\mathcal{B}$ as above.
    In this case, note that $\p \in \mathcal{B}$ if and only if $b_1\in \p I$.
    Pick $b_2 \in I$ so that its reduction modulo $\p I$ is non-zero for every $\p \in \mathcal{B}$.
    Then, again by localization, one checks that $I=b_1 S+b_2 S$.
\end{proof}

It is known that, among all fractional $S$-ideals, the one that requires the biggest number of generators is $\cO_K$.
\begin{lemma}[{\cite[Corollary 2.2]{Greither82}}]
    \label{lemma:greither}
    Let $S$ be an order. Then
    \[ \gens{S} \leq \max\set{2,\gensover{S}{\cO_K}}.\]
    The inequality is strict if and only if $S=\cO_K$ and $\cO_K$ is a product of PID, that is, $K$ has class number one.
\end{lemma}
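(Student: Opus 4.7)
The plan is to combine Lemma \ref{lemma:gensI}, applied to both an arbitrary fractional $S$-ideal $I$ and to $\cO_K$ itself, with a local dimension comparison between $I/\p I$ and $\cO_K/\p\cO_K$ at each prime $\p$.

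First, by Lemma \ref{lemma:gensI} applied to $I$ and to $\cO_K$,
\[
\gensover{S}{I} \leq \max\set{2,\dim_{S/\p}(I/\p I) : \p} \quad\text{and}\quad \gensover{S}{\cO_K} \leq \max\set{2,\dim_{S/\p}(\cO_K/\p\cO_K) : \p}.
\]
Observe that $\cO_K$ is $S$-principal if and only if $S=\cO_K$: if $\cO_K = \beta S$ then $1 = \beta s$ for some $s\in S$, forcing $\beta$ to be a unit of $\cO_K$. Hence, whenever $S\neq \cO_K$, the second inequality above is an equality.

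The heart of the proof would be to establish that for every prime $\p$ of $S$ and every fractional $S$-ideal $I$,
\[
\dim_{S/\p}(I/\p I) \leq \dim_{S/\p}(\cO_K/\p\cO_K). \qquad (\star)
\]
Granted $(\star)$, chaining the two bounds above immediately yields $\gensover{S}{I} \leq \max\set{2,\gensover{S}{\cO_K}}$.

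To establish $(\star)$ I would pass to the completion. By Lemma \ref{lemma:completion}, the ring $\cO_K\otimes_S \hat S_\p$ is the integral closure of $\hat S_\p$ in its total ring of quotients; being the maximal order of a finite \'etale algebra over the complete DVR $\hat Z_p$, it decomposes as a finite product of complete DVRs and is in particular a principal ideal ring. Therefore $\hat I_\p \cdot (\cO_K\otimes_S \hat S_\p)$ is principal as a fractional ideal of $\cO_K\otimes_S \hat S_\p$, and after multiplying $\hat I_\p$ by the inverse of a generator (which preserves $\dim_{S/\p}(\hat I_\p/\p\hat I_\p)$) we reduce to the normalized situation where $\hat I_\p \subseteq \cO_K\otimes_S \hat S_\p$ and $\hat I_\p \cdot (\cO_K\otimes_S \hat S_\p) = \cO_K\otimes_S \hat S_\p$. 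The bound $(\star)$ in this normalized setting is precisely \cite[Corollary 2.2]{Greither82}, and its proof constitutes the main obstacle.

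For the strict inequality criterion: if $S\neq \cO_K$ then $\gensover{S}{\cO_K}\geq 2$ and the bound is attained by $I=\cO_K$. If $S=\cO_K$, then $\gensover{S}{\cO_K}=1$ and the bound reads $\gens{\cO_K}\leq 2$, which is strict precisely when every fractional $\cO_K$-ideal is principal, i.e., when $\cO_K$ is a product of PIDs (equivalently, $K$ has class number one).
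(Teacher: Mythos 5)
The paper gives no internal proof of this lemma at all: it is quoted wholesale from Greither, so the only ``proof'' in the paper is the citation. Your proposal, read as a standalone proof, has a genuine gap precisely where all the content of the statement lies: the inequality $(\star)$, namely $\dim_{S/\p}(I/\p I)\leq \dim_{S/\p}(\cO_K/\p\cO_K)$ for every fractional $S$-ideal $I$ and every prime $\p$, is never established -- you reduce the lemma to it and then hand it back to the very reference the lemma is attributed to, acknowledging it as ``the main obstacle.'' Everything you actually argue (the two applications of Lemma \ref{lemma:gensI}; the observation that $\cO_K$ is $S$-principal only when $S=\cO_K$; the passage to $\hat S_\p$ and the normalization $\hat I_\p\subseteq \hat\cO_{K,\p}$ with $\hat I_\p\hat\cO_{K,\p}=\hat\cO_{K,\p}$; the characterization of strictness) is correct, but it is the routine bookkeeping. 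The assertion that the normalization requires at least as many generators locally as any fractional ideal does not follow from anything available in the paper, and naive arguments go the wrong way: from $\frm I\subseteq I\cap \frm\cO_{K,\p}$ one only gets a surjection of $I/\p I$ onto the subspace $(I+\frm\cO_{K,\p})/\frm\cO_{K,\p}$ of $\cO_K/\p\cO_K$, i.e.\ a lower bound for $\dim I/\p I$, not the upper bound $(\star)$. Proving $(\star)$ is essentially Greither's theorem (or requires Bass-style/duality techniques), so the heart of the lemma remains unproved in your write-up.

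In short: your reformulation and the ``strict if and only if'' analysis are sound, and your reliance on the citation mirrors what the paper itself does; but as a blind proof the key inequality is assumed rather than proved, so the attempt is incomplete rather than wrong.
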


\begin{cor}\label{cor:gensS}
    Let $S$ be a non-maximal order. Then
    \[ \gens{S} = \gensover{S}{\cO_K} = \max\set{ \dim_{S/\p} (\cO_K/\p \cO_K) : \p \text{ prime of }S}. \]
\end{cor}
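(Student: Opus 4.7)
The plan is to combine two existing results—Lemma \ref{lemma:greither} and Lemma \ref{lemma:gensI}—with the observation that non-maximality of $S$ forces the existence of a non-invertible prime.

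First I would apply Lemma \ref{lemma:greither}: since $S \neq \cO_K$, the condition for strict inequality there fails, so $\gens{S} = \max\set{2, \gensover{S}{\cO_K}}$. To absorb the $2$, I need $\cO_K$ to require at least two generators over $S$. If $\cO_K$ were cyclic, say $\cO_K = xS$, then $1 = xs$ for some $s \in S$ would force $s \in \cO_K^\times$ and $x = s^{-1}$; but then $S = s \cO_K = \cO_K$, contradicting non-maximality. Hence $\gensover{S}{\cO_K} \geq 2$, giving the first equality $\gens{S} = \gensover{S}{\cO_K}$.

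Next I would apply Lemma \ref{lemma:gensI} to $I = \cO_K$. Since $\cO_K$ is not principal, the inequality there is an equality, yielding
\[ \gensover{S}{\cO_K} = \max\set{ 2, \dim_{S/\p}(\cO_K/\p\cO_K) : \p \text{ prime of }S}. \]
To drop the $2$, I need a prime $\p$ with $\dim_{S/\p}(\cO_K/\p\cO_K) \geq 2$. By Lemma \ref{lemma:finmanyprimes}.\ref{lemma:finmanyprimes:invDVR}, this dimension equals $1$ exactly at invertible primes, i.e., primes where $S_\p = \cO_{K,\p}$. If every prime of $S$ were invertible, then $S_\p = \cO_{K,\p}$ for all $\p$, and a standard local-global argument on the non-zero finitely generated $S$-module $\cO_K/S$ would force $S = \cO_K$, contradicting non-maximality. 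So at least one prime contributes a dimension $\geq 2$, and the second equality follows.

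The argument is essentially bookkeeping on top of the preceding lemmas; the only substantive points are the two non-maximality arguments (non-cyclicity of $\cO_K$ and existence of a non-invertible prime), neither of which presents a real obstacle.
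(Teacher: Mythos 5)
Your proof is correct and follows essentially the same route as the paper: Lemma \ref{lemma:greither} gives the first equality and Lemma \ref{lemma:gensI} (applied to $I=\cO_K$) gives the second, with non-maximality used to rule out $\cO_K$ being principal over $S$ and to produce a non-invertible prime. You simply spell out these two non-maximality points more explicitly than the paper does, and both arguments are sound.
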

\begin{proof}
    Since $S\neq \cO_K$ then $\gensover{S}{\cO_K}\geq 2$.
    The first equality follows from Lemma \ref{lemma:greither} and the second follows from Lemma \ref{lemma:gensI}.
\end{proof}

The following proposition allows us to compare the size of minimal generating sets of the fractional ideals of an order with the type of some specific overorders.

\begin{prop}\label{prop:specialoo_numgens}
    Let $\p$ be a prime of an order $S$.
    Then
    \[
    \dim_{S/\p} \frac{\cO_K}{\p \cO_K} =
    \begin{cases}
        1 & \text{if $\p$ is invertible,}\\
        1+\type{S+\p \cO_K} & \text{otherwise.}
    \end{cases}
    \]
    In particular, if $S$ is not maximal then
    \[ g(S) = 1+\max\set{ \type{S+\p \cO_K} : \p \text{ a prime of }S}. \]
\end{prop}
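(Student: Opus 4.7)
The strategy is to reduce the ``In particular'' formula to the piecewise formula via Corollary \ref{cor:gensS}, and to prove the latter by analyzing the overorder $T=S+\p\cO_K$ prime by prime. By Corollary \ref{cor:gensS}, $g(S)=\max_\p\dim_{S/\p}\cO_K/\p\cO_K$. Invertible primes contribute $1$ by Lemma \ref{lemma:finmanyprimes}.\ref{lemma:finmanyprimes:invDVR}, while by the piecewise formula (once established) non-invertible primes contribute at least $2$; non-maximality of $S$ guarantees the existence of at least one non-invertible prime (any maximal ideal containing the conductor), and since $\type{\cO_K}=1$, the invertible contributions $1+\type{\cO_K}=2$ to $\max_\p(1+\type{S+\p\cO_K})$ are always dominated. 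The invertible half of the piecewise formula is itself just Lemma \ref{lemma:finmanyprimes}.\ref{lemma:finmanyprimes:invDVR}, so the real work is the non-invertible case.

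Set $T=S+\p\cO_K$, $\q=\p\cO_K$, and $d=\dim_{S/\p}\cO_K/\p\cO_K$. The plan is to show that $\q$ is a non-invertible prime of $T$ with $(\q:\q)=\cO_K$, then apply Proposition \ref{prop:cmtype_multring} to get $\typeover{\q}{T}=d-1$, and finally show every other prime of $T$ contributes only $1$ to the maximum in Corollary \ref{cor:typeasmax}. The ideal $\q$ is a $T$-submodule (it is an $\cO_K$-ideal contained in $T$); the quotient $T/\q\cong S/(S\cap\q)$ equals $S/\p$ because $S\cap\q$ is an ideal of $S$ containing the maximal ideal $\p$ but not equal to $S$ (otherwise $\p\cO_K=\cO_K$, impossible by Nakayama applied to the f.g.\ $S_\p$-module $\cO_{K,\p}$). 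The inclusion $\cO_K\subseteq(\q:\q)$ holds because $\q$ is an $\cO_K$-ideal, while $(\q:\q)\subseteq\cO_K$ by maximality, so $(\q:\q)=\cO_K$. This is strictly larger than $T$: otherwise $T=\cO_K$ would yield $\cO_K/S\cong\q/\p$, hence $\p\cdot(\cO_K/S)=\cO_K/S$; Nakayama localized at $\p$ then gives $S_\p=\cO_{K,\p}$, contradicting non-invertibility of $\p$ by Lemma \ref{lemma:finmanyprimes}.\ref{lemma:finmanyprimes:invDVR}. Hence $\q$ is not invertible in $T$ by Lemma \ref{lemma:multringp}, and Proposition \ref{prop:cmtype_multring} gives $\typeover{\q}{T}+1=d$.

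It remains to show that the other primes of $T$ contribute only $1$ in Corollary \ref{cor:typeasmax}. The key observation is that $\q\cdot\cO_K=\q\subseteq T$, so the $T$-module $\cO_K/T$ is annihilated by $\q$. For any prime $\frP\ne\q$ of $T$, incomparability of maximal ideals yields $\q\not\subseteq\frP$, and localizing at $\frP$ kills $\cO_K/T$, whence $T_\frP=\cO_{K,\frP}$. By Lemma \ref{lemma:finmanyprimes}.\ref{lemma:finmanyprimes:invDVR} such $\frP$ is invertible in $T$, so $(\frP:\frP)=T$ by Lemma \ref{lemma:invmultring} and $\dim_{T/\frP}(\frP:\frP)/\frP=1$. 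Combining with the computation at $\q$ gives $\type{T}=d-1$ via Corollary \ref{cor:typeasmax}, completing the piecewise formula.

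The main obstacle I anticipate is the step excluding $T=\cO_K$: the whole argument hinges on the strict inclusion $T\subsetneq(\q:\q)=\cO_K$, which in turn rests on two Nakayama applications (first for $S\cap\q=\p$, then for $T\ne\cO_K$) both crucially using the non-invertibility of $\p$. Keeping those two applications cleanly separated and correctly invoked is the delicate point of the proof.
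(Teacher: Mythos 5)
Your argument is correct and follows essentially the same route as the paper: you form $T=S+\p\cO_K$, identify $\p\cO_K$ as a prime of $T$ with residue field $S/\p$ and multiplicator ring $\cO_K$, apply Proposition \ref{prop:cmtype_multring}, and then show every other prime of $T$ is invertible so that only $\p\cO_K$ contributes to $\type{T}$. The only cosmetic differences are that the paper sees the invertibility of the other primes by identifying $\p\cO_K$ with the conductor $(T:\cO_K)$, whereas you localize the $\p\cO_K$-annihilated module $\cO_K/T$, and that your last step tacitly uses that $S+\p\cO_K=\cO_K$ (hence has type $1$) when $\p$ is invertible --- which is true, by checking localizations at all primes of $S$.
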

\begin{proof}
    Put $\frP=\p \cO_K$ and consider the overorder $T=S+\frP$ of $S$.
    Observe that we have inclusions $\p \subseteq S \cap \frP \subseteq S$.
    Since, $1$ is not in $\frP$, we have the equality $\p = S \cap \frP$.
    Hence
    \begin{equation}\label{eq:resfield}
        \frac{T}{\frP} = \frac{S+\frP}{\frP} \simeq \frac{S}{\frP \cap S} = \frac{S}{\p},
    \end{equation}
    which, in particular, shows that $\frP$ is a prime of $T$.
    Note that $(\frP:\frP)=\cO_K$ and so
    \begin{equation}\label{eq:OpOvsSp}
        \frac{\cO_K}{\p\cO_K} = \frac{\cO_K}{\frP} = \frac{(\frP:\frP)}{\frP}.
    \end{equation}

    We already know by Lemma \ref{lemma:finmanyprimes}.\ref{lemma:finmanyprimes:invDVR} that
    $\p$ is invertible if and only if~$\cO_K/\p\cO_K$ has dimension $1$ over $S/\p$.
    So we assume that $\p$ is not invertible.
    If $\frP$ were invertible then by Lemma \ref{lemma:invmultring} we would have $(\frP:\frP)=T$.
    Hence by Equations \eqref{eq:resfield} and \eqref{eq:OpOvsSp} we would have that $\cO_K/\p\cO_K$ has dimension $1$ as a vector space over $S/\p$.
    Hence $\frP$ is not invertible.
    So we can apply Proposition \ref{prop:cmtype_multring} which gives us the first equality in
    \[ \typeover{\frP}{T} + 1 = \dim_{T/\frP} \frac{(\frP:\frP)}{\frP} = \dim_{S/\p} \frac{\cO_K}{\p\cO_K}, \]
    while the last equality is given by Equations \eqref{eq:resfield} and \eqref{eq:OpOvsSp}.
    To conclude the proof we need to show that $\typeover{\frP}{T} = \type{T}$.
    Note that $\frP$ is a fractional~$\cO_K$-ideal inside $T$.
    So $\frP$ is contained in the conductor $(T:\cO_K)$, which is a proper ideal since $\frP$ is not invertible and $(T:\cO_K) \subseteq (\frP:\frP)$.
    Hence $\frP=(T:\cO_K)$ by maximality of $\frP$.
    By Lemma \ref{lemma:finmanyprimes}.\ref{lemma:finmanyprimes:invDVR} we have that $\frP$ is the only non invertible ideal of~$T$.
    Proposition \ref{prop:type_at_inv_p} shows then that the only contribution to the global type of $T$ comes from $\frP$, that is, $\typeover{\frP}{T} = \type{T}$.

    The last statement is an immediate consequence of Corollary \ref{cor:gensS}.
\end{proof}

In Section \ref{sec:CMtypeorder}, we discussed Gorenstein orders.
A generalization, which takes into account all the overorders, is the following.
An order which is either maximal or satisfies the equivalent conditions of Proposition \ref{prop:Bass} is called a {\it Bass} order.

\begin{prop}\label{prop:Bass}
    Let $S$ be a non-maximal order. Then the following are equivalent:
    \begin{enumerate}[(i)]
        \item For every overorder $T$ of $S$ we have $\type{T}=1$.
        \item Every overorder $T$ of $S$ is Gorenstein.
        \item Every fractional $S$-ideal $I$ is invertible as a fractional $(I:I)$-ideal.
        \item The $S$-module $\cO_K/S$ is cyclic.
        \item $\gens{S} = 2$.
    \end{enumerate}
\end{prop}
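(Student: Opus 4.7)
My plan is to prove the five conditions equivalent in three blocks, (i)$\Leftrightarrow$(ii)$\Leftrightarrow$(iii) and (iv)$\Leftrightarrow$(v), bridged by (i)$\Leftrightarrow$(v). The first equivalence is immediate from Proposition \ref{prop:Gorenstein}, which identifies type-one orders with Gorenstein ones. For (ii)$\Leftrightarrow$(iii) I apply the fourth characterization of Gorensteinness in Proposition \ref{prop:Gorenstein}: for (ii)$\Rightarrow$(iii), the multiplicator ring $T:=(I:I)$ of any fractional $S$-ideal $I$ is an overorder, hence Gorenstein by (ii), so $I$ is invertible over $T$; conversely for (iii)$\Rightarrow$(ii), every fractional $T$-ideal $J$ with $(J:J)=T$ is also a fractional $S$-ideal with the same multiplicator ring, hence invertible over $T$ by (iii), whence $T$ is Gorenstein by Proposition \ref{prop:Gorenstein} applied to $T$.

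For (iv)$\Leftrightarrow$(v) I combine Corollary \ref{cor:gensS} with a computation already present in the proof of Proposition \ref{prop:specialoo_numgens}. At each prime $\p$ of $S$, the isomorphism $(S+\p\cO_K)/\p\cO_K \simeq S/\p$ gives a short exact sequence of $S/\p$-vector spaces
\[ 0 \to S/\p \to \cO_K/\p\cO_K \to \cO_K/(S+\p\cO_K) \to 0, \]
and since $\cO_K/(S+\p\cO_K) \simeq (\cO_K/S)/\p(\cO_K/S)$, one deduces that $\dim_{S/\p}\cO_K/\p\cO_K = 1 + \dim_{S/\p}(\cO_K/S)/\p(\cO_K/S)$. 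Taking the maximum over $\p$ and applying Nakayama to both the fractional ideal $\cO_K$ (via Lemma \ref{lemma:gensI} and Corollary \ref{cor:gensS}) and to the finite-length $S$-module $\cO_K/S$ (via primary decomposition on its support), this globalizes to $\gens{S}=\gensover{S}{\cO_K}=1+\gensover{S}{\cO_K/S}$. Hence $\gens{S}=2$ if and only if $\cO_K/S$ is cyclic.

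The direction (i)$\Rightarrow$(v) is then immediate from Proposition \ref{prop:specialoo_numgens}: since (i) forces $\type{S+\p\cO_K}=1$ for every prime $\p$ of $S$, the formula $\gens{S}=1+\max_\p\type{S+\p\cO_K}$ yields $\gens{S}=2$. The converse (v)$\Rightarrow$(i) is the crux, and I would handle it by invoking Theorem \ref{thm:equiv_conds} with $d=2$, which strengthens Proposition \ref{prop:specialoo_numgens} from a maximum over the distinguished family $\{S+\p\cO_K\}$ to a maximum over \emph{all} overorders of $S$. The main obstacle is precisely this last enhancement: Proposition \ref{prop:specialoo_numgens} alone only controls overorders of the shape $S+\p\cO_K$, whereas (i) demands Gorensteinness for every overorder, and bridging this gap is exactly the content of Theorem \ref{thm:equiv_conds}.
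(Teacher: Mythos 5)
Your argument is correct, but it takes a different route from the paper. The paper's proof is very short: the equivalence of (i), (ii), (iii) is read off from Proposition \ref{prop:Gorenstein} (as you do), and the equivalence of (ii), (iii), (iv), (v) is outsourced to the literature, namely \cite[Theorem 2.1]{LevyWiegand85}. You instead keep everything internal: (ii)$\Leftrightarrow$(iii) via the fourth characterization in Proposition \ref{prop:Gorenstein} applied to the multiplicator ring (valid, since every fractional $T$-ideal of an overorder $T$ is a fractional $S$-ideal with the same multiplicator ring), (iv)$\Leftrightarrow$(v) via the exact sequence giving $\gens{S}=\gensover{S}{\cO_K}=1+\gensover{S}{\cO_K/S}$, and the crux (v)$\Rightarrow$(i) by specializing Theorem \ref{thm:equiv_conds} to $d=2$. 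One remark on that last step: in the paper Theorem \ref{thm:equiv_conds} is stated \emph{after} Proposition \ref{prop:Bass} and is advertised as its generalization, so your proof inverts the expository order; this is not circular, because the proof of Theorem \ref{thm:equiv_conds} only uses Corollary \ref{cor:typeasmax}, Corollary \ref{cor:gensS}, Lemma \ref{lemma:gensI} and Proposition \ref{prop:specialoo_numgens}, never Proposition \ref{prop:Bass} itself --- indeed your (iv)$\Leftrightarrow$(v) computation duplicates part of its proof. The trade-off is clear: the paper's citation keeps Proposition \ref{prop:Bass} logically prior to, and independent of, the type machinery of Section \ref{sec:typegens}, while your version is self-contained within the paper but effectively makes the proposition a corollary of Theorem \ref{thm:equiv_conds} (with $d=2$, items (i), (iii) and (vi) of that theorem are literally conditions (i), (iv) and (v) here), so the extra direct arguments you give for (i)$\Rightarrow$(v) and (iv)$\Leftrightarrow$(v) could even be dropped.
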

\begin{proof}
    The equivalence of the first three statement is a direct consequence of Proposition \ref{prop:Gorenstein}.
    The equivalence of the last four statements can be found in \cite[Theorem 2.1]{LevyWiegand85}.
\end{proof}

Proposition \ref{prop:Gorenstein} tells us that the type measures the deviation of an order from being Gorenstein.
Following this philosophy, a measure of how an order fails to being Bass can be given in terms of the types of all the overorders.
More precisely, we have the following generalization of Proposition \ref{prop:Bass}.

\begin{thm}\label{thm:equiv_conds}
Let $S$ be a non-maximal order, $\mathcal{S}$ be the set of overorders $T$ of $S$, and $d$ be a positive integer.
Then the following are equivalent:
\begin{enumerate}[(i)]
    \item \label{thm:equiv_conds:1}
        $1+\max_{T\in \mathcal{S}}\set{\type{T}} = d$.
    \item \label{thm:equiv_conds:2}
        $\max_{T\in \mathcal{S}} \set{ \dim_{T/\frP} \left((\frP:\frP)/\frP\right) : \frP\text{ a prime of }T} = d.$
    \item \label{thm:equiv_conds:OKS}
        $\gensover{S}{\cO_K/S}=d-1$.
    \item \label{thm:equiv_conds:3}
        $\max_{T\in \mathcal{S}} \set{ \dim_{T/\frP} \left(\cO_K/\frP\cO_K\right) : \frP\text{ a prime of }T } = d$.
    \item \label{thm:equiv_conds:4}
        $\max_{T\in \mathcal{S}}\set{\gens{T}} = d$.
    \item \label{thm:equiv_conds:5}
        $g(S) = d$.
\end{enumerate}
\end{thm}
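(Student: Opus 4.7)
The six statements partition into a generator cluster $\{$(iii), (iv), (v), (vi)$\}$ and a type cluster $\{$(i), (ii)$\}$. My plan is to settle the equivalences within each cluster using the corollaries already at hand, and then to bridge the two clusters via Proposition \ref{prop:specialoo_numgens}.

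Within the generator cluster, (v) $\Leftrightarrow$ (vi) follows from $g(T) \leq g(S)$ for every $T \in \mathcal{S}$ (each fractional $T$-ideal is a fractional $S$-ideal requiring no more $T$-generators than $S$-generators), combined with $S \in \mathcal{S}$ attaining the max. For (iv) $\Leftrightarrow$ (v) I apply Corollary \ref{cor:gensS} to each non-maximal $T \in \mathcal{S}$, noting that the $T = \cO_K$ term is negligible. For (iii) $\Leftrightarrow$ (vi), Nakayama gives $\gensover{S}{\cO_K/S} = \max_\p \dim_{S/\p}(\cO_K/(S+\p\cO_K))$, and the short exact sequence
\[
0 \to S/\p \to \cO_K/\p\cO_K \to \cO_K/(S+\p\cO_K) \to 0
\]
(whose left injectivity rests on $S \cap \p\cO_K = \p$, valid because $\p$ is maximal in $S$ and $\p\cO_K$ is a proper $\cO_K$-ideal) combined with Corollary \ref{cor:gensS} yields $\gensover{S}{\cO_K/S} = g(S) - 1$. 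For the type cluster, (i) $\Leftrightarrow$ (ii) follows from Corollary \ref{cor:typeasmax} applied to each non-maximal overorder; the $T = \cO_K$ contribution is dominated by that of $S$ on both sides.

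The bridge (i) $\Leftrightarrow$ (vi) is the heart of the argument. Proposition \ref{prop:specialoo_numgens} yields $g(S) = 1 + \max_\p \type{S+\p\cO_K}$, and since each $S + \p\cO_K \in \mathcal{S}$ this immediately gives $g(S) \leq 1 + \max_{T \in \mathcal{S}}\type{T}$. The reverse inequality is the main obstacle. I would establish it by proving the pointwise bound $\typeover{\frP}{T} \leq \type{S + \p\cO_K}$ for every non-invertible prime $\frP$ of an overorder $T \in \mathcal{S}$, where $\p = \frP \cap S$; taking the max and applying Proposition \ref{prop:specialoo_numgens} at $\p$ then yields $\max_T \type{T} \leq g(S) - 1$. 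Via Corollary \ref{cor:typeasmax} and Proposition \ref{prop:specialoo_numgens}, this local bound reduces to the dimension inequality
\[
\dim_{T/\frP}((\frP:\frP)/\frP) \leq \dim_{S/\p}(\cO_K/\p\cO_K),
\]
which follows from the surjection $\cO_K/\p\cO_K \twoheadrightarrow \cO_K/\frP\cO_K$ combined with the injectivity of the natural map $(\frP:\frP)/\frP \hookrightarrow \cO_K/\frP\cO_K$ (noting that $(\frP:\frP) \subseteq \cO_K$ since it is integral over $T$). This injectivity amounts to the identity $(\frP:\frP) \cap \frP\cO_K = \frP$, which I would verify by passing to the completion at $\frP$ via Lemma \ref{lemma:completion} and exploiting the characterization $\frP = (T : (\frP:\frP))$, which holds because $(\frP:\frP)/T$ is a $T$-module supported only at $\frP$.
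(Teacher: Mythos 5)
Most of your proposal runs parallel to the paper: the generator cluster, the type cluster, and the inequality $\gens{S}\leq 1+\max_{T}\type{T}$ via Proposition \ref{prop:specialoo_numgens} are all handled as in the paper's proof. The problem is the reverse inequality, which you reduce to the pointwise bound $\dim_{T/\frP}\bigl((\frP:\frP)/\frP\bigr)\leq \dim_{S/\p}\bigl(\cO_K/\p\cO_K\bigr)$ and then try to prove via the claimed identity $(\frP:\frP)\cap \frP\cO_K=\frP$, i.e.\ injectivity of $(\frP:\frP)/\frP\to \cO_K/\frP\cO_K$. That identity is false in general. Take $Z=\Z$, $K=\Q(\sqrt2)$, $\cO_K=\Z[\sqrt2]$, $T=S=\Z[4\sqrt2]$ and $\frP=2\Z+4\sqrt2\,\Z$, the unique (non-invertible) prime above $2$. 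Then $(\frP:\frP)=\Z[2\sqrt2]$ and $\frP\cO_K=2\cO_K=2\Z+2\sqrt2\,\Z$, so $(\frP:\frP)\cap\frP\cO_K=2\Z+2\sqrt2\,\Z\supsetneq\frP$: the element $2\sqrt2$ lies in the intersection but not in $\frP$, and the map $(\frP:\frP)/\frP\to\cO_K/\frP\cO_K$ kills its nonzero class (here both quotients are $2$-dimensional over $\mathbb{F}_2$, while the image is $1$-dimensional). Your appeal to $\frP=(T:(\frP:\frP))$ is correct (it follows from Lemma \ref{lemma:multringp} and maximality of $\frP$), but it does not yield the intersection identity, and since you call this step the heart of the argument, the bridge $(i)\Leftrightarrow(vi)$ is genuinely incomplete.

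Note that the paper avoids any such prime-by-prime comparison between $T$ and $S$. For a non-maximal $T$ and a prime $\frP$ of $T$ it observes that $(\frP:\frP)$ is itself a fractional $T$-ideal, non-principal when $\frP$ is non-invertible, so Lemma \ref{lemma:gensI} gives $\dim_{T/\frP}\bigl((\frP:\frP)/\frP\bigr)=\gensover{T}{(\frP:\frP)}\leq \gens{T}\leq \gens{S}$; taking the maximum over $\frP$ and using Corollary \ref{cor:typeasmax} yields $\type{T}+1\leq \gens{S}$ directly. If you want to keep your structure, you could replace the faulty injection by this argument (or prove only the max-level inequality), since a pointwise bound against $\dim_{S/\p}(\cO_K/\p\cO_K)$ at the contracted prime is not needed for the theorem.
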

\begin{proof}
    Let $T$ be a non-maximal overorder in $\mathcal{S}$. Then
    \[ \type{T}+1 = \max\set{ \dim_{T/\frP} \frac{(\frP:\frP)}{\frP}: \frP\text{ a prime of }T}, \]
    by Corollary \ref{cor:typeasmax}.
    This implies the equivalence of \ref{thm:equiv_conds:1} and \ref{thm:equiv_conds:2}.
    Corollary~\ref{cor:gensS} gives the equivalence of \ref{thm:equiv_conds:3} and \ref{thm:equiv_conds:4}.
    Note that for every $T\in \mathcal{S}$, every fractional $T$-ideal $I$ is also a fractional $S$-ideal.
    So we have $\gensover{T}{I}\leq \gensover{S}{I}$.
    Hence $\gens{S}=\max_{T\in \mathcal{S}}\set{\gens{T}}$.
    In particular, \ref{thm:equiv_conds:4} and \ref{thm:equiv_conds:5} are equivalent.

    We now show that \ref{thm:equiv_conds:OKS} is equivalent to \ref{thm:equiv_conds:5}.
    Denote by $M$ the $S$-module~$\cO_K/S$.
    By Lemma \ref{lemma:finmanyprimes} we have that $M_\p\neq 0$ only for finitely many primes $\p$ of $S$.
    By Nakayama's Lemma and an argument similar to the proof of Lemma \ref{lemma:gensI}, we have
    \[ \gensover{S}{M}=\max\set{ \dim_{S/\p}\left( \dfrac{M}{\p M} \right) :\ \p \text{ a prime of }S }. \]
    Observe that for every prime $\p$ of $S$ we have
    \begin{equation}\label{eq:gensM}
        \dfrac{M}{\p M} \simeq \dfrac{\cO_K}{S+\p\cO_K}.
    \end{equation}
    Moreover, we have a short exact sequence of $S/\p$-vector spaces
    \begin{equation}\label{eq:sesOKS}
        0 \to \dfrac{ S+\p \cO_K }{ \p \cO_K } \to \dfrac{\cO_K}{\p\cO_K} \to \dfrac{\cO_K}{S+\p\cO_K} \to 0.
    \end{equation}
    As shown in Equation \eqref{eq:resfield}, we have that $(S+\p\cO_K)/\p\cO_K$ has dimension $1$ over $S/\p$.
    Hence by Equations \eqref{eq:gensM} and \eqref{eq:sesOKS}, we get
    \[ \dim_{S/\p}\left( \dfrac{\cO_K}{\p\cO_K} \right) = \dim_{S/\p}\left( \dfrac{M}{\p M} \right) + 1.  \]
    By Corollary \ref{cor:gensS}, taking the maximum over the primes of $S$ we get
    \[ \gens{S} = \gensover{S}{M} +1, \]
    as required.

    To conclude we show that \ref{thm:equiv_conds:1} and \ref{thm:equiv_conds:5} are equivalent, that is, that we have an equality
    \[ \gens{S} =  1+\max_{T\in \mathcal{S}}\set{\type{T}}. \]
    By Proposition \ref{prop:specialoo_numgens}, we obtain
    \begin{equation}\label{eq:gensmultring}
    \begin{split}
        \gens{S} & = 1+\max\set{ \type{S+\p\cO_K} :\ \p \text{ a prime of }S}\\
                 & \leq 1+\max_{T \in \mathcal{S} }\set{ \type{T} }.
    \end{split}
    \end{equation}
    For the other inequality, pick any non-maximal $T\in \mathcal{S}$. 
    For any prime $\frP$ of~$T$, we have
    \begin{equation}\label{eq:gensPP}
        \dim_{T/\frP} \frac{(\frP:\frP)}{\frP} \leq \gensover{T}{(\frP:\frP)}\leq \gens{T} \leq \gens{S}, 
    \end{equation}
    where the first inequality follows from Lemma \ref{lemma:gensI}, the second is clear and the third follows from the discussion above.
    Combining Equation \eqref{eq:gensPP} and Corollary \ref{cor:typeasmax}, gives us
    \[ \type{T}+1 \leq \gens{S}, \]
    as required to complete the proof.
\end{proof}
\begin{remark}
    In \cite{MarsegliaSuperMult}, we showed that an order $S$ in a number field satisfies $g(S)\leq 3$ if and only if every overorder $T$ of $S$ is \emph{super-multiplicative}, that is, for every pair of fractional $T$-ideals $I$ and $J$ contained in $T$ we have
    \[ N(IJ)\geq N(I)N(J), \]
    where the ideal norm $N(I)$ (resp.~$N(J)$ and $N(IJ)$) is defined as the size of the finite abelian group $T/I$ (resp.~$T/J$ and $T/IJ$).
    In view of Theorem~\ref{thm:equiv_conds}, we can state that every overorder $T$ of $S$ is supermultiplicative if and only if for every overorder $T$ of $S$ we have $\type{T}\leq 2$.
\end{remark}

We conclude Section \ref{sec:typegens} by showing that the type of the orders in a non-zero \'etale $Q$-algebra $K$ is bounded.
Note that the only order in $Q$ is $Z$, which has type $1$.
To avoid this trivial case, we will assume in the next proposition that $\dim_Q(K)>1$.
\begin{prop}\label{prop:type_bound}
    Assume that $\dim_Q(K)>1$.
    Then the type of an order in $K$ is bounded by $\dim_Q(K)-1$.
    Moreover, this bound is attained: for any prime $p$ of $Z$ the order $T=Z+p \cO_K$ satisfies $\type{T}=\dim_Q(K)-1$.
\end{prop}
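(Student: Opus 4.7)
The plan is to handle the two claims separately. For the upper bound, I would reduce via Proposition \ref{prop:cmtype_multring} to bounding $\dim_{S/\p}((\p:\p)/\p)$ for non-invertible primes $\p$ of $S$ with contraction $p := \p \cap Z$ (invertible primes contribute type $1$ by Proposition \ref{prop:type_at_inv_p}, which is at most $n-1$ under the hypothesis $n := \dim_Q(K) \geq 2$). The key step is to sandwich $p(\p:\p) \subseteq \p(\p:\p) \subseteq \p$, so that $(\p:\p)/\p$ is a quotient of $(\p:\p)/p(\p:\p)$. The latter is a $Z/p$-vector space of dimension equal to the $Z$-rank $n$ of the lattice $(\p:\p)$, which is a projective $Z$-module sitting between $S$ and $\cO_K$. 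Dividing by $[S/\p : Z/p] \geq 1$ yields $\typep{S} \leq n-1$, whence $\type{S} \leq n-1$.

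For the example $T = Z + p\cO_K$ with $p$ a prime of $Z$, set $\frP := p\cO_K$. I would first verify that $\frP$ is a prime of $T$ with $T/\frP \simeq Z/p$, which reduces to checking $\frP \cap Z = p$; this uses the invertibility of $p$ as a $Z$-ideal together with the integral closedness of $Z$ in $Q$. Next, $(\frP:\frP) = \cO_K$ because $\frP$ is an $\cO_K$-ideal and $\cO_K$ is maximal. A Nakayama argument (localizing at $p$) shows $T \neq \cO_K$ whenever $n \geq 2$, so $\frP$ cannot be invertible by Lemma \ref{lemma:invmultring}. Proposition \ref{prop:cmtype_multring} then produces
\[ \typeover{\frP}{T} + 1 = \dim_{Z/p}\frac{\cO_K}{p\cO_K} = n. \]

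To conclude $\type{T} = n-1$, I must show that $\frP$ is the unique non-invertible prime of $T$. By Lemma \ref{lemma:finmanyprimes}, these are exactly the primes containing the conductor $(T:\cO_K)$, so it suffices to prove $(T:\cO_K) = \frP$; maximality of $\frP$ then finishes the job. One inclusion is immediate. For the other, I would take $x \in (T:\cO_K)$, write $x = z + y$ with $z \in Z$ and $y \in \frP$, and observe $z\cO_K \subseteq T$; reducing modulo $p\cO_K$, multiplication by the class $\bar z$ sends $\cO_K/p\cO_K$ into $Z/p$, which is a strictly smaller subspace when $n \geq 2$, forcing $\bar z = 0$ and thus $z \in p$. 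I expect this final conductor computation to be the main obstacle, since it is precisely where the hypothesis $\dim_Q(K) > 1$ is used to guarantee the strict inclusion $Z/p \subsetneq \cO_K/p\cO_K$; everything else follows mechanically from the formulas of Section \ref{sec:CMtypeorder}.
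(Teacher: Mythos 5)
Your proposal is correct, and the two halves compare differently with the paper. For the example $T=Z+p\cO_K$ you follow essentially the paper's route: identify $\frP=p\cO_K$ as a prime of $T$ with residue field $Z/p$, note $(\frP:\frP)=\cO_K$, rule out invertibility via Lemma \ref{lemma:invmultring}, apply Proposition \ref{prop:cmtype_multring}, and show $\frP$ is the unique non-invertible prime by computing the conductor; the only difference is that you verify $(T:\cO_K)\subseteq\frP$ by an element-wise reduction modulo $p\cO_K$, whereas the paper gets it for free from maximality of $\frP$ (the conductor is a proper $T$-ideal containing the maximal ideal $\frP$, hence equals it), so your "main obstacle" is actually a one-line step in the paper. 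For the upper bound your argument is genuinely different: the paper converts $\typep{S}+1$ into a minimal number of generators of $(\p:\p)_\p$ via Lemma \ref{lemma:gensI}, passes to the completion, and bounds the generator count by observing that $(\p:\p)\otimes_S\hat S_\p$ is a free $\hat Z_p$-module of rank at most $\dim_Q(K)$; you instead stay at the level of residue fields, using $p(\p:\p)\subseteq\p(\p:\p)\subseteq\p$ to exhibit $(\p:\p)/\p$ as a quotient of $(\p:\p)\otimes_Z Z/p\simeq (Z/p)^n$ (since $(\p:\p)$ is a projective $Z$-lattice of rank $n$), and then compare $\dim_{S/\p}$ with $\dim_{Z/p}$ through $[S/\p:Z/p]\geq 1$. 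Your version is more elementary (no completions, no generator-count lemma) and even records the sharper bound $\typep{S}+1\leq n/[S/\p:Z/p]$; the paper's version buys a direct link between type and minimal generating sets, which is the theme of Section \ref{sec:typegens}, and reuses machinery already in place. Both are complete proofs.
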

\begin{proof}
    Pick $S$ any order in $K$.
    If $S$ is Gorenstein then $\type{S}=1$ by Proposition \ref{prop:Gorenstein}.
    So we assume that $S$ is not Gorenstein.
    By definition, there exists a prime $\p$ of $S$ such that $\type{S}=\typep{S}$.
    By Proposition~\ref{prop:type_at_inv_p}, the prime $\p$ is not invertible.
    Combining Proposition~\ref{prop:cmtype_multring} and Lemma~\ref{lemma:gensI} we obtain $\typep{S} = \gensover{S_\p}{(\p:\p)_\p}-1$.
    Also we have the equality
    \[\gensover{S_\p}{(\p:\p)_\p} = \gensover{\hat S_\p}{(\p:\p)\otimes_S \hat S_\p}.\]
    Let $p$ be the contraction in $Z$ of $\p$.
    Equation~\eqref{eq:completioniso} implies that the $\hat Z_p$-order $\hat S_\p$ is a free $\hat Z_p$-module of rank $r \leq \dim_Q(K)$.
    Hence the fractional $\hat S_\p$-ideal $(\p:\p)\otimes_S \hat S_\p$ is also a free $\hat Z_p$-module of rank $r$.
    It follows that $\gensover{\hat S_\p}{(\p:\p)\otimes_S \hat S_\p} \leq \dim_Q(K)$.
    We conclude that~$\type{S}\leq \dim_Q(K)-1$, as required.

    Now we prove the second part.
    Let~$p$ be any prime of $Z$ and define $T=Z+p\cO_K$ as in the statement.
    Put $F_p=Z/p$.
    We have a ring isomorphism
    \[ \frac{T}{p\cO_K} \simeq \frac{Z}{Z\cap p\cO_K} = F_p, \]
    where the last equality holds because $Z\cap p\cO_K = p$ by maximality of $p$.
    In particular, we get that $p\cO_K$ is a prime of $T$ with residue field isomorphic to~$F_p$.
    Since $\cO_K$ is a free $Z$-module of rank $\dim_Q(K)$, after taking the tensor product over $Z$ with $F_p$, we get $\dim_Q(K) = \dim_{F_p}(\cO_K/p\cO_K)$ which is $>1$ by assumption.
    We conclude that $T\neq \cO_K$.
    Thus the inclusion $p\cO_K \subseteq (T:\cO_K)$ is an equality.
    Hence Lemma~\ref{lemma:finmanyprimes}.\ref{lemma:finmanyprimes:invDVR} tells us that $p\cO_K$ is the unique non-invertible prime of $T$.
    Then Proposition~\ref{prop:type_at_inv_p} gives us that $\type{T} = \typeover{p\cO_K}{T}$.
    Note that the multiplicator ring of~$p\cO_K$ is~$\cO_K$.
    Using Proposition~\ref{prop:cmtype_multring}, we get
    \[ \type{T} = \dim_{F_p}\frac{\cO_K}{p \cO_K}-1 = \dim_Q(K) -1. \]
\end{proof}

\section{ Ideal classes }\
\label{sec:idealclasses}
Let $Z$ be a Dedekind domain with field of fractions $Q$ and let $K$ be a non-zero $Q$-\'etale algebra. Pick an order $S$ in $K$.
Recall that two fractional $S$-ideals $I$ and $J$ are {\it isomorphic} as $S$-modules if and only if there exists $\alpha\in K^\times$ such that $\alpha I=J$.
The set of isomorphism classes of invertible fractional $S$-ideal, the \emph{Picard group} of $S$, which we denote $\Pic(S)$, has a natural group structure induced by fractional ideal multiplication.
Similarly, the set of isomorphism classes of all fractional $S$-ideals forms a commutative monoid that we denote by $\ICM(S)$ and call the \emph{ideal class monoid} of $S$.
The ideal class monoid has a natural partitioning
\[ \ICM(S) = \bigsqcup_T \ICM_T(S), \]
where the disjoint union is taken over the overorders $T$ of $S$, and $\ICM_T(S)$ consists of the classes with multiplicator ring $T$.

We say that two fractional $S$-ideals $I$ and $J$ are \emph{weakly equivalent} if $I_\p \simeq J_\p$ for every prime~$\p$ of~$S$.
This notion was introduced in \cite{DadeTausskyZas} for orders in number fields.
Denote by the $\Wk(S)$ the monoid of weak equivalence classes of fractional~$S$-ideals.
As we will see in the next proposition, the multiplicator ring of a fractional~$S$-ideal is an invariant of its weak equivalence class.
Hence we have a partitioning
\[ \Wk(S)=\bigsqcup_T\Wk_T(S), \]
where the disjoint union is taken over the over-orders $T$ of $S$, and $\Wk_T(S)$ is the subset of $\Wk(S)$ consisting of the classes with multiplicator ring equal to~$T$.

In the next proposition, we summarize some useful facts about isomorphism and weak equivalence classes, and how invertible ideals connect the two notions.
More details can be found in \cite{MarsegliaICM}.
\begin{prop}\label{prop:computeICM}
    Let $S$ be an order.
    \begin{enumerate}[(i)]
        \item \label{prop:computeICM:wkeq}
            Let $I$ and $J$ be two fractional $S$-ideals. Then the following are equivalent
            \begin{itemize}
                \item $I$ and $J$ are weakly equivalent.
                \item $1\in (I:J)(J:I)$.
                \item $I$ and $J$ have the same multiplicator ring, say $T$, and there exists an invertible fractional $T$-ideal $L$ such that $I=LJ$.
            \end{itemize}
        \item \label{prop:computeICM:wkeqclasses}
            Let $T$ be an overorder of $S$.
            Let $T_0$ be the smallest overorder of $T$ such that the extension to $T_0$ of every fractional $T$-ideal with multiplicator ring $T$ is invertible.
            Put $\frf_0=(T:T_0)$.
            Then every class in $\Wk_T(S)$ admits a representative $I$ such that
            \[ \frf_0 \subseteq I \subseteq T_0. \]
        \item \label{prop:computeICM:isoclasses}
            Fix an overorder $T$ of $S$.
            Then there is a free action of $\Pic(T)$ on~$\ICM_T(S)$ with quotient $ \mathcal{W}_T(S)$.
    \end{enumerate}
\end{prop}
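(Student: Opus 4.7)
For part (i), the plan is to establish the cycle $(b) \Rightarrow (a) \Rightarrow (b)$ and then prove $(b) \Leftrightarrow (c)$. The implication $(b) \Rightarrow (a)$ exploits that $S_\p$ is local: given $1 = \sum_i x_i y_i$ with $x_i \in (I:J)$ and $y_i \in (J:I)$, some summand $x_j y_j$ must be a unit in $S_\p$, and then the chain $I_\p \subseteq x_j y_j I_\p \subseteq x_j J_\p \subseteq I_\p$ yields $I_\p = x_j J_\p \simeq J_\p$. For $(a) \Rightarrow (b)$, the ideal $A = (I:J)(J:I) \subseteq S$ equals $S$ exactly when it is not contained in any prime; a local isomorphism $I_\p \simeq J_\p$ gives via Lemma \ref{lemma:homcolon} elements witnessing $1 \in A_\p$ for every $\p$, so $A = S$ by Lemma \ref{lemma:loc_idl_operations}. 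The equivalence $(b) \Leftrightarrow (c)$ follows by setting $T = (I:I)$ and $L = (I:J)$: from $1 \in (I:J)(J:I)$ one extracts $(J:J) = T$ and shows $L$ is an invertible fractional $T$-ideal satisfying $LJ = I$; the reverse implication is immediate from $L \subseteq (I:J)$ and $L^{-1} \subseteq (J:I)$, giving $1 \in T = LL^{-1} \subseteq (I:J)(J:I)$.

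For part (iii), I would define the action of $\Pic(T)$ on $\ICM_T(S)$ by $[L] \cdot [I] = [LI]$ and verify well-definedness, freeness, and the identification of the quotient. The multiplicator ring is preserved because at each prime $\p$ the localization $L_\p$ is principal by Lemma \ref{lemma:loc_inv_princ}, so $(LI)_\p \simeq I_\p$ has the same endomorphism ring; hence $(LI:LI) = T$. Freeness follows from the fact that if $LI = \alpha I$ for some $\alpha \in K^\times$, then a local argument, again using that $L_\p$ is principal, forces $L = \alpha T$, so $[L]$ is trivial in $\Pic(T)$. Finally, two elements $[I], [J] \in \ICM_T(S)$ lie in the same $\Pic(T)$-orbit precisely when they are weakly equivalent, which is exactly the content of $(a) \Leftrightarrow (c)$ in part (i).

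For part (ii), the strategy is to use the action from part (iii) to transport $I$ into the desired range. By the defining property of $T_0$, the extended ideal $IT_0$ is an invertible fractional $T_0$-ideal. The main technical step is to produce an invertible fractional $T$-ideal $L$ such that $LI \cdot T_0 = T_0$, which in particular forces $LI \subseteq T_0$; this amounts to lifting the class of $(IT_0)^{-1} \in \Pic(T_0)$ to $\Pic(T)$, and will be the principal obstacle since $\Pic(T) \to \Pic(T_0)$ is not obviously surjective. I would circumvent this by starting from a fractional $T_0$-ideal $M$ with $M \cdot IT_0 = T_0$, viewing $M$ as a fractional $T$-ideal, and adjusting it within its weak equivalence class via part (i) to secure $T$-invertibility while keeping $LI \subseteq T_0$. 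Once $J := LI \subseteq T_0$ is in hand, the lower bound follows by a short direct computation: since $1 \in JT_0$, we get
\[
\frf_0 = \frf_0 \cdot 1 \subseteq \frf_0 \cdot JT_0 = J \cdot (\frf_0 T_0) = J\frf_0 \subseteq JT = J,
\]
using that $\frf_0$ is an ideal of $T_0$ contained in $T$.
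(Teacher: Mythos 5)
Your parts (i) and (iii) are essentially correct and reproduce the standard arguments (the paper itself only cites \cite[Prop.~4.1, Prop.~5.1, Thm.~4.6]{MarsegliaICM} and notes the proofs carry over). One small imprecision in (iii): you invoke Lemma \ref{lemma:loc_inv_princ} to say $L_\p$ is principal, but that lemma concerns fractional $S$-ideals and primes of $S$, whereas $L$ is invertible over $T$ and typically not over $S$; at a prime $\p$ of $S$ the ring $T_\p$ is only semilocal, so you need the semilocal variant of that lemma (or work at primes of $T$). In fact you can avoid localization entirely for well-definedness: if $x(LI)\subseteq LI$ then $xI=xL^{-1}LI\subseteq L^{-1}LI=I$, so $(LI:LI)=(I:I)=T$.

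The genuine gap is in (ii), exactly at the step you flagged. Your closing computation is fine once you have an invertible fractional $T$-ideal $L$ with $LIT_0=T_0$, but your proposed circumvention cannot produce it: starting from a fractional $T_0$-ideal $M$ with $M\cdot IT_0=T_0$ and ``adjusting it within its weak equivalence class to secure $T$-invertibility'' is impossible, because the multiplicator ring is an invariant of the weak equivalence class (this is stated just before the proposition and follows from your own part (i)), and $(M:M)=T_0$; hence every ideal weakly equivalent to $M$ has multiplicator ring $T_0$ and is never invertible over $T$ unless $T=T_0$. So the workaround collapses back onto the surjectivity question it was meant to avoid. Two ways to close the gap: (1) use surjectivity of the extension map $\Pic(T)\to\Pic(T_0)$ --- this is precisely the ingredient the paper imports from \cite[Sec.~2]{Wieg84} and \cite[Eq.~(4.3.1)]{LevyWiegand85}; given it, pick $L_1$ invertible over $T$ and $\alpha\in K^\times$ with $L_1T_0=\alpha\,(T_0:IT_0)$ and set $L=\alpha^{-1}L_1$, so $LIT_0=T_0$. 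Or (2) argue locally: at the finitely many primes where $T$ and $T_0$ differ, $(IT_0)_\p$ is principal, say generated by $x_\p$, while at all other primes $T_\p=T_{0,\p}$ and $I_\p=(IT_0)_\p$ is already principal; patching local data as in Lemma \ref{lemma:patchlocaltoglobal} yields $J$ weakly equivalent to $I$ with $JT_0=T_0$, and then your computation gives $\frf_0\subseteq J\subseteq T_0$. As written, however, the proof of (ii) is not complete.
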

\begin{proof}
    The statements for orders over $\Z$ in a non-zero \'etale $\Q$-algebras can be found in \cite[Prop~4.1,  Prop.~5.1, Thm.~4.6]{MarsegliaICM}.
    The same proofs are valid also for the more general case under consideration in this paper.
    Possibly, the only fact used in proving \ref{prop:computeICM:wkeq} that is not obvious in our more general setting is that
    the extension map induces a surjective group homomorphism from $\Pic(S)$ onto $\Pic(T)$.
    The proof of this for the case $T=\cO_K$ can be found in \cite[Sec.~2]{Wieg84}, and it works in the same way in the general case, as stated in \cite[Eq.~(4.3.1)]{LevyWiegand85}.
\end{proof}
\begin{remark}
    Part \ref{prop:computeICM:wkeq} gives a method to verify if two fractional ideals are weakly equivalent as long as we have a method to compute products and colons of fractional ideals, and check inclusion of elements.

    From Part \ref{prop:computeICM:wkeq} and Part \ref{prop:computeICM:wkeqclasses}, we obtain a method to compute $\Wk_T(S)$ if
    the quotient $T_0/\frf_0$ has finitely many sub-$S$-modules and
    we have a procedure to loop over all of them.
    This is the case for example if the quotient of every two $Z$-lattices is a finite set.
    Indeed, in that case we can use algorithms to enumerate subgroups in order to obtain all the submodules. See for example \cite[Sec.~5.2]{FHS19}.

    Then we can use Part \ref{prop:computeICM:isoclasses} to compute $\ICM_T(S)$ if we have an algorithm to compute $\Pic(T)$.
    Such an algorithm for orders over $\Z$ and $\mathbb{F}_q[t]$ is given in~\cite{klupau05}.

    Finally, these steps allow us to compute the whole $\ICM(S)$ if we can compute the overorders $T$ of $S$.
    We refer to \cite{HofmannSircana20} for an effective method.
\end{remark}

Thanks to Proposition \ref{prop:computeICM} we can reformulate the properties of being Gorenstein and Bass for an order in terms of ideal classes.
\begin{cor}\label{cor:idlclGorBass}
    Let $T$ be an overorder of an order $S$.
    Then $T$ is Gorenstein if and only if $\vert\mathcal{W}_T(S)\vert = 1$ if and only if $\Pic(T) = \ICM_T(S)$.
    The order $S$ is Bass if and only if
    \[ \ICM(S) = \bigsqcup_T \Pic(T), \]
    where the disjoint union is taken over the overorders $T$ of $S$.
\end{cor}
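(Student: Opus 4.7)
The plan is to deduce both statements by combining Proposition \ref{prop:Gorenstein}, Proposition \ref{prop:Bass}, and Proposition \ref{prop:computeICM}. The corollary is essentially a repackaging of these earlier results in the language of ideal class monoids.

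For the first statement, I would start by identifying $\ICM_T(S)$ as the set of isomorphism classes of fractional $S$-ideals with multiplicator ring equal to $T$. Since two fractional $S$-ideals are isomorphic precisely when they differ by multiplication by an element of $K^\times$, this set coincides with the set of isomorphism classes of fractional $T$-ideals whose multiplicator ring is $T$. By Lemma \ref{lemma:invmultring}, every invertible fractional $T$-ideal has multiplicator ring exactly $T$, so there is a natural inclusion $\Pic(T) \subseteq \ICM_T(S)$. Now Proposition \ref{prop:Gorenstein} says that $T$ is Gorenstein if and only if every fractional $T$-ideal $I$ with $(I:I) = T$ is invertible, which translates directly to the equality $\Pic(T) = \ICM_T(S)$.

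For the equivalence with $|\mathcal{W}_T(S)| = 1$, I would invoke Proposition \ref{prop:computeICM}.\ref{prop:computeICM:isoclasses}: the group $\Pic(T)$ acts freely on $\ICM_T(S)$ with quotient $\mathcal{W}_T(S)$. Hence $|\mathcal{W}_T(S)| = 1$ means the action has a single orbit, and combined with the natural inclusion $\Pic(T) \hookrightarrow \ICM_T(S)$, this forces $\Pic(T) = \ICM_T(S)$. Conversely, if $\Pic(T) = \ICM_T(S)$ then there is only one orbit, so $|\mathcal{W}_T(S)| = 1$.

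For the second statement, I would use Proposition \ref{prop:Bass}, which says that $S$ is Bass if and only if every overorder of $S$ is Gorenstein. Combined with the first statement just proved, this is equivalent to the condition that $\ICM_T(S) = \Pic(T)$ for every overorder $T$ of $S$. Using the canonical partition $\ICM(S) = \bigsqcup_T \ICM_T(S)$ from the text, this gives exactly the identification $\ICM(S) = \bigsqcup_T \Pic(T)$. For the reverse implication, given such a decomposition, one compares term by term with the canonical partition of $\ICM(S)$, using that $\Pic(T) \subseteq \ICM_T(S)$ for each $T$ in order to deduce the equality $\Pic(T) = \ICM_T(S)$ for every overorder, and hence that each overorder is Gorenstein. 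I do not anticipate any serious obstacle; the only minor point requiring care is being explicit about the natural identification of $\Pic(T)$ as a subset of $\ICM_T(S)$, which is precisely what Lemma \ref{lemma:invmultring} provides.
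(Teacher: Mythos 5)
Your proposal is correct and matches the paper's intent: the corollary is stated as an immediate consequence of Propositions \ref{prop:Gorenstein}, \ref{prop:Bass} and \ref{prop:computeICM} (the paper gives no further argument), and your argument assembles exactly these ingredients, including the key identification $\Pic(T)\subseteq\ICM_T(S)$ via Lemma \ref{lemma:invmultring} and the torsor structure from Proposition \ref{prop:computeICM}.\ref{prop:computeICM:isoclasses}. The only cosmetic remark is that Proposition \ref{prop:Bass} is stated for non-maximal $S$, so the maximal case should be noted as trivial (the only overorder is $\cO_K$, which is Gorenstein, and $\ICM(\cO_K)=\Pic(\cO_K)$), but this does not affect the substance of your proof.
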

In the next Section we prove an analogous result for orders of type~$2$.
More precisely, in Corollary \ref{cor:idlclassestype2} we will show that also in this case we completely understand $\ICM_T(S)$ for an overorder of $S$ of $\type{T}=2$.

\section{Orders with CM type 2}
\label{sec:dim2}
Let $Z$ be a Dedekind domain with field of fractions $Q$ and let $K$ be a non-zero $Q$-\'etale algebra.
Let $S$ be an order in $K$.
We start by recalling a lemma from linear algebra.

\begin{lemma}[{\cite[Lemma 4.9]{MarsegliaSuperMult}}]
    \label{lemma:linalg1}
    Let $U,V$ and $W$ be vector spaces over a field $k$, with $W$ of dimension $\geq 2$. Let $\vphi:U\otimes V \twoheadrightarrow W$ be a surjective linear map. Then there exists an element $u\in U$ such that $\dim_k \vphi(u\otimes V)\geq 2$, or there exists an element $v\in V$ such that $\dim_k \vphi(U\otimes v)\geq 2$.
\end{lemma}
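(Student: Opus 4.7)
The plan is to argue by contradiction. Assume that for every $u\in U$ we have $\dim_k \vphi(u\otimes V)\leq 1$ and for every $v\in V$ we have $\dim_k \vphi(U\otimes v)\leq 1$, and derive a contradiction with the hypothesis $\dim_k W \geq 2$. To organize the argument, I will introduce the two families of linear maps
\[ f_u : V \to W, \ v \mapsto \vphi(u\otimes v), \qquad g_v : U \to W, \ u \mapsto \vphi(u\otimes v). \]
By assumption $\dim_k \mathrm{Im}(f_u)\leq 1$ for each $u$, so for each $u$ the image $L_u=\mathrm{Im}(f_u)$ is either zero or a line in $W$. Analogously each $\mathrm{Im}(g_v)$ has dimension at most one.

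The first step is to locate two elements $u_1,u_2\in U$ whose associated lines $L_{u_1},L_{u_2}$ are distinct and both nonzero. Indeed, if every $L_u$ were contained in a single fixed line $L\subseteq W$, then the image of $\vphi$, which is spanned by the elements $\vphi(u\otimes v)=f_u(v)\in L_u\subseteq L$, would be contained in $L$. This would force $\dim_k W \leq 1$, contradicting the hypothesis. So such $u_1,u_2$ exist.

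The second step is to find $v\in V$ that is in neither $\ker(f_{u_1})$ nor $\ker(f_{u_2})$. Since $L_{u_1}$ and $L_{u_2}$ are nonzero lines, the kernels $\ker(f_{u_i})$ are proper subspaces of $V$. Now I use the elementary fact that a vector space over any field (even finite) cannot be the union of two proper subspaces: if $V=W_1\cup W_2$ with $W_1,W_2\subsetneq V$, pick $x\in W_1\setminus W_2$ and $y\in W_2\setminus W_1$; then $x+y$ lies in neither $W_1$ nor $W_2$, a contradiction. Applying this to $W_i=\ker(f_{u_i})$ yields the desired $v$.

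The final step is the contradiction: for this $v$, the vectors $g_v(u_1)=f_{u_1}(v)$ and $g_v(u_2)=f_{u_2}(v)$ are nonzero elements of $L_{u_1}$ and $L_{u_2}$ respectively, hence linearly independent (as they lie on distinct lines through the origin). But both are in $\mathrm{Im}(g_v)$, so $\dim_k \mathrm{Im}(g_v)\geq 2$, contradicting our assumption. The only subtle point in the argument is the second step, namely the choice of $v$ avoiding both kernels; the lemma would be false with only one line $L_u$ dominating, so this uniform avoidance is really where the hypothesis $\dim_k W\geq 2$ is used, and it is important to note that no assumption on the cardinality of $k$ is required.
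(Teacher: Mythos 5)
Your proof is correct, and it is worth noting that the paper itself does not reprove this lemma: it is imported verbatim from \cite[Lemma 4.9]{MarsegliaSuperMult}, so there is no in-text argument to compare against. Your contradiction argument is sound and self-contained: if every $\vphi(u\otimes V)$ has dimension at most $1$, then either all the lines $L_u=\vphi(u\otimes V)$ lie in one fixed line, which forces $\operatorname{im}\vphi$ (spanned by the elementary tensors' images) into that line and contradicts $\dim_k W\geq 2$ together with surjectivity, or there are $u_1,u_2$ with distinct nonzero lines $L_{u_1}\neq L_{u_2}$; the kernels $\ker f_{u_1},\ker f_{u_2}$ are then proper subspaces of $V$, and your observation that a vector space over an arbitrary field is never the union of two proper subspaces (the $x+y$ trick, valid even over $\mathbb{F}_2$) produces a $v$ with $f_{u_1}(v)$ and $f_{u_2}(v)$ both nonzero; these lie on distinct lines, hence are linearly independent, giving $\dim_k\vphi(U\otimes v)\geq 2$ and the desired contradiction. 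Your closing remark that no hypothesis on the cardinality of $k$ is needed is exactly the right point to flag, since the avoidance step is the only place where one might be tempted to invoke an infinite field; the two-subspace version of the avoidance lemma makes the statement uniform in $k$, which matters in the paper's application where $k=S/\p$ is typically a finite residue field.
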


In the next Theorem, given an order $S$ with local type $2$, we give a complete classification of the local isomorphism classes of fractional $S$-ideals with multiplicator ring $S$.
\begin{thm}\label{thm:type2}
    Let $S$ be an order, $\p$ a prime of $S$ and $I$ a fractional $S$-ideal with $(I:I)=S$.
    Assume that
    $\typep{S}=2$.
    Then either~$I_\p \simeq S_\p$ or~$I_\p \simeq (S^t)_\p$.
\end{thm}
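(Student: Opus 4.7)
The plan is to localize the problem and exploit the identity $MN = (S^t)_\p$, where I set $R = S_\p$, $\frm = \p R$, $k = R/\frm$, $M = I_\p$, and $N = (I^t)_\p$. The hypothesis $(I:I) = S$ localizes via Lemma~\ref{lemma:loc_idl_operations} to $(M:M) = R$, and Lemma~\ref{lemma:frac_idls}.\ref{lemma:frac_idls:colontrace} combined with localization yields
\[ MN = ((I:I)^t)_\p = (S^t)_\p. \]
By Proposition~\ref{prop:can_mod_and_type}, the assumption $\typep{S} = 2$ translates into $\dim_k W = 2$, where $W = (S^t)_\p / \frm (S^t)_\p$.

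The next step is to reduce modulo $\frm$: the surjection $M \otimes_R N \twoheadrightarrow (S^t)_\p$ induces a surjection of $k$-vector spaces $\vphi: \bar M \otimes_k \bar N \twoheadrightarrow W$. Since $\dim_k W = 2$, Lemma~\ref{lemma:linalg1} produces an element whose image in $\bar M$ (respectively $\bar N$) hits all of $W$ when multiplied by $\bar N$ (respectively $\bar M$). Lifting to $M$ or $N$ and applying Nakayama's lemma then yields either $u \in M$ with $uN = (S^t)_\p$, or $v \in N$ with $Mv = (S^t)_\p$.

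The main obstacle will be to convert these equalities into actual $R$-module isomorphisms, since a priori $u$ or $v$ could be a zero divisor. The resolution is that $(S^t)_\p$, being a fractional $R$-ideal, contains a non-zero divisor of $\cQ(R)$ (for instance the image of a non-zero element of $Z$), and hence projects non-trivially to every factor in the decomposition of $\cQ(R)$ as a finite product of fields. Consequently, in the first case the equality $uN = (S^t)_\p$ forces $u$ to be a unit of $\cQ(R)$, so $N = u^{-1}(S^t)_\p$. Substituting into $MN = (S^t)_\p$ gives $u^{-1} M \subseteq ((S^t)_\p : (S^t)_\p) = R$, and combined with $uR \subseteq M$ this yields $M = uR \simeq S_\p$. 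The second case is symmetric: $v$ is a unit of $\cQ(R)$ and $M = v^{-1}(S^t)_\p \simeq (S^t)_\p$.
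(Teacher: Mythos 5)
Your proof is correct, and its core --- the surjection $I\otimes_S I^t \twoheadrightarrow II^t = S^t$, reduction modulo $\p$, Lemma \ref{lemma:linalg1}, and Nakayama --- is exactly the paper's argument. Where you diverge is the endgame: the paper passes to the completion $\hat S_\p$, where trace duals are still available (Lemma \ref{lemma:completion_trace}), reads $u\hat I^t_\p=\hat S^t_\p$ as $\hat I_\p = u\hat S_\p$ by duality, and then descends to the localization by a cited result of Eisenbud; you instead stay in $S_\p$ and never dualize locally. You extract invertibility of $u$ (resp.\ $v$) in $\cQ(S_\p)$ from the fact that $(S^t)_\p$ contains a unit of $\cQ(S_\p)$ (the image of a non-zero element of $Z$), and then convert $u(I^t)_\p=(S^t)_\p$ into $I_\p=uS_\p$ using only the localized product relation $I_\p (I^t)_\p=(S^t)_\p$ together with $\bigl((S^t)_\p:(S^t)_\p\bigr)=S_\p$; the latter rests on $(S^t:S^t)=S$ --- if $xS^t\subseteq S^t$ then $\Tr(xS^t)\subseteq Z$, so $x\in (S^t)^t=S$ --- combined with Lemma \ref{lemma:loc_idl_operations}, and you should record this one-line verification since the paper never states it. Your route buys a completion-free, self-contained finish that sidesteps the issue of trace duals not localizing well; the paper's route is marginally shorter once the completion machinery of Section 2 is in place, since duality there immediately identifies $\hat I_\p$ as $u\hat S_\p$ or $\hat S^t_\p$.
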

\begin{proof}
    Put $k=S/\p$ and consider the following finite dimensional $k$-vector spaces:
    \[ U = \frac{I}{\p I} ,\quad V = \frac{I^t}{\p I^t} ,\quad W = \frac{S^t}{\p S^t}. \]
    By Lemma \ref{lemma:frac_idls}.\ref{lemma:frac_idls:colontrace} we have that $S^t=(I:I)^t=II^t$.
    It follows that the multiplication map $I \otimes_S I^t \to S^t$ is a surjective $S$-linear morphism.
    In particular the induced $k$-linear morphism
    \[ \pi : U \otimes_k V \to W \]
    is a surjective map onto a $2$-dimensional vector space.
    By Lemma \ref{lemma:linalg1} there exists
    \begin{enumerate}[(i)]
        \item \label{thm_surj:1} $u\in I$ such that $\pi( u+\p I \otimes V ) = W$, or
        \item \label{thm_surj:2} $v\in I^t$ such that $\pi( U \otimes v+\p I^t ) = W$.
    \end{enumerate}
    Assume that \ref{thm_surj:1} holds, that is,
    \[ \pi( u+\p I \otimes V )=\frac{uI^t+\p S^t}{\p S^t} = W. \]
    If we write $I^t = \sum_{k=1}^r \beta_k S$ then $\set{ u\beta_k + \p S^t }$ is a finite set of generators over~$k$ for $W$.
    By Nakayama's Lemma
    we obtain that $\set{ u\beta_k }$ gives a set of generators over $\hat S_\p$ for $\hat S^t_\p$.
    By identifying $u$ with its image in $\cQ(\hat S_\p)$ we see that
    $u\hat I^t_\p = \hat S^t_\p$, which is equivalent to $\hat I_\p = u\hat S_\p$.
    By \cite[Ex.~7.5, p.~203]{Eisenbud95} we get that $I_\p\simeq S_\p$ as an $S_\p$-module.
    If \ref{thm_surj:2} holds, then by a similar argument we get that
    $v\hat I_\p = \hat S^t_\p$ and hence $I_\p \simeq (S^t)_\p$.
\end{proof}

\begin{cor}
    Let $S$ be a non-Gorenstein order.
    Then the following are equivalent:
    \begin{itemize}
        \item $\type{S}=2$.
        \item $\max\set{\gensover{S}{I} :\text{ fractional $S$-ideal $I$ with $S=(I:I)$}} = 2$
    \end{itemize}
\end{cor}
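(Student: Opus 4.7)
The overall strategy is to reduce both sides of the equivalence to a single computation of $\gensover{S}{S^t}$, using the trace dual as a canonical witness. First I would dispose of the preliminary facts: the identity $(S^t:S^t)=S$, which follows from the general fact that $(L:L)=(L^t:L^t)$ for every lattice $L$ (direct verification from the definition of the trace dual together with $(L^t)^t=L$); and, since $S$ is non-Gorenstein, that $\type{S}\geq 2$ and $S^t$ is not principal (both by Proposition \ref{prop:Gorenstein}, the second because principal implies invertible). Combining these with Lemma \ref{lemma:gensI} immediately yields
\[\gensover{S}{S^t}=\max\set{2,\max_\p\dim_{S/\p}(S^t/\p S^t)}=\max\set{2,\type{S}}=\type{S},\]
so $S^t$ is a fractional $S$-ideal with multiplicator ring $S$ whose minimal number of generators equals $\type{S}$.

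For the forward implication, assuming $\type{S}=2$, I would bound $\gensover{S}{I}$ for an arbitrary fractional $S$-ideal $I$ with $(I:I)=S$ prime by prime. If $\typep{S}=2$, Theorem \ref{thm:type2} gives $I_\p\simeq S_\p$ or $I_\p\simeq(S^t)_\p$, so $\dim_{S/\p}(I/\p I)$ is $1$ or $2$. If $\typep{S}=1$, then $\hat S_\p$ is a complete local order of type $1$, and the hypothesis $(I:I)=S$ passes (by flatness of localization and completion) to $(\hat I_\p:\hat I_\p)=\hat S_\p$; Proposition \ref{prop:Gorenstein} applied to $\hat S_\p$ then forces $\hat I_\p$ to be invertible, hence principal over the local ring $\hat S_\p$, so $\dim_{S/\p}(I/\p I)=1$. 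Pairing these bounds with Lemma \ref{lemma:gensI} yields $\gensover{S}{I}\leq 2$, while $\gensover{S}{S^t}=2$ shows the maximum is attained.

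The converse falls out of the preliminary calculation for free: if the maximum in the statement equals $2$, then taking $I=S^t$ forces $\type{S}=\gensover{S}{S^t}\leq 2$, and combined with $\type{S}\geq 2$ we conclude $\type{S}=2$.

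The step I expect to require the most care is the case $\typep{S}=1$ in the forward direction: one must verify that completion preserves the colon ring (so the hypothesis $(I:I)=S$ survives the passage to $\hat S_\p$) and that the ``Gorenstein implies every ideal with $(I:I)=\hat S_\p$ is principal'' direction of Proposition \ref{prop:Gorenstein} is legitimately available for the complete local order $\hat S_\p$. Everything else is a routine assembly of Theorem \ref{thm:type2}, Lemma \ref{lemma:gensI}, and the identity $(L:L)=(L^t:L^t)$.
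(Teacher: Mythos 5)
Your proof is correct and follows essentially the same route as the paper: both reduce the statement to the identity $\gensover{S}{S^t}=\type{S}$ via Lemma \ref{lemma:gensI} and Proposition \ref{prop:Gorenstein}, use $(S^t:S^t)=S$ to see the maximum is attained, and bound $\gensover{S}{I}$ for arbitrary $I$ with $(I:I)=S$ prime-by-prime through Theorem \ref{thm:type2}. The only difference is that you spell out the primes with $\typep{S}=1$ (via completion and Proposition \ref{prop:Gorenstein} applied to the $\hat Z_p$-order $\hat S_\p$), a case the paper's proof leaves implicit when it invokes Theorem \ref{thm:type2}; this is a legitimate and slightly more careful treatment of the same argument.
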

\begin{proof}
    Recall that for every prime $\p$ of $S$, we have $\gensover{S_\p}{I_\p}=\dim_{S/\p}(I/\p I)$ for every fractional $S$-ideal $I$ by Nakayama's Lemma.
    Define
    \[ m = \max\set{\gensover{S}{I} :\text{ fractional $S$-ideal $I$ with $(I:I)=S$}}. \]

    Assume that $\type{S}=2$.
    By Lemma \ref{lemma:gensI} this is equivalent to  $\gensover{S}{S^t}=2$.
    Since $(S^t:S^t)=S$ we then have $m\geq 2$.
    Now, let $I$ be any fractional $S$-ideal with $(I:I)=S$.
    By Theorem \ref{thm:type2} we have $\gensover{S_\p}{I_\p}\leq 2$, and hence by Lemma \ref{lemma:gensI}, we also have $\gensover{S}{I} \leq 2$.
    Since $I$ was arbitrary we get also $m\leq 2$.
    Therefore $m=2$, as required.

    For the converse, assume that $m=2$.
    Since $S$ is not Gorenstein, we have that $\gensover{S}{S^t}>1$ and hence $\gensover{S}{S^t}=2$.
    Hence we get that $\type{S}=2$.
\end{proof}

After a technical lemma, we conclude Section \ref{sec:dim2} with an application of the machinery built so far to the computations of isomorphism classes, followed by Example \ref{ex:appliations}.

\begin{lemma}\label{lemma:patchlocaltoglobal}
    Let $S$ be an order and let $\p_1,\ldots,\p_r$ be distinct primes of $S$.
    Let $I_1,\ldots,I_r$ be fractional $S$-ideals contained in $S$, possibly with repetitions.
    Let $m_1,\ldots,m_r$ be non-negative integers such that
    \[ \p_i^{m_i}\subseteq I_{i,\p_i} \]
    for $1\leq i\leq r$, where $I_{i,\p_i}$ denotes the localization of $I_i$ at $\p_i$.
    Put
    \[ J:=\sum_{i=1}^r \left( \left( I_i+\p_i^{m_i} \right)\prod_{j\neq i } \p_j^{m_j} \right). \]
    Then for each $1\leq k \leq r$ we have
    \[ J_{\p_k} = I_{k,\p_k}. \]
\end{lemma}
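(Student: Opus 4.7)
The strategy is a direct localization argument: apply Lemma~\ref{lemma:loc_idl_operations} to push the localization at $\p_k$ inside the sum and the products defining $J$, and then exploit the fact that distinct maximal ideals become trivial when localized at each other.

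The key observation is that for $j \neq k$ we have $\p_j \not\subseteq \p_k$ (both are maximal and distinct), so $\p_j$ contains an element that is a unit in $S_{\p_k}$; hence $(\p_j)_{\p_k} = S_{\p_k}$, and therefore also $(\p_j^{m_j})_{\p_k} = S_{\p_k}$. With this in hand, I would split the sum defining $J_{\p_k}$ into the term $i = k$ and the terms $i \neq k$.

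For the term $i=k$, every factor in $\prod_{j \neq k} \p_j^{m_j}$ localizes to $S_{\p_k}$, so by Lemma~\ref{lemma:loc_idl_operations} this summand localizes to $I_{k,\p_k} + (\p_k^{m_k})_{\p_k}$. By the hypothesis $\p_k^{m_k} \subseteq I_{k,\p_k}$, this simplifies to $I_{k,\p_k}$. For a term with $i \neq k$, the product $\prod_{j \neq i} \p_j^{m_j}$ contains the factor $\p_k^{m_k}$ (since $k \neq i$), while all the other factors localize to $S_{\p_k}$; thus this summand localizes into
\[
\bigl(I_{i,\p_k} + (\p_i^{m_i})_{\p_k}\bigr)\cdot (\p_k^{m_k})_{\p_k} \subseteq (\p_k^{m_k})_{\p_k} \subseteq I_{k,\p_k}.
\]

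Assembling these, $J_{\p_k} = I_{k,\p_k} + (\text{something contained in }I_{k,\p_k}) = I_{k,\p_k}$, which is the desired equality. The only slightly subtle step is verifying that $(\p_j)_{\p_k} = S_{\p_k}$ for $j \neq k$, which relies on Lemma~\ref{lemma:primesmax} (primes are maximal ideals); everything else is a mechanical application of Lemma~\ref{lemma:loc_idl_operations}, so I do not expect any real obstacle.
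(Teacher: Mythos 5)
Your proof is correct and follows essentially the same route as the paper's: localize the defining sum of $J$ term by term via Lemma~\ref{lemma:loc_idl_operations}, use that $(\p_j)_{\p_k}=S_{\p_k}$ for $j\neq k$ (maximality, Lemma~\ref{lemma:primesmax}), and absorb $(\p_k^{m_k})_{\p_k}$ into $I_{k,\p_k}$ using the hypothesis. The only cosmetic difference is that the paper computes the $i\neq k$ summands exactly as $S_{\p_k}\p_k^{m_k}$ rather than bounding them by containment, which changes nothing.
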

\begin{proof}
    Fix an index $k$.
    For $1\leq i\leq r$, by Lemma \ref{lemma:loc_idl_operations}, we have
    \[
    \left( I_i+\p_i^{m_i} \right)_{\p_k} =
    \begin{cases}
        I_{k,\p_k}, & \text{ if }i=k\\
        S_{\p_k}, & \text{ if }i\neq k
    \end{cases}
    \]
    and
    \[
    \left( \prod_{j\neq i } \p_j^{m_j} \right)_{\p_k} =
    \begin{cases}
        S_{\p_k}, & \text{ if }i=k\\
        \p_k^{m_k}, & \text{ if }i\neq k.
    \end{cases}
    \]
    Hence
    \begin{align*}
        J_{\p_k}
        & = \left( \left( I_k+\p_k^{m_k} \right)\prod_{j\neq k } \p_j^{m_j} \right)_{\p_k} +
            \sum_{i\neq k} \left( \left( I_i+\p_i^{m_i} \right)\prod_{j\neq i } \p_j^{m_j} \right)_{\p_k} \\
        & = I_{k,\p_k}S_{\p_k} + S_{\p_k}\p_k^{m_k}\\
        & = I_{k,\p_k} + \p_k^{m_k} \\
        & = I_{k,\p_k},
    \end{align*}
    as required.
\end{proof}

\begin{cor}\label{cor:idlclassestype2}
    Let $S\subseteq T$ be orders such that $\type{T}=2$.
    Let $N$ be the number of primes $\p$ of $T$ such that $\typep{T}=2$.
    Then $\vert\mathcal{W}_T(S)\vert = 2^N$ and $\vert\ICM_T(S)\vert = 2^N\cdot \vert \Pic(T) \vert$.
\end{cor}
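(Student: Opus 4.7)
The plan is to reduce the statement to counting weak-equivalence classes and then classify them by a prime-by-prime local analysis at the primes of $T$.

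By Proposition \ref{prop:computeICM}.\ref{prop:computeICM:isoclasses}, $\Pic(T)$ acts freely on $\ICM_T(S)$ with quotient $\Wk_T(S)$, so $\vert\ICM_T(S)\vert = \vert\Pic(T)\vert\cdot\vert\Wk_T(S)\vert$ and it suffices to show $\vert\Wk_T(S)\vert = 2^N$. For the upper bound, I would observe that a class in $\Wk_T(S)$ is determined by the tuple $([I_\p])_\p$ of local isomorphism classes at primes $\p$ of $T$: indeed, by Proposition \ref{prop:computeICM}.\ref{prop:computeICM:wkeq}, weak equivalence is characterized by $(I:J)(J:I) = T$, a containment of $T$-ideals that can be checked locally at each prime of $T$ via Lemma \ref{lemma:loc_idl_operations}, where it reduces exactly to $I_\p \simeq J_\p$ as $T_\p$-modules. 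At a prime $\p$ with $\typep{T} = 1$, the local ring $T_\p$ is Gorenstein, so Proposition \ref{prop:Gorenstein} forces $[I_\p] = [T_\p]$. At a prime $\p$ with $\typep{T} = 2$, Theorem \ref{thm:type2} provides only the options $[T_\p]$ and $[(T^t)_\p]$, and these are distinct: otherwise $(T^t)_\p$ would be principal, making $T_\p$ Gorenstein by Proposition \ref{prop:Gorenstein} and contradicting $\typep{T} = 2$. This gives at most $2^N$ classes.

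For the lower bound, I need to realize every subset $A \subseteq \{\p_1,\ldots,\p_N\}$ of the type-$2$ primes as the locus on which a chosen $T$-ideal is locally isomorphic to $T^t$. Fix $x \in K^\times$ with $xT^t \subseteq T$, which exists since $T^t$ is a fractional $T$-ideal. Applying Lemma \ref{lemma:patchlocaltoglobal} to $T$ with $I_i = xT^t$ when $\p_i \in A$ and $I_i = T$ otherwise, and choosing exponents $m_i \geq 0$ with $\p_i^{m_i} \subseteq (I_i)_{\p_i}$ via Lemma \ref{lemma:Icontprodprimes}, yields an ideal $J_A \subseteq T$ with $(J_A)_{\p_i} = (I_i)_{\p_i}$ for each $i$; the sum-product formula in Lemma \ref{lemma:patchlocaltoglobal}, together with Lemma \ref{lemma:loc_idl_operations}, also shows $(J_A)_\p = T_\p$ at all primes $\p \notin \{\p_1,\ldots,\p_N\}$. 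Since $(T^t:T^t) = T$ (a direct application of Lemma \ref{lemma:frac_idls}.\ref{lemma:frac_idls:colontrace}) and $(T:T) = T$, the local multiplicator rings of $J_A$ agree with $T_\p$ at every prime, so $(J_A:J_A) = T$. Distinct subsets $A$ then produce $T$-ideals with distinct local profiles, hence inequivalent weak-equivalence classes, giving $\vert\Wk_T(S)\vert \geq 2^N$.

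The main obstacle will be the translation between weak equivalence, which is defined via primes of $S$, and the cleaner local analysis at primes of $T$. I expect this to be handled entirely by the colon-product criterion of Proposition \ref{prop:computeICM}.\ref{prop:computeICM:wkeq} together with Lemma \ref{lemma:loc_idl_operations}, which turns the whole question into a local comparison at each prime of $T$ without having to unpack how primes of $T$ sit over primes of $S$.
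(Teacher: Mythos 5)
Your argument is correct and is essentially the paper's own proof, which simply cites Theorem \ref{thm:type2} and Lemma \ref{lemma:patchlocaltoglobal}: you use the same local classification at type-$2$ primes, the same patching construction to realize all $2^N$ local profiles, and Proposition \ref{prop:computeICM}.\ref{prop:computeICM:isoclasses} for the factor $\vert\Pic(T)\vert$. The extra details you supply (reducing weak equivalence to a local check at primes of $T$ via the colon-product criterion, and the uniqueness of the local class at type-$1$ primes via the Gorenstein property, which strictly speaking is the local/complete analogue of Proposition \ref{prop:Gorenstein}) are exactly the routine verifications the paper leaves implicit.
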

\begin{proof}
    The statement follows from Theorem \ref{thm:type2} and Lemma~\ref{lemma:patchlocaltoglobal}.
\end{proof}
\begin{example}\label{ex:appliations}
    Consider the polynomial
    \[ f=x^6 - 6x^5 + 6x^4 + 43x^3 + 96x^2 - 1536x + 4096 \]
    Consider the \'etale $\Q$-algebra $K=\Q[x]/(f)$.
    Denote by $\pi$ the class of the variable $x$ in $K$.
    Put $S=\Z[\pi]$ in $K$.
    There are $3$ non-invertible primes of $S$:
    the unique primes $\p_2,\p_3$ and $\p_5$ above $2,3$ and $5$, respectively.
    The dimension of $\cO_K/\p_i\cO_K$ over $S/\p_i$, for $i=2,3$ and $5$, are $3,3$ and $2$, respectively.
    By Theorem \ref{thm:equiv_conds}, all the overorders of $S$ are either Gorenstein or have type $2$, and there is at least one overorder with type $2$.

    One computes that $S$ has $1728$ overorders, $416$ of which are Gorenstein.
    Using Corollary \ref{cor:idlclassestype2},
    we can compute that there are $3920$ weak equivalences classes, and, after computing all the Picard group, we see that there are $17638816$ isomorphism classes.

    We now exhibit two applications of this computation.
    As described in \cite[Corollary~8.2]{MarsegliaICM}, using a generalization of the Latimer-MacDuffee Theorem \cite{LaClMD33},
    we have a bijection between the conjugacy classes of $\Z$-matrices with characteristic polynomial $f$ and $\ICM(S)$.
    Hence we deduce that there are $17638816$ conjugacy classes of $\Z$-matrices with characteristic polynomial $f$.

    The second application is about abelian varieties over finite fields.
    The polynomial $f$ above defines by Honda-Tate theory an isogeny class of ordinary abelian varieties over the finite field $\mathbb{F}_{16}$.
    See the LMFDB \cite{lmfdb} label \href{https://www.lmfdb.org/Variety/Abelian/Fq/3/16/ag_g_br}{{3.16.ag\_g\_br}}.
    Using \cite[Corollary~4.4]{MarsegliaAbVar}, we see that this isogeny class contains as many isomorphism classes of abelian varieties as the size of $\ICM(\Z[\pi,16/\pi])$, which is a subset of $\ICM(S)$.
    In particular, we check that there are $15112$ such classes.
\end{example}

\section{Comparison with other notions of close-to-being Gorenstein}\
\label{sec:comparison}
In the literature there are several conditions that a ring can satisfy that make it close-to-being Gorenstein.
In this section, when applicable, we will study them in terms of their type.
As usual, we will denote by $Q$ the field of fractions of a Dedekind domain, and by $K$ a non-zero \'etale $Q$-algebra.

\subsection{Nearly Gorenstein}
Let $R$ be a ring and $M$ an $R$-module.
The {\it trace ideal} of $M$ is defined as
\[ \tr M = \sum_{f \in \Hom_R(M,R)} f(M).\]
See for example \cite[Def.~2.1]{Dao21}.
Let $(R, \frm)$ be a Cohen–Macaulay local ring with canonical module $C$.
Then $R$ is called {\it nearly Gorenstein} if $\tr C$ contains $\frm$.
Such a notion was introduced and studied in \cite{Herzogetal19}.

If $S$ is an order in $K$ then by Lemma \ref{lemma:homcolon} we have an identification
\[ \tr S^t = \sum_{f \in (S:S^t)} f\cdot S^t = S^t(S:S^t). \]
Since we are going to work locally, in order to lighten the notation, we set $R=\hat S_\p$ and $\frm=\p \hat S_\p$, where $\p$ is a prime of $S$.
In particular, $R^t$ is the canonical module of $R$ and $\type{R}=\typep{S}$ by Proposition \ref{prop:can_mod_and_type}.
Using this notation, $R$ is nearly Gorenstein if and only if
\[ \frm \subseteq R^t(R:R^t). \]
Observe that the inclusion is strict if and only if $R^t(R:R^t)=R$, that is, $\type{R}=1$, or equivalently $R$ is Gorenstein by Proposition \ref{prop:Gorenstein}.

If $R$ is nearly Gorenstein, but not Gorenstein, then the type of $R$ is
\[ \type{R} = \dim_{R/\frm} \frac{R^t}{(R^t)^2(R:R^t)}. \]
By Lemma \ref{lemma:frac_idls}.\ref{lemma:frac_idls:colontrace}, we get the following equalities of $R/\frm$-vector spaces
\[ \frac{R^t}{(R^t)^2(R:R^t)} = \frac{R^t}{(R:R^t)^t(R:R^t)} = \frac{R^t}{T^t}, \]
where $T$ is the multiplicator ring of $(R:R^t)$.
Similarly, by multiplying $m=R^t(R:R^t)$ by $R^t$, we get that $\frm R^t =T^t$.
Hence $(R:\frm)=T$, which by Lemma \ref{lemma:invmultring} leads to $T=(\frm:\frm)$.
We summarize these observations in the following proposition.
\begin{prop}
    If $R$ is nearly Gorenstein, but not Gorenstein, then
    \[ \type{R} = \dim_{R/\frm} \frac{R^t}{(R^t)^2(R:R^t)}. \]
    Moreover, the multiplicator ring $(\frm:\frm)$ of $\frm$ equals the multiplicator ring of~$(R:R^t)$.
\end{prop}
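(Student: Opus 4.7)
The plan is to unpack the equality $\frm = R^t(R:R^t)$, which we first need to establish from the hypotheses. Since $R^t(R:R^t) \subseteq R$ always holds (by definition of the colon), and $\frm \subseteq R^t(R:R^t)$ by the nearly-Gorenstein hypothesis, maximality of $\frm$ forces $R^t(R:R^t)$ to equal either $\frm$ or $R$. If it equals $R$, then $R^t$ is invertible and Proposition \ref{prop:Gorenstein} makes $R$ Gorenstein, contradicting our assumption. So $\frm = R^t(R:R^t)$. Multiplying both sides by $R^t$ gives $\frm R^t = (R^t)^2(R:R^t)$, and combining this with Proposition \ref{prop:can_mod_and_type}, which provides $\type{R} = \dim_{R/\frm} R^t/\frm R^t$, yields the first formula in the statement.

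For the second assertion, I would apply Lemma \ref{lemma:frac_idls}.\ref{lemma:frac_idls:colontrace} (the identity $IJ^t = (J:I)^t$) twice to identify $(R^t)^2(R:R^t)$ with $T^t$, where $T := ((R:R^t):(R:R^t))$ is the multiplicator ring of $(R:R^t)$. Specifically, taking $(I,J) = (R^t, R)$ yields $(R^t)^2 = (R:R^t)^t$, and taking $I = J = (R:R^t)$ yields $(R:R^t)(R:R^t)^t = T^t$. Composing these gives $\frm R^t = (R^t)^2(R:R^t) = T^t$.

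It remains to relate $T$ to $(\frm:\frm)$. I would first check directly that $(R:\frm) = T$: an element $x \in K$ satisfies $x\frm \subseteq R$ if and only if $x(R:R^t)R^t \subseteq R$ (using $\frm = R^t(R:R^t)$), which by the definition of $(R:R^t)$ is equivalent to $x(R:R^t) \subseteq (R:R^t)$, i.e., $x \in T$. Finally, $\frm$ is not invertible: otherwise, Lemma \ref{lemma:loc_inv_princ} would make it principal in the local ring $R$, turning $R$ into a DVR and hence Gorenstein, contrary to the assumption. Lemma \ref{lemma:multringp} then gives $(\frm:\frm) = (R:\frm) = T$, completing the argument. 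The only delicate point in this plan is the dual manipulation in the middle paragraph, but it reduces to two straightforward applications of $IJ^t = (J:I)^t$.
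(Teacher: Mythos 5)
Your proposal is correct and follows essentially the same route as the paper: derive $\frm = R^t(R:R^t)$ from maximality of $\frm$ and non-Gorensteinness, multiply by $R^t$, apply Lemma \ref{lemma:frac_idls}.\ref{lemma:frac_idls:colontrace} twice to identify $(R^t)^2(R:R^t)$ with $T^t$, and conclude $(\frm:\frm)=(R:\frm)=T$. The only differences are cosmetic: you verify $(R:\frm)=T$ by a direct element computation instead of dualizing $\frm R^t=T^t$, and you justify the last identification by noting $\frm$ is not invertible (which could also be done purely with Proposition \ref{prop:type_at_inv_p} rather than the DVR argument) and invoking Lemma \ref{lemma:multringp}, which is in fact a more explicit justification than the paper's.
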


\subsection{Almost Gorenstein and Weakly almost Gorenstein}
Let $R$ be a local Cohen-Macaulay ring $R$ of Krull dimension $1$ with normalization $\bar{R}$ and conductor $\frf=(R:\bar{R})$.
We say that $R$ is {\it almost Gorenstein} if
\[ \ell_R\left( \frac{\bar R}{R}\right) = \ell_R\left( \frac{R}{\frf}\right) +\type{R} -1, \]
where $\ell_R(-)$ is the length as an $R$-module.
The study of almost Gorenstein rings, where this definition was given, started with \cite{BarucciFroberg97}.
It was extended in \cite{Goteteal13} and \cite{Gotoetal15}.
It is known that almost Gorenstein rings are nearly Gorenstein, see \cite[Prop.~6.1]{Herzogetal19}.

Let $S$ be an order in $K$ and $\p$ a prime of $S$.
We work locally, and again we simplify the notation by
setting $R=\hat S_\p$, $\frm=\p \hat S_\p$, $\bar{R} = \hat \cO_{K,\p}$ and $\frf=(R:\bar R)$.

We can rephrase the definition of the type of $R$ using the length as
\[ \type{R} = \ell_R\left( \frac{R^t}{\frm R^t} \right) = \ell_R\left( \frac{(R:\frm)}{R} \right), \]
where the second equality follow from Lemma \ref{lemma:frac_idls}.\ref{lemma:frac_idls:colontrace} and \ref{lemma:frac_idls:pontryagin}.
Using that the length is an additive function on short exact sequences,
we get the following proposition.
\begin{prop}
    The order $R$ is almost Gorenstein if and only if
    \[ \ell_R\left( \frac{\bar R}{R} \right) = \ell_R\left( \frac{ (R:\frm) }{\frf} \right) -1. \]
\end{prop}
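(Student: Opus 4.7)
The plan is to combine the definition of almost Gorenstein with the length reformulation of $\type{R}$ stated immediately before the proposition, and then transport the equality via a single short exact sequence.

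First I would recall the defining equality
\[ \ell_R\!\left(\tfrac{\bar R}{R}\right) = \ell_R\!\left(\tfrac{R}{\frf}\right) + \type{R} - 1, \]
and substitute the identity $\type{R} = \ell_R((R:\frm)/R)$ that was just established. The task then reduces to showing
\[ \ell_R\!\left(\tfrac{R}{\frf}\right) + \ell_R\!\left(\tfrac{(R:\frm)}{R}\right) = \ell_R\!\left(\tfrac{(R:\frm)}{\frf}\right). \]

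Next I would justify this by the chain of inclusions $\frf \subseteq R \subseteq (R:\frm)$. The first inclusion holds because $\frf$ is an ideal of $R$, and the second because $\frm \cdot R \subseteq R$. This chain yields the short exact sequence of $R$-modules
\[ 0 \longrightarrow \tfrac{R}{\frf} \longrightarrow \tfrac{(R:\frm)}{\frf} \longrightarrow \tfrac{(R:\frm)}{R} \longrightarrow 0, \]
all of whose terms have finite length (the outer two are quotients whose finite length is already implicit in the statement, and therefore the middle term has finite length by the additivity of length on short exact sequences). Applying $\ell_R(-)$ gives the displayed equality.

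Combining the two steps, the original condition becomes
\[ \ell_R\!\left(\tfrac{\bar R}{R}\right) = \ell_R\!\left(\tfrac{(R:\frm)}{\frf}\right) - 1, \]
which is what the proposition asserts. There is no real obstacle: the argument is essentially a bookkeeping step that repackages Proposition~\ref{prop:cmtype_multring}-style data (the length interpretation of $\type{R}$) together with additivity of length on the exact sequence above. The only mild subtlety worth mentioning explicitly in the write-up is that the two reformulations of $\type{R}$ used here, namely $\ell_R(R^t/\frm R^t)$ and $\ell_R((R:\frm)/R)$, agree by Lemma~\ref{lemma:frac_idls}\ref{lemma:frac_idls:colontrace}-\ref{lemma:frac_idls:pontryagin}, as already indicated in the paragraph preceding the proposition.
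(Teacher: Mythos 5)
Your proposal is correct and follows the same route as the paper, which (immediately before the proposition) substitutes $\type{R}=\ell_R\bigl((R:\frm)/R\bigr)$ into the defining equality of almost Gorenstein and invokes additivity of length on the chain $\frf\subseteq R\subseteq(R:\frm)$. Your explicit short exact sequence is exactly the bookkeeping the paper leaves implicit.
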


Related to the definition of almost Gorenstein, there is the notion of {\it weakly almost Gorenstein rings } introduced in \cite{Daoetal21}.
In Krull dimension one, the two notions coincide, see \cite[Thm.~1.3]{Daoetal21}.

\bibliographystyle{amsalpha}

\begin{thebibliography}{{LMF}22}

\bibitem[Bas63]{Bass63}
Hyman Bass, \emph{On the ubiquity of {G}orenstein rings}, Math. Z. \textbf{82}
  (1963), 8--28. \MR{0153708 (27 \#3669)}

\bibitem[BCP97]{Magma}
Wieb Bosma, John Cannon, and Catherine Playoust, \emph{The {M}agma algebra
  system. {I}. {T}he user language}, J. Symbolic Comput. \textbf{24} (1997),
  no.~3-4, 235--265. \MR{MR1484478}

\bibitem[BF65]{BF65rus}
Z.~I. Borevi\v{c} and D.~K. Faddeev, \emph{Representations of orders with
  cyclic index}, Trudy Mat. Inst. Steklov \textbf{80} (1965), 51--65.
  \MR{0205980}

\bibitem[BF97]{BarucciFroberg97}
Valentina Barucci and Ralf Fr\"{o}berg, \emph{One-dimensional almost
  {G}orenstein rings}, J. Algebra \textbf{188} (1997), no.~2, 418--442.
  \MR{1435367}

\bibitem[BH93]{BH93}
Winfried Bruns and J\"{u}rgen Herzog, \emph{Cohen-{M}acaulay rings}, Cambridge
  Studies in Advanced Mathematics, vol.~39, Cambridge University Press,
  Cambridge, 1993. \MR{1251956}

\bibitem[BL94]{BuchmannLenstra94}
J.~Buchmann and H.~W. Lenstra, Jr., \emph{Approximating rings of integers in
  number fields}, J. Th\'eor. Nombres Bordeaux \textbf{6} (1994), no.~2,
  221--260. \MR{1360644 (96m:11092)}

\bibitem[Bou89]{BourbakiCommAlgCh17eng}
Nicolas Bourbaki, \emph{Commutative algebra. {C}hapters 1--7}, Elements of
  Mathematics (Berlin), Springer-Verlag, Berlin, 1989, Translated from the
  French, Reprint of the 1972 edition. \MR{979760}

\bibitem[DKT21a]{Dao21}
Hailong Dao, Toshinori Kobayashi, and Ryo Takahashi, \emph{Trace ideals of
  canonical modules, annihilators of {E}xt modules, and classes of rings close
  to being {G}orenstein}, J. Pure Appl. Algebra \textbf{225} (2021), no.~9,
  Paper No. 106655, 18. \MR{4195890}

\bibitem[DKT21b]{Daoetal21}
\bysame, \emph{Trace ideals of canonical modules, annihilators of {E}xt
  modules, and classes of rings close to being {G}orenstein}, J. Pure Appl.
  Algebra \textbf{225} (2021), no.~9, Paper No. 106655, 18. \MR{4195890}

\bibitem[DTZ62]{DadeTausskyZas}
E.~C. Dade, O.~Taussky, and H.~Zassenhaus, \emph{On the theory of orders, in
  particular on the semigroup of ideal classes and genera of an order in an
  algebraic number field}, Math. Ann. \textbf{148} (1962), 31--64. \MR{140544}

\bibitem[Eis95]{Eisenbud95}
David Eisenbud, \emph{Commutative algebra}, Graduate Texts in Mathematics, vol.
  150, Springer-Verlag, New York, 1995, With a view toward algebraic geometry.
  \MR{1322960}

\bibitem[FHS19]{FHS19}
Claus Fieker, Tommy Hofmann, and Carlo Sircana, \emph{On the construction of
  class fields}, Proceedings of the {T}hirteenth {A}lgorithmic {N}umber
  {T}heory {S}ymposium, Open Book Ser., vol.~2, Math. Sci. Publ., Berkeley, CA,
  2019, pp.~239--255. \MR{3952015}

\bibitem[GMP13]{Goteteal13}
Shiro Goto, Naoyuki Matsuoka, and Tran~Thi Phuong, \emph{Almost {G}orenstein
  rings}, J. Algebra \textbf{379} (2013), 355--381. \MR{3019262}

\bibitem[Gre82]{Greither82}
Cornelius Greither, \emph{On the two generator problem for the ideals of a
  one-dimensional ring}, J. Pure Appl. Algebra \textbf{24} (1982), no.~3,
  265--276. \MR{656848}

\bibitem[GTT15]{Gotoetal15}
Shiro Goto, Ryo Takahashi, and Naoki Taniguchi, \emph{Almost {G}orenstein
  rings---towards a theory of higher dimension}, J. Pure Appl. Algebra
  \textbf{219} (2015), no.~7, 2666--2712. \MR{3313502}

\bibitem[HHS19]{Herzogetal19}
J\"{u}rgen Herzog, Takayuki Hibi, and Dumitru~I. Stamate, \emph{The trace of
  the canonical module}, Israel J. Math. \textbf{233} (2019), no.~1, 133--165.
  \MR{4013970}

\bibitem[HS20]{HofmannSircana20}
Tommy Hofmann and Carlo Sircana, \emph{On the computation of overorders}, Int.
  J. Number Theory \textbf{16} (2020), no.~4, 857--879. \MR{4093387}

\bibitem[JT15]{JensenThorup15}
Christian~U. Jensen and Anders Thorup, \emph{Gorenstein orders}, J. Pure Appl.
  Algebra \textbf{219} (2015), no.~3, 551--562. \MR{3279374}

\bibitem[KP05]{klupau05}
J{\"u}rgen Kl{\"u}ners and Sebastian Pauli, \emph{Computing residue class rings
  and {P}icard groups of orders}, J. Algebra \textbf{292} (2005), no.~1,
  47--64. \MR{2166795 (2006f:11142)}

\bibitem[Lam99]{Lam99}
T.~Y. Lam, \emph{Lectures on modules and rings}, Graduate Texts in Mathematics,
  vol. 189, Springer-Verlag, New York, 1999. \MR{1653294}

\bibitem[LM33]{LaClMD33}
Claiborne~G. Latimer and C.~C. MacDuffee, \emph{A correspondence between
  classes of ideals and classes of matrices}, Ann. of Math. (2) \textbf{34}
  (1933), no.~2, 313--316. \MR{1503108}

\bibitem[{LMF}22]{lmfdb}
The {LMFDB Collaboration}, \emph{The {L}-functions and modular forms database},
  \url{http://www.lmfdb.org}, 2022, [Online; accessed 18 March 2022].

\bibitem[LW85]{LevyWiegand85}
Lawrence~S. Levy and Roger Wiegand, \emph{Dedekind-like behavior of rings with
  {$2$}-generated ideals}, J. Pure Appl. Algebra \textbf{37} (1985), no.~1,
  41--58. \MR{794792}

\bibitem[Mar20a]{MarsegliaICM}
Stefano Marseglia, \emph{Computing the ideal class monoid of an order}, J.
  Lond. Math. Soc. (2) \textbf{101} (2020), no.~3, 984--1007. \MR{4111932}

\bibitem[Mar20b]{MarsegliaSuperMult}
\bysame, \emph{Super-multiplicativity of ideal norms in number fields}, Acta
  Arith. \textbf{193} (2020), no.~1, 75--93. \MR{4059936}

\bibitem[Mar21]{MarsegliaAbVar}
\bysame, \emph{Computing square-free polarized abelian varieties over finite
  fields}, Math. Comp. \textbf{90} (2021), no.~328, 953--971. \MR{4194169}

\bibitem[Ooi76]{Ooishi76}
Akira Ooishi, \emph{Matlis duality and the width of a module}, Hiroshima Math.
  J. \textbf{6} (1976), no.~3, 573--587. \MR{422243}

\bibitem[Rei03]{Reiner03}
Irving Reiner, \emph{Maximal orders}, London Mathematical Society Monographs.
  New Series, vol.~28, The Clarendon Press, Oxford University Press, Oxford,
  2003. \MR{1972204}

\bibitem[Wie84]{Wieg84}
Roger Wiegand, \emph{Cancellation over commutative rings of dimension one and
  two}, J. Algebra \textbf{88} (1984), no.~2, 438--459. \MR{747527}

\end{thebibliography}
\renewcommand{\bibname}{References} 
\providecommand{\bysame}{\leavevmode\hbox to3em{\hrulefill}\thinspace}
\providecommand{\MR}{\relax\ifhmode\unskip\space\fi MR }
\providecommand{\MRhref}[2]{%
  \href{http://www.ams.org/mathscinet-getitem?mr=#1}{#2}
}
\providecommand{\href}[2]{#2}

\end{document}